\begin{document}

\begin{frontmatter}

\title{Normal convergence of non-localised geometric functionals and shot noise excursions }
\runtitle{Non-localised geometric functionals}

\begin{aug}
  \author{\fnms{Rapha\"el  }  \snm{Lachi\`eze-Rey}  \ead[label=e1 ]{raphael.lachieze-rey@parisdescartes.fr}},

  \runauthor{R. Lachi\`eze-Rey}

  \affiliation{Universit\'e Paris Descartes, Sorbonne Paris Cit\'e}

  \address[a]{Universit\'e Paris Descartes, MAP5, 45 Rue des Saints-P\`eres, 75006 Paris, \printead{e1}}

\end{aug}
\begin{abstract}
This article presents a complete second order theory for a large class of geometric functionals on homogeneous Poisson input.  In particular, the results don't require the existence of a radius of stabilisation. Hence they can be applied to geometric functionals of spatial shot-noise fields excursions such as volume, perimeter, or Euler characteristic (the method still applies to stabilising functionals). More generally, it must be checked that a local contribution to the functional is not strongly affected  under a perturbation of the input far away. In this case the exact asymptotic variance is given, as well as the likely optimal speed of convergence in the central limit theorem.
This goes through a general mixing-type condition that adapts nicely to both proving asymptotic normality and   that variance is of volume order.

\end{abstract}

\begin{keyword}
\kwd{Poisson functionals}\kwd{ Shot noise fields}\kwd{  random excursions}\kwd{ central limit theorem}\kwd{ stabilisation}\kwd{  Berry-Esseen bounds
} 
\end{keyword}
\begin{keyword}[class=MSC]
\kwd[Primary ]{60D05,60G60,60F05}
\end{keyword}

\end{frontmatter}

%
%% ligne horizontale sur toute la page. Usage : \ligne{Largeur}
%\newcommand{\ligne}[1]{\rule[0.5ex]{\textwidth}{#1}\\}
%
%
%\setlength{\topmargin}{-1.5cm} 
%\setlength{\textwidth}{145mm} 
%\setlength{\oddsidemargin}{10mm}
%%\setlength{\bottommargin}{0mm}
% \setlength{\textheight}{225mm}

\numberwithin{equation}{section}
\theoremstyle{definition}

\newtheorem{theorem}{Theorem}[section]

\newtheorem{claim}{Claim}[section]

\newtheorem{proposition}{Proposition}[section]

\newtheorem{remark}{Remarks}[section]
\newtheorem{corollary}{Corollary}[section]

\newtheorem{lemma}{Lemma}[section] 
\newtheorem{example}{Example}[section] 
\newtheorem{assumption}{Assumption}[section]

\newcommand{\EC}{Euler characteristic }

\newcommand{\partialZ}{\partial_{\mathbb{Z} ^{d}}}
\newcommand{\Var}{\text{\rm{Var}}}
\newcommand{\cov}{\text{\rm{Cov}}}
\newcommand{\diam}{\text{\rm{diam}}}
\newcommand{\K}{\mathscr{K}}

\renewcommand{\div}{\text{\rm{div}}}

\newcommand{\magenta}{\color{magenta}}
\newcommand{\equlaw}{\stackrel{(d)}{=}}
\newcommand{\bft}{{\bf t}}
\newcommand{\z}{{\bf z}}
\newcommand{\y}{{\bf y}}
\newcommand{\subsetf}{{\subset^{*}} }

% Z_\varepsilon  ENERAL NOTATION

 \newcommand{\signe}{\text{\rm{sign}}}

 \renewcommand{\d}{{\rm d}}
 \newcommand{\W}{\mathscr{W}}
\renewcommand{\P}{\mathbf{P}} 
\newcommand{\E}{{\mathbf{E}}}

\newcommand{\var}{{\rm Var}}  
\newcommand{\leb}{\ell^d} 
 
\renewcommand{\S}{\mathcal{S}}
\newcommand{\Per}{{\rm Per}}

\newcommand{\Lip}{{\rm Lip}}

\newcommand{\Vol}{{\rm Vol}}
%NOTATION COVARIOGRAMS

 \newcommand{\R}{\mathcal{R}}
\newcommand{\corners}{{\rm corners}}
 
\renewcommand{\v}{\mathbf{v}}
\renewcommand{\u}{\mathbf{u}} 
 
\newcommand{\A}{\mathcal{A}}
\renewcommand{\root}[1]{\underline{#1}}
 
\newcommand{\N}{\mathscr{N}}

\newcommand{\p}{\mathsf{P}}

\newcommand{\supp}{\text{\rm{supp}}}
\newcommand{\n}{\mathbf{n}}

\newcommand{\C} {\mathcal{C} }

  %\endlocaldefs
 
%  
%\author{}
%\begin{document}
%\maketitle
%
%\begin{frontmatter}
%\title{Shot-noise  excursions and second order fluctuations of non-stabilizing Poisson functionals\thanksref{T1}}
%\runtitle{A Sample Document}
%\thankstext{T1}{Footnote to the title with the ``thankstext'' command.}
%
%\begin{aug}
%\author{\fnms{Rapha\"el} \snm{ Lachi\`eze-Rey}\thanksref{t1,t2 }\ead[label=e1]{first@somewhere.com}}, 
%\ead[label=e3]{third@somewhere.com}
% 
%
%\thankstext{t1}{Some comment}
%\thankstext{t2}{First supporter of the project}
%\thankstext{t3}{Second supporter of the project}
%\runauthor{F. Author et al.}
%
%\affiliation{Some University\thanksmark{m1} and Another University\thanksmark{m2}}
%
%\address{Address of the First and Second authors\\
%Usually a few lines long\\
%\printead{e1}\\
%\phantom{E-mail:\ }\printead*{e2}}
%
%\address{Address of the Third author\\
%Usually a few lines long\\
%Usually a few lines long\\
%\printead{e3}\\
%\printead{u1}}
%\end{aug}

%\end{frontmatter}

%\tableofcontents

\newcommand{\M}{{\bf M}}
\section{Introduction}

\newcommand{\x}{{\bf x}} 
Let $(\Omega ,\mathscr{A},\P)$ be a probability space.
Denote by $\ell^d$ the Lebesgue measure on $\mathbb{R}^{d}$.
Let $\eta $ be a homogeneous Poisson process on $\mathbb{R}^{d}$,  and $\{F_{W}(\eta );W\subset \mathbb{Z}^{d}\}$ a family of geometric functionals. We give  general conditions  under which $ F_{W}(\eta ) $ has a variance asymptotically proportional to $\sigma _{0}^{2} | W | $ for some   $\sigma _{0}>0$, and $ \Var(F_{W}(\eta ))^{-1/2}(F_{W}-\E F_{W}(\eta ))$ converges to a Gaussian variable, with a Kolmogorov distance   decaying  in $ | W | ^{-1/2}$, as $ | W |  $ goes to $\infty $.

{\bf Marked processes} The model is even richer if one marks the input points  by random independent variables, called \emph{marks}, drawn from an external  probability space $(\M,\mathscr{M}, \mu )$, the \emph{marks space}. It can be used for instance to let the shape and size of grains be random in the boolean model, or to have a random impulse function for a shot noise process. For $A\subset \mathbb{R}^{d}$, denote by  $\overline{A}=A\times \M$ the cylinder of marked points $\x=(x,m)$ with spatial coordinate $x\in A$. Endow $\overline{\mathbb{R}^{d}}$ with the product $\sigma $-algebra. The reader not familiar with such a setup can consider the case where $\M$ is a singleton, 	and all mark-related notation can be ignored (except in applications). By an abuse of notation, every spatial transformation applied to a couple $\x=(x,m)\in \overline{\mathbb{R}^{d}}$ is in fact applied to the spatial element, i.e. $\x-y=(x-y,m)$ for $y\in \mathbb{R}^{d},$ or for $A \subseteq \mathbb{R}^{d}\times \M,C\subset \mathbb{R}^{d}$, $A\cap C=\{(x,m)\in A:x\in   C\}$.
\newcommand{\F}{\mathcal F}
Denote for simplicity by $d\x=dx\mu (dm)$ the measure element on $(\overline{\mathbb{R}^{d}},\ell^d\times \mu )$. In all the paper, $\eta $ denotes a Poisson measure on $\overline{\mathbb{R}^{d}}$ with intensity measure $\ell^d\times \mu $.    We  assume that $\eta $ and all  random variables introduced in the paper live on the probability space $\Omega $, up to expanding it.
 
 {\bf Functionals} Let $\mathcal{A}$ be the class of locally finite sets of $\overline{\mathbb{R}^{d}}$ endowed with the topology  induced by the  mappings $\zeta \mapsto  | \zeta \cap A | $ for compact sets $A\subset \overline{\mathbb{R}^{d}}$, where $ | \cdot  | $ denotes the cardinality of a set. Functionals of interest are not properly defined on every $\zeta \in \mathcal{A}$, so we restrict them to some $\N_{0}\subset \mathcal{A} $ such that $\P(\eta \in \N_{0})=1$, and call $\N$ the class of configurations $\zeta \in \mathcal{A}$ such that $\zeta \subset \eta \cup \zeta '$ for some $\eta \in \N_{0}$ and  finite set $\zeta '$.    Let $\F$ be the class of real measurable functionals on $ \N$.  Let $\tilde Q_{a}=[-a/2,a/2)^{d},Q_{a}=\tilde Q_{a}\cap \mathbb{Z} ^{d},a>0$. For $W\subset  \mathbb{Z} ^{d}$ finite,
we  consider a functional of the form 
\begin{align}
\label{FW-general-form-Fk}
F_{W}(\zeta )=\sum_{k\in W}F_{k}^{W} (\zeta ),\zeta \in \N ,\text{\rm{ with }}F_{k}^{W}(\zeta )=F_{0}(\zeta \cap \tilde W-k), k\in W,\end{align}where $F_{0}\in \F$ and $\tilde W=\cup _{k\in W}(k+\tilde Q_1)$. It might also happen that all points of $\eta $ have an influence but only contributions of the functional over $\tilde W$ are considered: introduce the infinite input version
\begin{align}
\label{FW-general-form}
F_{W}'(\zeta )=\sum_{k\in W}F_{k} (\zeta ),\zeta \in \N,\text{\rm{ with }}F_{k} (\zeta )&=F_{0}(\zeta  -k),k\in \mathbb{Z} ^{d}.
\end{align} 
A score function is a bi-measurable mapping $\xi :\M\times \N\to \mathbb{R}$ such that
 \begin{align}
\label{eq:score-representation}
F_{0}^{\xi }:\zeta \mapsto \sum_{\x=(x,m)\in \zeta\cap {\tilde Q_1} }\xi (m,\zeta-x  ), 
\end{align} is well defined on $\zeta \in \N$, which yields that $F_{W}(\zeta )$ is the sum of the scores of all points falling in $\tilde W$.  Write $\xi (\zeta )$ instead of $\xi (m;\zeta )$ if no marking is involved (i.e. $\M$ is a singleton).   It is explained later why some shot noise  excursions functionals also obey  representations \eqref{FW-general-form-Fk}-\eqref{FW-general-form}.  In this paper, we identify a functional  $F:\N\to \mathbb{R}$   with the random variable that gives its value over $\eta : F=F(\eta )$, even if $F$ will be applied to modified versions of $\eta $ as well. 

{\bf Non-degeneracy of the variance} 
Define for $\zeta \subset \mathbb{R}^{d},0\leqslant a<b$,
\begin{align*}
\zeta _{a}^{b}=\zeta \cap \tilde Q_{b }\cap \tilde Q_{a}^{c},\; \zeta _{a}=\zeta \cap \tilde Q_{a}^{c},\;\zeta ^{b}=\zeta \cap \tilde Q_{b}.
\end{align*}

A condition that seems necessary for the variance to be non-degenerate is that at least on a finite input and a bounded window, the functional is not trivial: for some $\delta >\rho >0,$ $\P ( | F_{Q_{\delta  }}(\eta ^{\rho })-F_{Q_{\delta  }}(\emptyset ) | >0)>0$. We actually need that this still holds if points are added far away from $\eta ^{\rho }$:
 \begin{assumption}
\label{ass:nondeg}
There is  $\gamma >\rho >0,c>0,p>0$ such that for $\delta >\gamma $ arbitrarily large 
\begin{align*}
%p_{\gamma ,\delta }:=
\P \left(
\left|
F_{Q_{\delta   }}(\eta _{\gamma })-F_{Q_{\delta  }}(\eta ^{\rho }\cup \eta _{\gamma })
\right|\geqslant c
\right)\geqslant p.
\end{align*} \end{assumption}

{\bf Observation window}
In many works (e.g. \cite{PenYuk},\cite[Chapter 4]{KenMol}), the observation windows consist in a growing family of subsets $B_{n},n\geqslant 1$ of $\mathbb{R}^{d}$, that satisfy the Van'Hoff condition: for all $r>0$, 
\begin{align}
\label{eq:VH}
\ell^d(\partial B_{n}^{\oplus r})/\ell^d(B_{n})\to 0,
\end{align} as $n\to \infty $, where $B^{\oplus r}=\{x\in \mathbb{R}^{d}:d(x,B)\leqslant r\}$ for $B\subset \mathbb{R}^{d}$. We rather consider in this paper, like for instance in \cite{SepYuk}, a  family $\W$ of bounded  subsets of $\mathbb{Z} ^{d}$  satisfying the regularity condition  
\begin{align}
\label{eq:ball-like}
\limsup_{W\in \W}\frac{ | \partialZ   W  | }{ |   W  | }= 0,
\end{align}
where $\partial_{\mathbb{Z} ^{d}}W$ is the set of points of $W $ at distance $1$ from $W ^{c}$, and consider a point process over $\tilde W$. In the large window asymptotics, condition \eqref{eq:ball-like} imposes the same type of restrictions as  \eqref{eq:VH}, and using subsets of the integer lattice eases certain estimates and is not fundamentally different.  In the case where boundary effects occur (by observing $\eta \cap \tilde W$ instead of $\eta $),  stronger geometric conditions will be required.  To this end, let $B_{r},r>0,$ be a family of measurable subsets of $\mathbb{R}^{d}$ such that for some $0<a_{-}<a_{+}$, $B(0,a_{-}r)\subset B_{r}\subset B(0,a_{+}r)$, where $B(x,r)$ is  the Euclidean ball with center $x\in \mathbb{R}^{d}$ and radius $r>0$. Let also $B_{r}(x)=x+B_{r},x\in \mathbb{R}^{d}$. We set similarly as in \cite[Section 2]{PenYuk},  \begin{align*}
\mathcal{B}_{W }^{r}=&\{ \tilde W-k:\;k\in    W ,B_{r}^{c}\cap (\tilde W-k)\neq \emptyset   \},W\subset \mathbb{Z} ^{d}, \\
\mathcal{B}_{\W}^{r}=&\bigcup _{W\in \W}\mathcal{B}_{W}^{r}\cup \{\mathbb{R}^{d}\}.
\end{align*}

{\bf Background} The family of functionals described above is quite general and covers large classes of statistics used in many application fields, from data analysis to ecology, see \cite{KenMol} for theory, models and applications.  We study the variance, and Gaussian fluctuations, of such functionals,  under the assumption that a modification of $\eta $ far from $0$ modifies slightly $F_{0}(\eta )$ (or $\xi (0,\eta  )$). Most of the general results available require a {\it stabilization} or {\it localisation} radius :  it consists in a random variable $R>0$, with sufficiently fast decaying tail, such that any modification of $\eta $ outside $B(0,R)$ does not affect $F_{0}(\eta )$ (or $\xi (0,\eta  )$) at all.  By stationarity this behaviour is transferred to any $F_{k},k\in \mathbb{Z} ^{d}$. This property is sometimes called {quasi-locality} in statistical physics \cite{RVY}.  In the Euclidean framework, the results of the present paper do not require stabilisation, but can still be applied to geometric functionals, see Section \ref{sec:stab-NN}.

We give  general conditions under which functionals of the form \eqref{FW-general-form-Fk}-\eqref{FW-general-form} have a volume order variance  and undergo a central limit theorem, with  a Kolmogorov distance to the normal given by the inverse square root of the variance. We recall that the Kolmogorov distance between two real variables $U$ and $V$ is defined as 
\begin{align}
\label{eq:dK-def}
d_\K  (U,V)=\sup_{t\in \mathbb{R}}\left|
\P(U\leqslant t)-\P(V\leqslant t)
\right|.
\end{align} Specified to the case where functionals  are under the form \eqref{eq:score-representation} and the score function is stabilizing,  our conditions  demand that the tail of the stabilization radius $R$ decays polynomially fast, with power strictly smaller than   $-8d$, see Proposition 
\ref{thm:stab}.

{\bf Main result}  The main theoretical finding of this paper is condition \eqref{eq:ultimate-assumption-bis}, which is well suited for second order Poincar\'e inequalities in the Poisson space, i.e. bounds on the speed of convergence of a Poisson functional to the Gaussian law, and at the same time allows to prove non-degenerate asymptotic variance under  Assumption \ref{ass:nondeg}. The application to shot-noise processes in the following section illustrates the versatility of the method.
The results can be merged into the following synthetic  result, whose proof is at Section \ref{sec:proof-main}.
 For two sequences $\{a_{n};n\geqslant 1\},\{b_{n};n\geqslant 1\}$, write $a_{n}\sim b_{n}$ if $b_{n}\neq 0$ for $n$ sufficiently large and $a_{n}b_{n}^{-1}\to 1$ as $n\to \infty .$ Also, in all the paper, $\kappa $ denotes a constant that depends on $d,\alpha ,a_{+},a_{-}$,  whose value may change from line to line, and which explicit optimal value in the main result could be traced through the different parts of the proof. If it is well defined, for $F_{0}\in \F,$ let
 \begin{align}
\label{eq:sigma0}
\sigma _{0}^{2}:=\sum_{k\in \mathbb{Z} ^{d}}\cov(F_{0}(\eta ),F_{k}(\eta )).
\end{align}

 \begin{theorem}
\label{thm:ultimate}
  Let 
  $F_{0}\in \F$, $F_{W} $ be defined as in \eqref{FW-general-form-Fk}, $\W=\{W_{n};n\geqslant 1\}$  satisfying   \eqref{eq:ball-like}. Let $M_{1},M_{2}$ be independent random elements of $\M$ with law $\mu.$ Assume that for some $C_{0}>0,\alpha >2d $,   
 {   for all $r\geqslant 0, B \in \mathcal{B}_{\W}^{r},\ell^d-a.e.\; x_{1},x_{2}\in \mathbb{R}^{d},\zeta \subset  \{(x_{1},M_{1}),(x_{2},M_{2})\} $ 
 \begin{align}
\label{eq:ultimate-assumption-bis} \left(
\E 
   \left|F_{0}((\eta\cup \zeta ) \cap B_{r}\cap B )-F_{0}((\eta\cup \zeta) \cap B    )
\right| ^{4}
\right)^{1/4}
 \leqslant C_{0}(1+r)^{-\alpha }, 
\end{align}  }
and  Assumption \ref{ass:nondeg}   is satisfied.
Then $0<\sigma _{0}<\infty $, and as $n\to \infty ,$
 $$
{\var(F_{W_{n}})}\sim \sigma _{0}^{2} | W_{n} |
,\hspace{1cm}(\sigma _{0} ^{2}| W _{n}|)^{-1/2}(F_{ {W_{n}}}-\E F_{ { {W_{n}}}}) \xrightarrow[n\to \infty ]{\text{\rm{law}}}\; N$$ where $N$ is a standard Gaussian random variable. Furthermore, for $n$ sufficiently large,
\begin{align}
\label{eq:ultimate-dK}
d_\K  \left(
 {  \frac{ F _{W_{n}}-\E F _{W_{n}}}{\var(F _{W_{n}})^{1/2}}},N
\right)
 \leqslant \kappa  | W _{n}| ^{-1/2}\left(
\frac{C_{0}^{2}}{\sigma _{0}^{ 2}}+\frac{C_{0}^{3}}{\sigma _{0}^{ 3}}+ \frac{ C_{0}^{4}  }{\sigma _{0}^{ 4}} 
\right).
\end{align} 
\end{theorem}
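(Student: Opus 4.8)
The plan is to assemble three ingredients: an $L^{4}$ localisation estimate for the first and second add-one-cost operators of $F_{W_{n}}$ derived from \eqref{eq:ultimate-assumption-bis}, a covariance decay estimate identifying the limit \eqref{eq:sigma0}, and the second-order Poincaré inequality for Poisson functionals. Throughout write $D_{\x}F=F(\eta\cup\{\x\})-F(\eta)$ and $D^{2}_{\x_{1},\x_{2}}F=F(\eta\cup\{\x_{1},\x_{2}\})-F(\eta\cup\{\x_{1}\})-F(\eta\cup\{\x_{2}\})+F(\eta)$ for $\x=(y,m)\in\overline{\mathbb{R}^{d}}$. The first step is to show that \eqref{eq:ultimate-assumption-bis} forces each single-cell cost to be localised: for $\x=(y,m)$,
\[
\bigl(\E|D_{\x}F^{W}_{k}|^{4}\bigr)^{1/4}\leqslant \kappa\,C_{0}\,(1+|y-k|)^{-\alpha},
\]
and analogously $\|D^{2}_{\x_{1},\x_{2}}F^{W}_{k}\|_{4}\leqslant \kappa\,C_{0}\,(1+\max(|y_{1}-k|,|y_{2}-k|))^{-\alpha}$. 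The mechanism is a three-term split of the cost, inserting $F_{0}$ evaluated on the $B_{r}$-restriction of the configuration: taking $r$ just below $|y-k|/a_{+}$ makes the ``added point lies outside $B_{r}$'' contribution vanish identically, while the two truncation errors are controlled by \eqref{eq:ultimate-assumption-bis} applied with $B=\tilde W-k\in\mathcal{B}_{\W}^{r}$ (it is this choice of $B$, rather than $\mathbb{R}^{d}$, that absorbs the boundary effect when $k$ lies near $\partialZ W$). Summing over $k$ gives $\|D_{\x}F_{W}\|_{4}\leqslant \kappa\,C_{0}\sum_{k\in W}(1+|y-k|)^{-\alpha}$, uniformly bounded in $y$ and square-integrable over $y$ because $\alpha>d$.

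Next I would settle the variance. Finiteness of $\sigma_{0}$ follows from covariance decay: by stationarity $\cov(F_{0},F_{k})$ is estimated by localising $F_{0}\approx F_{0}(\eta\cap B_{r})$ and $F_{k}\approx F_{0}((\eta-k)\cap B_{r})$ with $r\sim|k|/(3a_{+})$, so that the two localised variables depend on $\eta$ on disjoint sets and are therefore independent, while the truncation errors are $O((1+|k|)^{-\alpha})$ in $L^{2}$ by \eqref{eq:ultimate-assumption-bis} with $B=\mathbb{R}^{d}$; hence $|\cov(F_{0},F_{k})|\leqslant \kappa\,|k|^{-\alpha}$, summable on $\mathbb{Z}^{d}$ since $\alpha>d$, so $0\leqslant\sigma_{0}^{2}<\infty$. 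Writing $\var(F_{W_{n}})=\sum_{k,l\in W_{n}}\cov(F^{W_{n}}_{k},F^{W_{n}}_{l})$, I would replace each window-restricted cell $F^{W_{n}}_{k}$ by the infinite-input $F_{k}=F_{0}(\eta-k)$, the error being $O((1+\operatorname{dist}(k,\partialZ W_{n}))^{-\alpha})$ in $L^{2}$ by \eqref{eq:ultimate-assumption-bis} with $B=\tilde W_{n}-k$. Since the cells within any fixed distance of the boundary number $o(|W_{n}|)$ by \eqref{eq:ball-like}, the boundary contribution is negligible and $\var(F_{W_{n}})/|W_{n}|\to\sum_{l\in\mathbb{Z}^{d}}\cov(F_{0},F_{l})=\sigma_{0}^{2}$.

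The main obstacle is the strict positivity $\sigma_{0}>0$, i.e.\ the matching lower bound $\var(F_{W_{n}})\geqslant \kappa\,|W_{n}|$; this is where Assumption \ref{ass:nondeg} and the absence of stabilisation really bite, since a perturbation confined to one cell influences $F_{W_{n}}$ globally and one must preclude systematic cancellation. I would tile the interior of $W_{n}$ by $\Theta(|W_{n}|)$ well-separated copies of $Q_{\gamma}$ and run a Doob (martingale) decomposition $F_{W_{n}}-\E F_{W_{n}}=\sum_{i}\Delta_{i}$ that reveals $\eta$ box by box, so that the increments are orthogonal and $\var(F_{W_{n}})=\sum_{i}\E[\Delta_{i}^{2}]$. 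For the box centred at $z$, Assumption \ref{ass:nondeg} in translated form, conditioning on the far field $\eta_{\gamma}$ and resampling the central cluster $\eta^{\rho}$, produces with probability at least $p$ a change of at least $c$ in the conditional mean of $F_{W_{n}}$; the point is that this $\Omega(1)$ central fluctuation is genuinely produced by the box, while by the $L^{4}$ decay of \eqref{eq:ultimate-assumption-bis} the aggregated long-range feedback of all other cells is only a bounded perturbation that cannot cancel it. Hence $\E[\Delta_{i}^{2}]\geqslant \kappa\,c^{2}p$ for the $\Theta(|W_{n}|)$ interior boxes, which yields the lower bound. I expect this quantitative balance between the local non-degeneracy $(c,p)$ and the tail sum of the non-local costs to be the delicate part of the whole argument.

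Finally I would feed the difference bounds into the second-order Poincaré (Malliavin--Stein) inequality for Poisson functionals of Last, Peccati and Schulte, applied to $\hat F_{n}=(F_{W_{n}}-\E F_{W_{n}})/\var(F_{W_{n}})^{1/2}$. That inequality bounds $d_{\K}(\hat F_{n},N)$ by a finite sum of multivariate integrals of products of first and second differences of $\hat F_{n}$ over two or three copies of $\overline{\mathbb{R}^{d}}$. Inserting the localisation estimates, each such integral factorises into one free translation over $W_{n}$, contributing the volume factor $|W_{n}|$, times a convergent spatial integral of the decay kernels; convergence of the integrals carrying two second-difference kernels that share a variable is precisely what forces the stronger requirement $\alpha>2d$ (whereas the variance analysis only used $\alpha>d$). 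Collecting the resulting powers of $C_{0}$ and of $\sigma=\var(F_{W_{n}})^{1/2}\sim\sigma_{0}|W_{n}|^{1/2}$ turns the three surviving scalings into $|W_{n}|^{-1/2}\,C_{0}^{j}\sigma_{0}^{-j}$ for $j=2,3,4$, which is \eqref{eq:ultimate-dK}; the central limit theorem then follows since this bound tends to $0$ as $n\to\infty$.
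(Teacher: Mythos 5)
Your overall architecture is the paper's: the three-term split of the add-one cost with $r$ chosen just below $\|y-k\|/a_{+}$ and $B=\tilde W-k$ is exactly how the paper converts \eqref{eq:ultimate-assumption-bis} into the Malliavin bounds \eqref{eq:moment-derivative-2} and \eqref{eq:bound-D2}; the covariance decay by localising $F_{0}$ and $F_{k}$ on disjoint balls is Theorem \ref{thm-variance}; and the final step is indeed the Last--Peccati--Schulte second-order Poincar\'e inequality. (One omission in the last step: the Kolmogorov bound's $C_{0}^{4}/\sigma_{0}^{4}$ term comes from the $\gamma_{4}$ term of that inequality, which requires a separate centred fourth-moment estimate $\E(F_{W}-\E F_{W})^{4}\leqslant \kappa\,|W|^{2}$, i.e.\ \eqref{eq:moments-upper-bound}; your ``products of first and second differences'' description does not cover it, though the paper supplies it from the same hypothesis with $\alpha>2d$.)

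The genuine gap is in the variance lower bound, precisely the step you flag as delicate. Two problems. First, your key claim --- that the $\Omega(1)$ fluctuation produced inside a box ``cannot be cancelled'' by the aggregated long-range feedback because the latter is \emph{bounded} --- is not a proof: a bounded perturbation of size comparable to $c$ cancels a fluctuation of size $c$ perfectly well. What is needed is \emph{smallness}, and this is only obtained by separating scales: the paper tiles $W$ by boxes of side $\delta$ with a buffer of width $\delta-\gamma$, bounds the conditional cross-covariances between distinct boxes by $\kappa C_{0}^{2}\delta^{2d}(\delta-\gamma)^{-\alpha}$ (this is where $\alpha>2d$ enters the variance analysis too, contrary to your remark that only $\alpha>d$ is used there), and then chooses $\delta$ so large that this is below half the per-box variance. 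Second, the Doob decomposition does not interface cleanly with Assumption \ref{ass:nondeg}: the increment $\Delta_{i}$ is a difference of conditional expectations that integrate out all unrevealed boxes, whereas Assumption \ref{ass:nondeg} controls the fluctuation of the \emph{functional itself} with the far field $\eta_{\gamma}$ present. Conditional expectation is an $L^{2}$ contraction, so the variance of $\E[F\,|\,\mathcal{F}_{i}]$ as the $i$-th box varies can only be smaller than that of $F$; a lower bound on $\E[\Delta_{i}^{2}]$ therefore does not follow from the assumption without an additional argument. The paper circumvents this by \emph{conditioning on} (rather than integrating out) the far field $\eta_{\gamma}^{*}$, using $\var(U)=\inf_{z}\E(U-z)^{2}$ and restricting to the event $\{\eta_{\rho}^{\gamma}=\emptyset\}$ to land exactly on the quantity controlled by Assumption \ref{ass:nondeg}. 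As written, your martingale route would need both of these repairs before it closes.
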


Let us now give the version with infinite input, which is more simple to satisfy due to the absence of boundary effects, except for the power  of the decay:
 \begin{theorem}
\label{thm:ultimate-2}
  Let 
  $F_{0}\in \F$, $F_{W}' $ be defined as in \eqref{FW-general-form},  $\W=\{W_{n};n\geqslant 1\}$  satisfying   \eqref{eq:ball-like}. Let $M_{1},M_{2}$, be independent random elements of $\M$ with law $\mu.$ Assume that for some $C_{0}>0,\alpha >5d/2 $,  
 for all $r\geqslant 0,\ell^d-a.e. \;x_{1},x_{2}\in \mathbb{R}^{d},\zeta \subset  \{(x_{1},M_{1}),(x_{2},M_{2})\} $ ,
 \begin{align}
\label{eq:ultimate-assumption} \left(
\E 
   \left|F_{0}((\eta\cup \zeta ) \cap B_{r} )-F_{0}(\eta\cup \zeta    )
\right| ^{4}
\right)^{1/4}
 \leqslant C_{0}(1+r)^{-\alpha }, 
\end{align}  
and    Assumption \ref{ass:nondeg} is satisfied.
Then $0<\sigma _{0}<\infty $ (defined in \eqref{eq:sigma0}), and
 $$
{\var(F'_{ {W_{n}}})}\sim \sigma _{0}^{2} | W_{n} |,
\hspace{1cm}(\sigma _{0} ^{2}| W _{n}|)^{-1/2}(F'_{ {W_{n}}}-\E F'_{ {W_{n}}}) \xrightarrow[]{\text{\rm{law}}}\;N$$  as $n\to \infty .$
Furthermore,   for $n$ sufficiently large
\begin{align}
\label{eq:ultimate-dK-2}
d_\K  \left(
 {  \frac{ F' _{W_{n}}-\E F '_{W_{n}}}{\var(F' _{W_{n}})^{1/2}}},N
\right)
 \leqslant \kappa  | W _{n}| ^{-1/2} \left(
\frac{C_{0}^{2}}{\sigma _{0}^{ 2}}+\frac{C_{0}^{3}}{\sigma _{0}^{ 3}}+\frac{{ C_{0}^{ 4} } }{\sigma _{0}^{ 4}} 
\right).
\end{align} 
\end{theorem}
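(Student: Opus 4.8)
\emph{Strategy.} The plan is to apply the second-order Poincaré inequality for Poisson functionals (in the form of Last--Peccati--Schulte), which bounds $d_\K$ for a centred functional $F$ by a sum of ratios $\gamma_i/\var(F)^{p_i}$, where the $\gamma_i$ are integrals over two and three copies of $\overline{\mathbb{R}^d}$ of products of fourth moments of the first difference operator $D_xF=F(\eta\cup\{x\})-F(\eta)$ and the second difference operator $D^2_{x_1,x_2}F$ (here $x\in\overline{\mathbb{R}^d}$, marks being averaged). I would then (i) turn hypothesis \eqref{eq:ultimate-assumption} into pointwise decay estimates for $\|D_xF'_{W_n}\|_4$ and $\|D^2_{x_1,x_2}F'_{W_n}\|_4$; (ii) feed these into the $\gamma_i$ and show each is $O(C_0^{\,j}|W_n|^{p})$, the exponents combining with $\var(F'_{W_n})\sim\sigma_0^2|W_n|$ to give \eqref{eq:ultimate-dK-2}; and (iii) establish $0<\sigma_0^2<\infty$ together with the variance asymptotics. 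The central limit theorem is then immediate from $d_\K\to0$.

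\emph{Step 1: decay of the difference operators.} Fix $x$ and pick $s$ with $x\notin B_s$; since $B_s\subset B(0,a_+s)$ it suffices that $|x|>a_+s$, so one may take $s\asymp|x|$. Writing $G_s(\cdot)=F_0(\cdot\cap B_s)$, the point $x$ lies outside $B_s$, hence $D_xG_s=0$ and $D_xF_0=D_x(F_0-G_s)$. Applying \eqref{eq:ultimate-assumption} with $r=s$, once with $\zeta=\{x\}$ and once with $\zeta=\emptyset$, and the triangle inequality gives $\|D_xF_0\|_4\le 2C_0(1+s)^{-\alpha}\le\kappa C_0(1+|x|)^{-\alpha}$. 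The same device yields, for the second difference, $\|D^2_{x_1,x_2}F_0\|_4\le\kappa C_0(1+\max(|x_1|,|x_2|))^{-\alpha}$: choosing $s$ so that the \emph{farther} of the two points lies outside $B_s$ forces $D^2_{x_1,x_2}G_s=0$, and the four terms of $D^2_{x_1,x_2}(F_0-G_s)$ are each controlled by \eqref{eq:ultimate-assumption} with $\zeta\subset\{(x_1,M_1),(x_2,M_2)\}$. Stationarity transfers these bounds to $F_k=F_0(\,\cdot-k)$ with $|x|$ and $\max(|x_1|,|x_2|)$ replaced by $|x-k|$ and $\max(|x_1-k|,|x_2-k|)$, and Minkowski's inequality gives $\|D_xF'_{W_n}\|_4\le\kappa C_0\sum_{k\in W_n}(1+|x-k|)^{-\alpha}$ together with the analogous bound for $D^2$.

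\emph{Step 2: the Berry--Esseen estimate.} Insert the Step 1 bounds into the $\gamma_i$. Writing $S(z)=\sum_{k\in W_n}(1+|z-k|)^{-\alpha}$ and $T(z,z')=\sum_{k\in W_n}(1+\max(|z-k|,|z'-k|))^{-\alpha}$, one has $S\le\kappa$ and $T(z,z')\le\kappa(1+|z-z'|)^{d-\alpha}$ as soon as $\alpha>d$, while $\int S^{\,q}\asymp|W_n|$ because the summand is localised near $W_n$. A routine bookkeeping of the resulting integrals, controlling each occurrence of a second difference by $T$ and integrating out the inner variables first, shows every $\gamma_i$ to be $O(C_0^{\,j}|W_n|^{p_i})$, the slowest integral dictating the threshold $\alpha>5d/2$; this is the only place where the power is used. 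Combined with $\var(F'_{W_n})\asymp\sigma_0^2|W_n|$ from Step 3, the ratios $\gamma_i/\var^{p_i}$ collapse to $\kappa|W_n|^{-1/2}(C_0^2\sigma_0^{-2}+C_0^3\sigma_0^{-3}+C_0^4\sigma_0^{-4})$, which is \eqref{eq:ultimate-dK-2}, and convergence in law follows. The larger threshold than in Theorem \ref{thm:ultimate} reflects that here $D_xF'_{W_n}$ is supported on all of $\overline{\mathbb{R}^d}$ rather than on $\tilde W_n$.

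\emph{Step 3: the variance.} By stationarity $\cov(F_j,F_k)=\cov(F_0,F_{k-j})$, so $\var(F'_{W_n})=\sum_{m}\cov(F_0,F_m)\,|W_n\cap(W_n-m)|$. Approximating $F_0$ by $F_0(\eta\cap B_r)$ and $F_m$ by $F_m(\eta\cap B_r(m))$, which are independent once $2a_+r<|m|$, and bounding the approximation errors in $L^2$ by \eqref{eq:ultimate-assumption}, gives $|\cov(F_0,F_m)|\le\kappa C_0^2(1+|m|)^{-\alpha}$, summable since $\alpha>d$; hence $\sigma_0^2=\sum_m\cov(F_0,F_m)$ is finite. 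Dividing by $|W_n|$ and using \eqref{eq:ball-like} (so that $|W_n\cap(W_n-m)|/|W_n|\to1$ for each $m$) yields $\var(F'_{W_n})/|W_n|\to\sigma_0^2$ by dominated convergence. The genuinely delicate point, and the main obstacle, is $\sigma_0^2>0$: I would fix a family of well-separated boxes of side $\asymp\gamma$ tiling a positive fraction of $\tilde W_n$, expose the restrictions $\eta\cap C_i$ one box at a time to form a martingale-difference decomposition $F'_{W_n}-\E F'_{W_n}=\sum_i\Delta_i$, and bound $\E[\Delta_i^2]$ from below by a constant. Assumption \ref{ass:nondeg} supplies exactly this: resampling the central cell of a box changes the functional by at least $c$ with probability at least $p$, \emph{uniformly in the far-away configuration}, and the decay \eqref{eq:ultimate-assumption} guarantees that this local effect is not washed out by the contributions attached to the other boxes. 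Summing over the $\asymp|W_n|$ boxes gives $\var(F'_{W_n})\ge\kappa^{-1}|W_n|$, whence $\sigma_0^2>0$; reconciling this local non-degeneracy with the global decay hypothesis is where the real work lies.
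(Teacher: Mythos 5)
Your overall strategy coincides with the paper's: the Last--Peccati--Schulte second-order Poincar\'e inequality, with the moment bounds on $D_{\x}F_0$ and $D^2_{\x,\y}F_0$ extracted from \eqref{eq:ultimate-assumption} by writing $F_0=G_s+(F_0-G_s)$ with $G_s(\cdot)=F_0(\cdot\cap B_s)$ and $s\asymp\|x\|/a_+$ so that the added point falls outside $B_s$ (this is exactly the telescoping used in Section \ref{sec:proof-main}, and your direct derivation of the $D^2$ bound from the four $\zeta$-perturbed instances of \eqref{eq:ultimate-assumption} is a legitimate, if anything slightly cleaner, packaging of what the paper does inside Theorem \ref{thm:BE}); the covariance decay $|\cov(F_0,F_m)|\leqslant\kappa C_0^2(1+\|m\|)^{-\alpha}$ via resampling outside disjoint balls is also the paper's argument, and your stationarity identity $\var(F'_{W})=\sum_m\cov(F_0,F_m)|W\cap(W-m)|$ plus dominated convergence is a valid (and for the infinite-input case somewhat tidier) substitute for the paper's boundary-layer estimate \eqref{eq:variance-exact-asymp}.

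There are, however, two genuine gaps. The main one is $\sigma_0>0$, which you yourself flag as ``where the real work lies'' but do not carry out. The missing content is precisely how Assumption \ref{ass:nondeg} survives the conditioning: in a martingale-difference decomposition $\var=\sum_i\E\Delta_i^2$, each $\Delta_i$ is a \emph{conditional expectation} over the unrevealed cells, and Assumption \ref{ass:nondeg} speaks about the functional itself, not about $\E[F\,|\,\mathcal{F}_i]$; turning the former into a lower bound on the latter is not automatic. The paper's route is different in this respect: it uses $\var(F_W)\geqslant\E[\var_{\eta_\gamma^*}(F_W)]$ after conditioning on the points near all block boundaries, then $\var(U)=\inf_z\E(U-z)^2$ restricted to the event $\{\eta_\rho^\gamma=\emptyset\}$ to reduce the conditional variance of one block exactly to the quantity controlled by Assumption \ref{ass:nondeg}, and finally must beat the inter-block conditional covariances, bounded by $\kappa C_0^2\delta^{2d}(\delta-\gamma)^{-\alpha}$, by choosing the block size $\delta$ large --- this is where $\alpha>2d$ is used and where the ``local effect is not washed out'' assertion actually gets proved. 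None of this is in your sketch. The second, smaller, gap is the Kolmogorov bound: the LPS term $\gamma_4$ involves $(\E(F'_{W}-\E F'_{W})^4)^{1/4}$, which is not an integral of difference operators and requires the separate estimate $\E(F'_W-\E F'_W)^4\leqslant\kappa C_0(\E(F_0-\E F_0)^4)^{3/4}|W|^2$ (the paper's \eqref{eq:moments-upper-bound}, proved by an isolated-point/resampling argument); your Step 2 bookkeeping does not account for it, so as written you would obtain the Wasserstein rate but not \eqref{eq:ultimate-dK-2}.
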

  
  \begin{remark}
\label{rk:main}
\begin{enumerate}
\item 
 The application to score functionals (see \eqref{eq:score-representation}) goes as follows:  let $M_{i},0\leqslant i\leqslant 6$ be iid marks with law $\mu $, and assume that $\xi :\M\times \N\to \mathbb{R}$ satisfies 
 for all $r \geqslant 0 ,B\in \mathcal{B}_{\W}^{r}, x_{0}\in \tilde Q_1  ,\zeta  \subset  \overline{\mathbb{R}^{d}}$ with at most $6$ elements,    \begin{align}
\label{eq:ultimate-assumption-score}
 \left(
\E 
  \left|\xi (M_{0},(\eta\cup \zeta   ) \cap B \cap  B_{r}-x_{0})-\xi (M_{0},(\eta\cup \zeta)\cap B-x_{0}  )
\right|  ^{4}
\right)^{1/4}
 \leqslant C_{0} (1+r )^{-\alpha },
\end{align} 
 then the functional $F_{0}=F_{0}^{\xi }$ defined in \eqref{eq:score-representation} satisfies  \eqref{eq:ultimate-assumption-bis}.  To see it, let $\x_{i}=(x_{i},M_{i})$ be the elements of $\zeta $. Fix $\zeta _{1}\subset \{(x_{1},M_{1}),(x_{2},M_{2})\}$,{ apply Lemma \ref{lm:mecke} (with $r =0$) to }
\begin{align*}
\psi ((x_{0},M_{0}),\zeta' )=\mathbf{1}_{\{x_{0}\in \tilde Q_1 \}}&\left| 
\xi (M_{0},  (\zeta' \cup \zeta _{1}) \cap B\cap B_{r}-x_{0})\right.\\
&\left.-\xi (M_{0}, ( \zeta'\cup \zeta _{1})\cap B-x _{0})
\right|,\zeta' \in \N,x_{0}\in \mathbb{R}^{d}.\end{align*} 
It yields 
\begin{align*}
\left(
\E\left|
F_{0}((\eta \cup \zeta_{1}) \cap B\cap B_{r})-F_{0}((\eta \cup \zeta_{1})\cap B )
\right|^{4}
\right)^{1/4}\leqslant &\left(
 \E\left|
\sum_{\x\in \eta \cap \tilde Q_1 }\psi (\x,\eta   )
\right|^{4}
\right)^{1/4}\\
\leqslant & \kappa C_{0} (1+r)^{-\alpha }
\end{align*}for some $C_{0} \geqslant 0$, hence \eqref{eq:ultimate-assumption-bis} is satisfied.
{  In this framework the asymptotic  variance can also be expressed as 
\begin{align*}
\sigma _{0}^{2}=\E \xi (M_{0};\eta )^{2}+\int_{ {\mathbb{R}^{d}}}(\E[\xi (M_{0},\eta \cup \{(x,M_{1})\}) \xi (M_{1},\eta \cup \{(0,M_{0})\}-x )] -[\E[\xi (M_{0};\eta )]]^{2})d x,
\end{align*}} see for instance (4.10) in \cite{KenMol}.
\item A variant of stabilisation, called {\it strong stabilisation}, occurs when    the add-one cost version of the functional is stabilising instead of the functional itself. Penrose and Yukich derived variance asymptotics and asymptotic normality \cite{PenYuk}  in such a context.  Let us indicate how the current approach could be adapted to strong stabilisation:  let $\eta '$ be an independent copy of $\eta $, and for $r>0$, $\eta _{r}=(\eta \cap B_{r})\cup (\eta '\cap B_{r}^{c})$. Assume that a functional has a strong stabilisation radius with the tail decaying as a sufficiently low power of $r$. In this case,  \eqref{eq:ultimate-assumption-bis} needs to hold with the left hand member replaced with
 $
\E \left(
 | F_{0}((\eta \cup \zeta )\cap B)-F_{0}((\eta _{r}\cup \zeta) \cap B) | ^{4}
\right).
 $ Then it should be possible to adapt  the proofs of Theorems \ref{thm-variance} and \ref{thm:BE} to be able to prove that the Berry-Esseen bounds and variance upper bounds   still hold, under this new hypothesis. 
\item
% For non-degenerate asymptotic volumic variance to hold ($\sigma _{0}>0$), one essentially needs to find a bounded set $A\subset \mathbb{R}^{d}$ such that 
%\begin{align*}
%\limsup_{\beta >0}\var(F_{Q_{\beta  }}(\eta \cap A))>0.
%\end{align*} and check a continuity requiIn many cases there is $k\in \mathbb{N}$ such that, with $U_{1},\dots ,U_{k}$ uniform independent variables in $A$, the lower bound $\var(F_{W_{a }}(\{U_{1},\dots ,U_{k}\}))\P( | \eta \cap A | =k)$ suffices, see Example \ref{ex:NN} for an illustration.
% It depends on the context wether the stabilisation condition is weaker or stronger than other such conditions that appear in the literature, but the non-triviality condition is at least simpler to state, and maybe to satisfy.
Regarding variance asymptotics, recent results  can be found in  the  literature, but the assumptions are of  different nature, either dealing with different qualitative long range behaviour (i.e. strong stabilization in \cite{PenYuk,LSY}), or  different non-degeneracy statements \cite{LPS}, whereas Assumption \ref{ass:nondeg} is 
a mixture of non-triviality and continuity of the functional on large inputs.  Penrose and Yukich \cite{PenYuk}  give a condition under which the asymptotic variance is strictly positive in Theorem 2.1. The condition is that the functional is {\it strongly stabilising}, and that the variable
\begin{align*}
\Delta (\infty ):=\lim_{\delta \to \infty }[F_{Q_{\delta }}(\eta\cup \{0\} )-F_{Q_{\delta }}(\eta  )]
\end{align*}
is non-trivial. It roughly means that  for $\delta $ sufficiently large, and $\rho $ sufficiently small, 
\begin{align*}
 \text{\rm{Var}}(|F_{Q_{\delta }}(\eta _{\rho }\cup \eta ^{\rho })-F_{Q_{\delta }}(\eta _{\rho })\;|\;|\eta ^{\rho }|=1\;)>0,
\end{align*}
and this is very close to Assumption \ref{ass:nondeg} in the particular case $\rho =\gamma $. This particular case seems more delicate to deal with that when $\gamma $ is much larger than $\rho $, because in the latter case the interaction between $\eta ^{\rho }$ and $\eta _{\gamma }$ hopefully becomes small.
\item Similar results where the input consists of $m_{n}$ iid variables uniformly distributed in $\tilde W_{n}$, with $m_{n}= | W_{n} | $, should be within reach by applying the results of \cite{LRP}, following a route similar to   \cite{LSY}.
\end{enumerate}
\end{remark}

{\bf Shot-noise excursions}  Let $\{g_{m};m\in \M\}$ be a set of measurable functions $\mathbb{R}^{d}\to \mathbb{R}$ not containing the function $g\equiv 0$ indexed by some probability space $(\M,\mathscr{M},\mu )$. Let  $\eta $  be a Poisson process with intensity measure $\ell^d\times \mu $ on $\overline{\mathbb{R}^{d}}$. Introduce the   shot noise processes with {\it impulse distribution} $\mu $ by, for $\zeta \in \N, $
\begin{align}
\label{def-SN}
f_{\zeta  }(y  )= &\sum_{\x=(x,m)\in \zeta  }g_{m}(y-x),y\in \mathbb{R}^{d}.
\end{align} 
Conditions under which $f_{\zeta  }$ is well defined on Poisson input are discussed in Section \ref{sec:SN}, along with a proper choice for $\N_{0}$. 
   Given some threshold $u\in \mathbb{R}$, we consider the  excursion set
$ 
\{f_{\zeta }\geqslant u\}=\{x\in \mathbb{R}^{d}:f_{\zeta }(x   )\geqslant u\} 
 $ and the functionals
 $
 \zeta \mapsto \ell^d(\{f_{\zeta }\geqslant u\}\cap \tilde W) $, $
 \zeta \mapsto \Per( \{f_{\zeta }\geqslant u\} ; \tilde W),
$ where for $A,B\subset \mathbb{R}^{d}; \Per(A;B)$ denotes the amount of perimeter of $A$ contained in $B$  in the variational sense, see Section \ref{sec:SN-per}.  The total curvature, related to the Euler characteristic is also studied in Section \ref{sec:sn-general} for a specific form of the kernels.

  A shot noise field is the result of random functions translated at random locations in the space. It has been introduced by Campbell to model thermionic noise \cite{campbell},  and has been used since then under different names  in many fields such as pharmacology, mathematical morphology \cite[Section 14.1]{Lan02},  image analysis \cite{GaborNoise},  or telecommunication networks \cite{BaccelliBiswas,BaccelliBook}.  Bierm\'e and Desolneux \cite{BieDes12,BieDesEuler,BieDes} have computed the mean values for some geometric properties of excursions. More generally, the activity about asymptotic properties of random fields excursions has recently increased, with the notable recent contribution of Estrade and L\'eon \cite{EstLeo}, who derived a central limit theorem for the Euler characteristic of excursions of stationary Euclidean Gaussian fields.
   Bulinski, Spodarev and Timmerman \cite{BST}  give general conditions for asymptotic normality of the excursion volume for quasi-associated random fields. Their results apply  to shot-noise fields, under   conditions of non-negativity and uniformly  bounded marginal density, which can be verified in some specific examples. We give  here the asymptotic variance and  central limit theorems for volume and perimeter of excursions under weak assumptions on the density,   as illustrated in Section \ref{sec:SN}. Still,  a certain control of the distribution is necessary, and we provide  in Lemma \ref{lm:sn-density} a uniform bound on $\sup_{v\in \mathbb{R},\delta >0}(\delta \ln(\delta ))^{-1}\P(f_{\eta }(0 )\in [v-\delta ,v+\delta ])$  when $f$ is of the form  
\begin{align}
\label{eq:sn-g}
f_{\zeta }(x )=\sum_{i\in I}g(\|x-x_{i}\|)
\end{align}where  $\zeta \in \N$, and $x_{i},i\in I,$ are the (random) spatial locations of its points, with $g$ a smooth  strictly non-increasing  function  $ (0,\infty )\to (0,\infty )$ with a derivative not decaying too fast to $0$.
Our results allow  to treat fields with singularities, such as those observed in astrophysics or telecommunications, see  \cite{BaccelliBiswas}.
 
  Let $\mathcal{M}_d $ be the space of measurable subsets of $\mathbb{R}^{d}$. The results of Section \ref{sec:SN} also apply  to processes that can be written under the form 
\begin{align}
\label{eq:sn-a}
f_{\zeta }(x )=\sum_{i\geqslant 1}L_{i}\mathbf{1}_{\{x-x_{i}\in A_{i}\}},x\in \mathbb{R}^{d},
\end{align}where the $(L_{i},A_{i}),i\geqslant 1$ are iid couples of $\mathbb{R} \times \mathcal{M}_d $, endowed with a proper $\sigma $-algebra and probability measure, see Section \ref{sec:sn-general}. Such models are called \emph{dilution functions} or \emph{random token models} in mathematical morphology, see for instance \cite[Section 14.1]{Lan02},  where they are used to simulate random functions with a prescribed covariance. 

To the best of our knowledge, the results about the perimeter or the Euler characteristic are the first of their kind for shot noise models, and the results about the volume improve existing results, see the beginning of Section \ref{sec:SN-volume} for more details.
%The results  also apply to the Euler characteristic, provided the latter is properly defined on the class of excursions arising from such shot-noise processes (Proposition \ref{prop:volumes-token}).
%

\subsection{Stabilization and nearest neighbour statistics}
\label{sec:stab-NN}

Let us transpose our results in the case where the functional stabilises.

\begin{theorem}
\label{thm:stab}Let $\W=\{W_{n};n\geqslant 1\}$ be a class of subsets of $\mathbb{Z} ^{d}$.
Let $F_{W}$ be defined as in \eqref{FW-general-form-Fk} (resp. as in \eqref{FW-general-form-Fk}-\eqref{eq:score-representation} with $F_{0}=F_{0}^{\xi }$ for some score function $\xi $). Assume that
  for $x_{i}\in \mathbb{R}^{d}, M_{i} $ independent with law $\mu ,i\geqslant 1 ,   \zeta \subset \{(x_{i},M_{i});i=1,\dots ,6\} ,\eta '=\eta \cup \zeta $,  there is a random variable $R\geqslant 0$ such that   almost surely, for $r\geqslant R,B\in \mathcal{B}_{\W}^{r},$ 
\begin{align}
\label{eq:classic-stab}
F_{0}(\eta '  \cap  B_{r}\cap B)=&F_{0}(\eta' \cap B ).\\
\label{eq:classic-stab-score}
( \text{\rm{resp. }}\xi  (m,\eta '\cap B_{r}\cap B-x )=&\xi (m,\eta '\cap B-x ), (x ,m)\in \eta \cap\overline{ \tilde Q_1 }.)
\end{align}Then \eqref{eq:ultimate-assumption-bis}
is satisfied if for some $p,q>1$ with $1/p+1/q=1$, $\mathbb{P}(R>r)\leqslant C r^{-8dp-\varepsilon }$ for some $C,\varepsilon >0$, under the moment condition 
\begin{align}
\notag\sup_{r\geqslant 0,B\in \mathcal{B}_{\W}^{r}}\E\left|
F_{0}(\eta '\cap B\cap B_{r})
\right|^{4q}<\infty\\
\label{eq:moment-stab}
 (\text{\rm{resp. }}\sup_{r\geqslant 0,B\in \mathcal{B}_{\W}^{r},x_{0}\in \tilde Q_1 }\E\left|
\xi (M_{1},\eta '\cap B\cap B_{r}-x_{0})
\right|^{4q}<\infty ).
\end{align}

For the infinite input version, ``$\cap B$'' should be removed from \eqref{eq:classic-stab} (resp. \eqref{eq:classic-stab-score}),    the exponent $-8dp-\varepsilon $ should be replaced by $-10dp-\varepsilon $, and then \eqref{eq:ultimate-assumption} would hold.
\end{theorem}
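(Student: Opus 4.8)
The plan is to exploit the defining feature of stabilisation: on the event $\{R\leqslant r\}$ the configurations $\eta'\cap B_r\cap B$ and $\eta'\cap B$ feed $F_0$ exactly the same input by \eqref{eq:classic-stab}, so the increment
$$D_r:=F_0(\eta'\cap B_r\cap B)-F_0(\eta'\cap B)$$
vanishes there and satisfies $D_r=D_r\mathbf{1}_{\{R>r\}}$ almost surely, with $\eta'=\eta\cup\zeta$. The whole idea is that the fourth moment of $D_r$ is then governed by the small probability of the rare event $\{R>r\}$, and it remains only to peel this probability off the (uniformly bounded) moments of $D_r$ by H\"older's inequality.

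First I would write, using that $D_r$ is supported on $\{R>r\}$ together with H\"older's inequality for the conjugate pair $(p,q)$ applied to $|D_r|^4$ and $\mathbf{1}_{\{R>r\}}$,
$$\E|D_r|^4=\E\big[|D_r|^4\mathbf{1}_{\{R>r\}}\big]\leqslant \big(\E|D_r|^{4q}\big)^{1/q}\,\P(R>r)^{1/p}.$$
The triangle inequality bounds $\E|D_r|^{4q}$ by $\kappa\big(\E|F_0(\eta'\cap B_r\cap B)|^{4q}+\E|F_0(\eta'\cap B)|^{4q}\big)$. The first term lies directly under the supremum in the moment hypothesis \eqref{eq:moment-stab}; for the second I would note that stabilisation gives $F_0(\eta'\cap B)=\lim_{r\to\infty}F_0(\eta'\cap B_r\cap B)$ almost surely, so Fatou's lemma yields $\E|F_0(\eta'\cap B)|^{4q}\leqslant \sup_{r,B}\E|F_0(\eta'\cap B\cap B_r)|^{4q}<\infty$. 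Hence $\E|D_r|^{4q}\leqslant \kappa M$ with $M$ the finite supremum in \eqref{eq:moment-stab}.

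Then I would insert the tail bound $\P(R>r)\leqslant Cr^{-8dp-\varepsilon}$, so that $\P(R>r)^{1/p}\leqslant C^{1/p}r^{-8d-\varepsilon/p}$ and therefore
$$\big(\E|D_r|^4\big)^{1/4}\leqslant \kappa\, r^{-2d-\varepsilon/(4p)}.$$
Since $\varepsilon>0$ and $p>1$, the exponent $\alpha=2d+\varepsilon/(4p)$ is strictly larger than $2d$, exactly the range required in \eqref{eq:ultimate-assumption-bis}; passing from $r$ to $1+r$ costs only a constant, because for $r$ near $0$ the left side is bounded uniformly (again by \eqref{eq:moment-stab} and the triangle inequality) while $(1+r)^{-\alpha}$ stays bounded below, and for $r\geqslant 1$ the two powers are comparable. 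For the infinite-input version the computation is verbatim after deleting ``$\cap B$'' throughout, the tail exponent $10dp+\varepsilon$ now producing $\alpha=5d/2+\varepsilon/(4p)>5d/2$, which matches \eqref{eq:ultimate-assumption}.

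Finally, for the score formulation I would run the same estimate on the increment $\xi(m,\eta'\cap B_r\cap B-x)-\xi(m,\eta'\cap B-x)$, which \eqref{eq:classic-stab-score} forces to vanish for $r\geqslant R$, thereby verifying \eqref{eq:ultimate-assumption-score}; the implication from \eqref{eq:ultimate-assumption-score} to \eqref{eq:ultimate-assumption-bis} for $F_0=F_0^{\xi}$ is already furnished by Remark \ref{rk:main}(1). I expect the main obstacle to be not the H\"older step but the bookkeeping around the moment hypothesis: one must check that the supremum in \eqref{eq:moment-stab} genuinely controls the full-configuration term $\E|F_0(\eta'\cap B)|^{4q}$ (handled by the Fatou argument above), and that the stabilisation identity is legitimately invoked for the enlarged input $\eta'=\eta\cup\zeta$ carrying up to six extra marked points, uniformly over $B\in\mathcal{B}_{\W}^{r}$, rather than for $\eta$ alone.
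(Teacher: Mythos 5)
Your proof is correct and follows essentially the same route as the paper's: write the increment as itself times $\mathbf{1}_{\{R>r\}}$, apply H\"older with the conjugate pair $(p,q)$, control the $4q$-moment via \eqref{eq:moment-stab}, and convert the tail bound into the polynomial decay $\alpha=2d+\varepsilon/(4p)>2d$ (resp. $5d/2+\varepsilon/(4p)>5d/2$). Your Fatou argument for bounding $\E|F_0(\eta'\cap B)|^{4q}$ by the supremum in \eqref{eq:moment-stab} is a detail the paper leaves implicit, and your detour through \eqref{eq:ultimate-assumption-score} for the score case replaces the paper's more direct observation that \eqref{eq:classic-stab-score} summed over $\eta\cap\overline{\tilde Q_1}$ yields \eqref{eq:classic-stab}; both are sound.
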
  

  \begin{proof}
 For $r\geqslant 0,B\in \mathcal{B}_{\W}^{r},$ if \eqref{eq:classic-stab} holds,
  \begin{align*}
\E\left|
F_{0}(\eta '\cap B)-F_{0}(\eta '\cap B\cap B_{r})
\right|^{4}=& \E\mathbf{1}_{\{R>r\}}\left|
F_{0}(\eta '\cap B)-F_{0}(\eta '\cap B\cap B_{r}))
\right|^{4}\\
\leqslant &\P(R>r)^{1/p}\left(\E\left(
 | F_{0}(\eta '\cap B\cap B_{r}) | 
+ | F_{0}(\eta '\cap B) | 
\right)^{4q}\right)^{1/q},
\end{align*}
hence \eqref{eq:ultimate-assumption-bis} is satisfied. If $F_{0}=F_{0}^{\xi }$, and \eqref{eq:classic-stab-score} holds, for $r\geqslant R$
\begin{align*}
F_{0}^{\xi }(\eta '\cap B_{r}\cap B)=&\sum_{(x,m)\in \eta \cap \tilde Q_1 }\xi (m,\eta '\cap B_{r}\cap B-x)\\
=&\sum_{(x,m)\in \eta \cap \tilde Q_1 }\xi (m,\eta ' \cap B-x)\\
=&F_{0}^{\xi }(\eta '\cap B),
\end{align*}and \eqref{eq:classic-stab} holds.
\end{proof}

\begin{remark} \begin{enumerate}
\item The variance non-degeneracy is 	a disjoint issue, Assumption \ref{ass:nondeg} has to be satisfied independently. Otherwise, if one is only interested in asymptotic normality, the above requirements can be weakened, see Theorem \ref{thm:BE}. 
\item The definition of a stabilisation radius often involves stability under the addition of an external set, here denoted by $\zeta $. A nice aspect of \eqref{eq:classic-stab}-\eqref{eq:classic-stab-score} with respect to classical results is that $\zeta $ does not depend on $\eta $, i.e. $\zeta $ does  not in general  achieve the worst case scenario given $\eta $. 
 On the other hand,   in the finite input version, one has to deal here with the intersection with $B\in \mathcal{B}_{\W}^{r}$. See Example \ref{ex:NN} for an application to nearest neighbour statistics.
  \item Asymptotic results for stabilizing functionals have been derived in numerous work, see the survey  \cite[Chapter 4]{KenMol} and references therein. In particular, Matthew Penrose first proved such results under polynomial decay for the stabilisation radius. 
 \end{enumerate}
\end{remark}

\begin{example}[Nearest neighbours statistics] 
\label{ex:NN} 
Let us develop the example of nearest neighbour statistics for illustrative pruposes.
Given $\zeta \in \N,x\in \mathbb{R}^{d} $, denote by  $N\hspace{-.5mm}N(x;\zeta )$ the \emph{nearest neighbour} of $x$, i.e. the closest point of $\zeta \setminus \{x\}$ from $x$, with ties broken by the lexicographic order.  Define recursively, for $k\geqslant 1$, $N\hspace{-.5mm}N_{k}(x;\zeta )=N\hspace{-.5mm}N(x;\zeta \setminus \cup _{i=0}^{k-1}N\hspace{-.5mm}N_{i}(x;\zeta ))$, with $x=N\hspace{-.5mm}N_{0}(x;\zeta )$, and $N\hspace{-.5mm}N_{\leqslant k}(x;\zeta )=\cup _{i=0}^{k}N\hspace{-.5mm}N_{i}(x;\zeta )$. Fix $k\geqslant 1$ and call {  neighbours} of $x$ within $\zeta $ the set $N_{k}(x;\zeta )$ consisting of all  points $y\in \zeta $ such that $x\in N\hspace{-.5mm}N_{\leqslant k}(y,\zeta \cup \{x\})$ or $y\in N\hspace{-.5mm}N_{\leqslant k}(x;\zeta )$. %Straightforward geometric considerations yield that the cardinality of $N_{k}(x;\zeta )$ for some $x\in \zeta $ is bounded by a deterministic number $\nu  _{k,d}$ not depending on $\zeta$. Below we assume $k$ and $d$ are fixed and put $\nu  =\nu  _{k,d}$.

Let then $\varphi $ be a real functional defined on finite subsets of $\mathbb{R}^{d}$, and define the score function, for $\zeta\in \N$,
\begin{align*}
 \xi (\zeta )=\begin{cases}
\varphi (N_{k}(0;\zeta ))\\
 0$ if $ | \zeta  | < k. 
\end{cases}
\end{align*} Assume that for each $j\geqslant k,$ the induced mapping on $(\mathbb{R}^{d})^{j}$, $\tilde \varphi _{j}: (x_{1},\dots ,x_{j})\mapsto \varphi (\{x_{1},\dots ,x_{j}\})$, is measurable. The simplest example would be for $k=1$ the functional $\varphi (A)=\frac{1}{2}\sum_{y\in A }\|y\|$, so that
$ 
F_{W}(\zeta )=\sum_{x\in \zeta }\xi (\zeta -x)
 $ gives the total length of the undirected nearest-neighbour graph for $\zeta \subset \tilde W $. 
 Notice that no marking is involved in this setup.
  Such statistics are used in many applied fields,  in  nonparametric estimation procedures, or  more recently in estimation of high-dimensional data sets \cite{LevBic}. Many asymptotic results have been established since the central limit theorem of  Bickel and Breiman \cite{BicBre}, see for instance \cite{LSY,LPS,PenYuk}.
 
\begin{theorem}For $n\geqslant 1$, let  
\begin{align*}
G_{n}=\sum_{x\in \eta\cap \tilde Q_{n^{1/d}} }\varphi (N_{k}(x;\eta \cap \tilde Q_{n^{1/d}})).
\end{align*}Assume that there is $C,c>{0},u<d/4$ such that for all $x_{1},\dots ,x_{m  }\in \mathbb{R}^{d}$,
\begin{align}
\label{eq:bound-NN}
\varphi (\{x_{1},\dots ,x_{m   }\})\leqslant C\exp(c\max_{i}\|x_{i}\|^{u})
\end{align}and  that $\varphi $ is not degenerate: $\varphi (\{x_{1},\dots ,x_{k}\})\neq 0$ for   $(x_{1},\dots ,x_{k})$ in a non-negligible subset of $(\mathbb{R}^{d})^{k}$.
Then  $\var(G_{n})\sim  n \sigma _{0}^{2}$,  with $\sigma _{0}>0$ defined in Remark \ref{rk:main}, and $n^{-1/2}(G_{n}-\E G_{n})  $ converges in law to a centred Gaussian variable with variance $\sigma _{0}^{2}$, with bounds on the Kolmogorov distance proportional to $ n ^{-1/2}$.\end{theorem}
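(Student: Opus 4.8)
The plan is to realise $G_n$ as a functional of the form \eqref{FW-general-form-Fk} and apply Theorem \ref{thm:ultimate}, using Theorem \ref{thm:stab} to obtain the decay hypothesis \eqref{eq:ultimate-assumption-bis} from the (classical) exponential stabilisation of nearest neighbour graphs. First I would set $W_n=Q_{n^{1/d}}$, so that $|W_n|\sim n$ and the cubes $W_n$ satisfy the regularity condition \eqref{eq:ball-like}, and introduce the score $\xi(\zeta)=\varphi(N_k(0;\zeta))\mathbf{1}_{\{|\zeta|\geqslant k\}}$. Via the representation \eqref{eq:score-representation}, $G_n=F_{W_n}(\eta)$ (no marks are present, so $\M$ is a singleton). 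It then suffices to check the two structural inputs of Theorem \ref{thm:ultimate}: the decay condition \eqref{eq:ultimate-assumption-bis} and the non-degeneracy Assumption \ref{ass:nondeg}.

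For the decay condition I would invoke Theorem \ref{thm:stab} in its score form. The key classical geometric facts are that a point can belong to the $k$-nearest-neighbour set $N_k(0;\cdot)$ of at most $c_d k$ other points (a bounded reverse-degree property in $\mathbb{R}^d$), and that the distance $D$ from the origin to the farthest relevant neighbour has a tail $\P(D>r)\leqslant C\exp(-c r^{d})$, since forcing a neighbour at distance $r$ requires a ball of volume of order $r^d$ to be almost empty. Consequently there is a stabilisation radius $R$ obeying \eqref{eq:classic-stab-score} with $\P(R>r)\leqslant C\exp(-c r^{d})$, uniformly over the insertion of the at most six extra points $\zeta$ and over the admissible windows $B\in\mathcal{B}_{\W}^{r}$; this beats the polynomial requirement $r^{-8dp-\varepsilon}$ for every admissible $p,q$. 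It remains to secure the moment bound \eqref{eq:moment-stab}: the growth hypothesis \eqref{eq:bound-NN} gives $|\xi|\leqslant C\exp(c D^{u})$, and since $u<d/4<d$ while $D$ has an $\exp(-c r^{d})$ tail, all exponential moments $\E\exp(\lambda D^{u})$ are finite, whence $\sup_{r,B,x_0}\E|\xi(M_1,\eta'\cap B\cap B_r-x_0)|^{4q}<\infty$. Theorem \ref{thm:stab} then yields \eqref{eq:ultimate-assumption-bis} for some $\alpha>2d$.

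For the non-degeneracy I would work with $\gamma\gg\rho$ as suggested in Remark \ref{rk:main}, so that the points of $\eta^{\rho}\subset\tilde Q_{\rho}$ and those of $\eta_{\gamma}\subset\tilde Q_{\gamma}^{c}$ are separated by a gap of width of order $\gamma$ and essentially do not interact. Conditioning on the positive-probability event $\{|\eta^{\rho}|=k+1\}$, the inner points sit at distance of order $\gamma$ from every point of $\eta_{\gamma}$, so inserting them changes $F_{Q_{\delta}}$ only through the scores of the inner points themselves: the far points keep their $O(1)$ neighbours and acquire no new ones. The resulting increment is a fixed measurable function of the $k+1$ i.i.d. uniform inner points, and the non-degeneracy of $\varphi$ (nonzero on a non-negligible set) forces this increment to be nonzero on a set of positive probability; truncating its value, one obtains $c,p>0$ with $\P(|F_{Q_{\delta}}(\eta_{\gamma})-F_{Q_{\delta}}(\eta^{\rho}\cup\eta_{\gamma})|\geqslant c)\geqslant p$, uniformly in $\delta>\gamma$. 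This verifies Assumption \ref{ass:nondeg}, and Theorem \ref{thm:ultimate} then gives $0<\sigma_0<\infty$, $\var(G_n)\sim\sigma_0^{2}n$, the central limit theorem, and the Kolmogorov bound \eqref{eq:ultimate-dK}, which is of order $|W_n|^{-1/2}\sim n^{-1/2}$.

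I expect the main obstacle to be the uniform moment bound \eqref{eq:moment-stab} together with stabilisation near the window boundary: one must show that the $k$-nearest-neighbour distances retain an $\exp(-c r^{d})$ tail uniformly over all translated windows $B\in\mathcal{B}_{\W}^{r}$ and all insertions of up to six points, including configurations where the scored point lies close to $\partial B$. Controlling the reverse-neighbour contributions (points that acquire the scored point among their $k$ nearest neighbours) in such boundary configurations, while keeping the bound independent of $r$ and $B$, is the delicate point; by contrast the non-degeneracy step is conceptually clean once the gap argument isolates the inner contribution.
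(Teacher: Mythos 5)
Your overall strategy coincides with the paper's: realise $G_n$ as $F_{W_n}$ with $W_n=Q_{n^{1/d}}$ and the score $\xi(\zeta)=\varphi(N_k(0;\zeta))$, obtain \eqref{eq:ultimate-assumption-bis} from Theorem \ref{thm:stab} via a stabilisation radius with super-polynomial tail, and verify Assumption \ref{ass:nondeg} by separating $\eta^{\rho}$ from $\eta_{\gamma}$. The difficulty is that both key steps are asserted rather than proved, and in each case the assertion is exactly where the work lies. For stabilisation you claim $\P(R>r)\leqslant Ce^{-cr^{d}}$ uniformly over the windows $B\in\mathcal{B}_{\W}^{r}$, but the standard empty-ball argument controls $B(y,\|y\|)$ only in full space; when $0$ and a far point $y$ both lie in a translated cube $B$, one must show that $B\cap B(y,\|y\|)$ always contains a region of volume of order $\|y\|^{d}$ in which $\eta$ can be forced to have $k$ points, so that $y$ cannot adopt $0$ as a neighbour. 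The paper does precisely this with an explicit finite family of hypercubes $Q_{i}\subset B_{1}\setminus B_{1/2\sqrt{d}}$ whose dilates $RQ_{i}$ are shown (in the Claim inside the proof) to sit inside every such intersection; you flag this as ``the delicate point'' but do not supply the construction, so the decay condition is not actually established. Your moment-bound discussion, by contrast, is fine: once $R$ has an $e^{-cr^{d}}$ tail, $\E\exp(\lambda R^{u})<\infty$ for every $\lambda$ since $u<d$, which is all \eqref{eq:moment-stab} requires.

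The non-degeneracy step has a similar gap. The statement that the far points ``acquire no new neighbours'' after insertion of $\eta^{\rho}$ is not deterministic: a point $y\in\eta_{\gamma}$ adopts an inner point among its $k$ nearest neighbours whenever $|B(y,\|y\|-\rho)\cap\eta_{\gamma}|\leqslant k$, and since $\delta>\gamma$ is arbitrary there are of order $\delta^{d}$ such candidates, so the exceptional probability does not trivially vanish. The paper controls it with the union bound $\sum_{j\in Q_{\delta}\setminus Q_{\gamma}}\P(|\eta\cap B(j,\|j\|-\rho+\sqrt{d})|\leqslant k)$, which is summable uniformly in $\delta$ and made smaller than $p/2$ by enlarging $\gamma$; without this estimate the claimed uniformity in $\delta$ of your lower bound $p$ is unjustified. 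With these two lemmas supplied, your argument matches the paper's proof.
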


 %Our results also apply to  the Levina-Bickel statistic \cite{LevBic} with order $k\geqslant 6$, used to estimate intrinsic dimension of high dimensional data sets, with the functional 
%\begin{align*}
%\varphi (A)=\mathbf{1}_{\{ | A | \geqslant k\}}\left(
%\frac{1}{k-1}\sum_{j=1}^{k-1}\sum_{y\in A}\mathbf{1}_{\{y=N\hspace{-.5mm}N_{j}(0,A)\}}\log\left(
%\frac{\|N\hspace{-.5mm}N_{k}(0,A)\|}{\|y\|}
%\right)
%\right)^{-1}.
%\end{align*}
%Briefly, to prove the moment condition, one must use the fact that the density of $d  \varphi (\eta \cap B_{r})^{-1}$ close to $0$ is that of a $\Gamma (k-1,1)$ distribution for $r>0$, and the non-triviality is clear given the non-negativity of $\varphi .$

\begin{proof} 
Call hypercube a set of the form $x+[-a,a]^{d}$ for some $x\in \mathbb{R}^{d},a\geqslant 0.$
 For this proof we   choose  $B_{r}=[-r,r]^{d},r\geqslant 0$ (hence $a_{-}=1,a_{+}=\sqrt{d}$). Let  $a_{0}\in (0,1/4)$ and  $Q_{i}=x_{i}+[-a_{0},a_{0}]^{d},i=1,\dots ,q$ be hypercubes  contained in $B_{1}\setminus B_{1/2\sqrt{d}}$ such that the following holds: for all hypercube $B$ that touches $B_{1/2\sqrt{d}}$ and $B_{1}^{c}$ and $y\in B\cap  B(0,1)^{c}$, there is $i$ such that $Q_{i}\subset  (B\cap B(y,\|y\|)$). Let $Q_{i}'=x_{i}+[-a_{0}/2,a_{0}/2]^{d}$ and
\begin{align*}
R=\min\{r\geqslant 2\sqrt{d}(1+1/a_{0}) : | \eta \cap rQ_{i}' | \geqslant k\text{\rm{ for every }}i=1,\dots ,q \}.
\end{align*}The fact that $R':=\sqrt{d}(R+1)$
is a stabilization radius in the sense of \eqref{eq:classic-stab-score} is implied by the following claim:
\begin{claim}Let $r\geqslant R'$,
%such that $r-\sqrt{d}>a_{0}r/2$ (i.e. $r>\sqrt{d}/(1+a_{0}/2)$)$,
 $ B\in B_{\W}^{r},x\in B_{1} $.   All elements of $N_{k}(0,\eta '\cap B-x)$ are in   $B(0,\sqrt{d}R)$.   \end{claim}
\begin{proof} 
 Let $y\in \eta '\cap (B-x)$ be such that $0\in N\hspace{-.5mm}N_{\leqslant k}(y,(\eta ' \cap B-x)\cup \{0\})$. Assume that $y\notin B(0,R)$, hence $y \in (B-x) \cap B(0,R)^{c}$. Since $B\cap B_{r}^{c}\neq \emptyset $, $(B-x)\cap B^{c}_{r-\sqrt{d}}\neq \emptyset $, and $(B-x)\cap B_{R}^{c}\neq \emptyset $.  $0\in B$  yields $(B-x)\cap B_{t}\neq \emptyset $ for $t\geqslant 1$, hence for $t=R/2\sqrt{d}$. It follows that there is $i$ such that $B(y,\|y\|)\cap (B-x)$ contains $RQ_{i}$. Since $\eta $ has (at least) $k$ points in $RQ_{i}'$ and $RQ_{i}'-x\subset RQ_{i}$ (using $Ra_{0}/2\geqslant \sqrt{d}$), $\eta -x$ has $k$ points in $RQ_{i}$, hence $(\eta ' \cap B-x)\cap B(y,\|y\|)$ contains at least $k$ points, and they are all closer from $y$ than $0$, which contradicts $0\in N\hspace{-.5mm}N_{\leqslant k}(y,(\eta '\cap B-x)\cup \{0\})$. This proves $y\in B(0,R)$. 
 
 For every $i$, $RQ_{i}$ contains $k$ points of $\eta $ that are in $B_{R}$, hence in $B(0,R')$, hence $N\hspace{-.5mm}N_{\leqslant k}(0,\eta '\cap B-x)\subset B_{R'}$.
\end{proof}

The claim implies that $N_{k}(0,\eta '\cap B-x)=N_{k}(0,\eta '\cap B\cap B_{r}-x)$ for $r\geqslant R'.$
We have for $r\geqslant 0$, 
\begin{align*}
\P(R\geqslant r)\leqslant&\sum_{i=1}^{q} \P( | \eta \cap rQ_{i}' | \leqslant k-1)\leqslant \lambda r^{(k-1)d}e^{- \lambda' r^{d}}
\end{align*} (for some $\lambda,\lambda ' >0)$, and a similar bound holds for $R'$.
For the moment condition, note that for $r >0$, the neighbours of $0$ in $\eta \cap B_{r}\cap B-x$ are at most at distance $R' $, hence, in virtue of \eqref{eq:bound-NN}, uniformly in $r,B,$ for $\varepsilon >0,$
\begin{align*}
\E  | \xi (\eta '\cap B_{r}\cap B-x ) | ^{4+\varepsilon  }\leqslant &C \E[\exp (cR' )^{ {(4+\varepsilon )u} }] 
\end{align*}
and this quantity is finite if $\varepsilon $ is chosen such that $(4+\varepsilon )u<d$, and \eqref{eq:classic-stab-score}-\eqref{eq:moment-stab} hold, hence \eqref{eq:ultimate-assumption-bis} holds.

Let us check Assumption \ref{ass:nondeg}. Note that every result giving variance lower bounds for such functionals requires some kind of non-triviality check, as in \cite[Lemma 6.3]{PenYuk}, and the following result could likely be deduced from it. We prefer to present a self-contained proof since this example is supposed to illustrate the current method. Let  $A\subset (\mathbb{R}^{d})^{k}$ be such that $\tilde \varphi_{k}>0 $ on $A$ and $A\subset \text{\rm{int}}(\tilde Q_{\rho }^{k})$ for some $\rho >1$.  Hence there is $c>0$ such that for $\delta >\rho ,$ $p:=\P ( | F_{Q_{\delta }}(\eta ^{\rho }) | \geqslant c, | \eta ^{\rho } | =k+1)>0$ does not depend on $\delta .$
 It is clear that for $\gamma >3\rho  ,x\in \eta ^{\rho }$, if $ | \eta ^{\rho } | =k+1$, $N\hspace{-.5mm}N_{\leqslant k}(x;\eta ^{\rho }\cup \eta _{\gamma })\subset  \eta ^{\rho }.$ Reciprocally, if for   $x\in \eta _{\gamma }$, $ | B(x,  \|x\| -\rho )  \cap \eta _{\gamma } |> k+1 $, $x$  has its $k$ nearest neighbours in $\eta _{\gamma }$, and hence none in $\eta ^{\rho }$. If the two latter conditions are satisfied, $F_{Q_{\delta }}(\eta ^{\rho }\cup \eta _{\gamma })=F_{Q_{\delta }}(\eta ^{\rho })+F_{Q_{\delta }}(\eta _{\gamma })$, where $\delta >\gamma $. Hence
\begin{align*}
\P ( | F_{Q_{\delta }}(\eta _{\gamma })&-F_{Q_{\delta }}(\eta _{\gamma }\cup \eta ^{\rho }) |\geqslant c )\geqslant \P ( | F_{Q_{\delta }}(\eta _{\gamma })-F_{Q_{\delta }}(\eta _{\gamma }\cup \eta ^{\rho }) |\geqslant c, | \eta ^{\rho } | =k+1) \\
\geqslant& \P ( | F_{Q_{\delta  }}(\eta ^{\rho }) | \geqslant c, | \eta ^{\rho } | =k+1)-\sum_{j\in Q_{\delta }\setminus Q_{\gamma }}\P ( | \eta \cap B(j, \|j\|-\rho +\sqrt{d}) | \leqslant k)\\
\geqslant & p-\sum_{m=\gamma }^{\infty  }\kappa m^{d-1} C_{k}(m-\rho + \sqrt{d})^{k}\exp(-\kappa (m -\rho + \sqrt{d})^{d}).
\end{align*}
For $\gamma>3\rho  $ sufficiently large (and any $\delta >\gamma $), the last term is smaller than $p/2$, hence Assumption \ref{ass:nondeg} is satisfied.
\end{proof}
 \end{example}

 \subsection{Further applications and perspectives}
 
 An important part of the paper is devoted to shot noise excursions, but the results should apply also to most stabilizing models studied in the literature (packing functionals, Voronoi tessellation, boolean models, proximity graphs), see the example of statistics on nearest neighbours graphs above. 
 
 In some models, the independent marking is replaced by  {\it geostatistical marking}, also called dependent marking or external marking:
 let $m(x;\eta '),x\in \mathbb{R}^{d}$ be a random field measurable with respect to an independent homogeneous Poisson process $\eta '$ on $\mathbb{R}^{d}$, and consider the marked process $\{(x,m(x,\eta ')),x\in \eta \}$ instead of the independently marked process. Such a refinement is necessary to model a variety of random phenomena, such as gauge measurements for rainfalls or  tree sizes in a sparse forest, see \cite{SRD} and references therein.  Labelling the points of $\eta $ and $\eta '$ with two different colors yields that $\eta \cup \eta '$ has the law of an independently marked Poisson process, hence our results could be applied to appropriate statistics.
  
In the non-marked setting ($\M $ is a singleton), let $a>0$ be a scaling parameter, and consider  the random field $X=(X_{k})_{k\in \mathbb{Z} ^{d}}$, where $X_{k}=  \mathbf{1}_{\{ a\eta \cap (k+[0,1)^{d})=\emptyset \}  },k\in \mathbb{Z} ^{d}$. $X$ is an \emph {independent spin-model} where the parameter $p=\P(X_{0}=1)=\exp(-a)$ can take any prescribed value. Then all the previous results can be applied to functionals of the form 
\begin{align*}
F_{W}(X)=\sum_{k\in \mathbb{Z} ^{d}}F_{0}(X\cap {W}-k)\text{\rm{ or }}F'_{W}(X)=\sum_{k\in \mathbb{Z} ^{d}}F_{0}(X-k),
\end{align*} 
where $F_{0}$ is some functional on the class of subsets of $\mathbb{Z} ^{d}$, with finite second moment under iid Bernoulli input. Stabilising functionals and excursions functionals yield possible  applications, our findings might apply for instance to the results of \cite{RVY}, where more general classes of discrete input than Bernoulli processes are also treated. Seeing $F_{W}$  (or $F_{W}'$) as a functional of $\eta $, the variance and asymptotic normality results of Theorems \ref{thm:ultimate}-\ref{thm:ultimate-2} apply to $F_{W}$ under conditions of the type 
\begin{align*}
(\E\left|
F_{W}(X'\cap B)-F_{W}(X'\cap B\cap B_{r})
\right|^{4})^{1/4}\leqslant C_{0}(1+r)^{-\alpha },
\end{align*}where $B,B_{r}$ are like in \eqref{thm:ultimate}, and $X'$ is obtained from $X$ by forcing up to 2 spins $X_{k},X_{k'}$ to the value $1$  (the bound has to be uniform over $k,k'\in \mathbb{Z} ^{d}$).

 \section{Moment asymptotics}
 
 In this section, we give   results for second and fourth moments of a geometric functional under general conditions of non-triviality and polynomial decay.    The fourth order moment is useful for establishing Berry-Esseen bounds in the next section. The greek letter $\kappa $ still denotes a constant depending on $d,q,\alpha ,a_{-},a_{+}$ whose value may change from line to line.
    
 \begin{theorem}
 \label{thm-variance}Let $ \alpha > d ,W\subset \mathbb{Z} ^{d}$, $C_{0}\geqslant 0$. Let  $F_{0}\in \F$. \\
  Assume {\bf (i)} that for $k\in W$, { $G_{k}^{W}=F_{k}^{W},$ (resp. {\bf (i')} for $k\in \mathbb{Z} ^{d},G_{k}^{W}=F_{k}$) and let $G_{W}=\sum_{k\in W}G_{k}^{W}=F_{W}\;(\text{\rm{resp. }}G_{W}=F_{W}')$} as defined in \eqref{FW-general-form-Fk} (resp. \eqref{FW-general-form}), and   for all $r\geqslant 0 ,  B\in \mathcal{B}_{ W}^{r}\cup \{\mathbb{R}^{d}\}$,  
  \begin{align}
  \label{ass:upper-variance-2}
  \left(
\E 
   \left|F_{0}(\eta  \cap B_{r}\cap B )-F_{0}(\eta\cap B   )
\right| ^{2}
\right)^{1/2}
 \leqslant C_{0}(1+r)^{-\alpha } 
\end{align}  
(resp.     for all $r\geqslant 0  $,  
  \begin{align}
  \label{ass:upper-variance}
  \left(
\E 
   \left|F_{0}(\eta  \cap B_{r})-F_{0}(\eta   )
\right| ^{2}
\right)^{1/2}
 \leqslant C_{0}(1+r)^{-\alpha }). 
\end{align}  Then  for $k,j\in W$ (resp. $k,j\in \mathbb{Z} ^{d}$),
\begin{align}
\label{eq:cov-bound}
 \cov(G_{j}^{W},G_{k}^{W}) &\leqslant \kappa C_{0}^{2}   (1+\|k-j\|) ^{ -\alpha }, \\
\notag\sigma _{0}^{2}: =\sum_{k\in \mathbb{Z} ^{d}}\cov(F_{0} ,F_{k}) &<\infty ,
\end{align}
and $
\sigma _{0} 
>0$ if also Assumption \ref{ass:nondeg} holds.
If $W$ is bounded and non-empty,
\begin{align}
\label{eq:variance-exact-asymp}
\left|
  | W | ^{-1}{\var(G_{W})} -\sigma _{0}^{2}
\right|&\leqslant \kappa  C_{0}^{2 }   ( | \partialZ W | / | W | )^{1-d /\alpha }.
\end{align}   If furthermore $\alpha >2d$  
\begin{align}
\label{eq:moments-upper-bound}& \E\left(
G_{W}-\E G_{W}
\right)^{4}\leqslant \kappa C_{0}(\E (F_{0}-\E F_{0})^{4})^{3/4} | W | ^{2}.
\end{align}

\end{theorem}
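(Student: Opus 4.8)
\emph{Uniform variance bound, covariance decay, and summability.} The plan is to bootstrap everything from one uniform second moment estimate. First I would show that $\var(F_0(\eta\cap B))\leqslant\kappa C_0^2$ uniformly over $B\in\mathcal{B}_W^r\cup\{\mathbb{R}^d\}$ by telescoping along the radii $B_0\subset B_1\subset B_2\subset\cdots$: since $\eta\cap B\cap B_0=\emptyset$ almost surely, $F_0(\eta\cap B)-F_0(\emptyset)=\sum_{i\geqslant 0}[F_0(\eta\cap B\cap B_{i+1})-F_0(\eta\cap B\cap B_i)]$, and Minkowski's inequality with \eqref{ass:upper-variance-2} (comparing each term to $F_0(\eta\cap B)$) bounds the $L^2$ norm by $2C_0\sum_i(1+i)^{-\alpha}=\kappa C_0$, which converges as $\alpha>d\geqslant 1$. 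Because $G_k^W$ has the same law as $F_0(\eta\cap(\tilde W-k))$ with $\tilde W-k\in\mathcal{B}_W^r$, this gives $\var(G_k^W)\leqslant\kappa C_0^2$. For \eqref{eq:cov-bound}, set $D=\|j-k\|$, choose $r\asymp D/a_+$ so that $B_r(j)$ and $B_r(k)$ are disjoint, and introduce the localisation $G_k^{W,r}=F_0((\eta\cap\tilde W-k)\cap B_r)$, which is measurable with respect to $\eta\cap B_r(k)$. Poisson independence on disjoint sets gives $\cov(G_j^{W,r},G_k^{W,r})=0$; expanding $\cov(G_j^W,G_k^W)$ around the localisations and applying Cauchy--Schwarz together with the uniform variance bound and $\|G_k^W-G_k^{W,r}\|_2\leqslant C_0(1+r)^{-\alpha}$ (from \eqref{ass:upper-variance-2} with $B=\tilde W-k$) yields $\cov(G_j^W,G_k^W)\leqslant\kappa C_0^2(1+\|j-k\|)^{-\alpha}$. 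Summing over $k\in\mathbb{Z}^d$ and using $\alpha>d$ gives $\sigma_0^2<\infty$. The infinite-input case is identical with $B=\mathbb{R}^d$ throughout, using \eqref{ass:upper-variance}.

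\emph{Variance asymptotics.} In the stationary (infinite-input) case $\cov(G_j^W,G_k^W)=c(k-j)$ with $c(m)=\cov(F_0,F_m)$, so $\var(G_W)-|W|\sigma_0^2=-\sum_m c(m)\,|W\setminus(W-m)|$. Since $|W\setminus(W-m)|\leqslant\min(\|m\|_1\,|\partialZ W|,|W|)$ (telescoping the translation into unit steps, each contributing at most $|\partialZ W|$), I would split the sum at a radius $T$, bound the near part by $|\partialZ W|\sum_{\|m\|\leqslant T}\|m\|\,|c(m)|$ and the far part by $|W|\sum_{\|m\|>T}|c(m)|$, insert the decay $|c(m)|\leqslant\kappa C_0^2(1+\|m\|)^{-\alpha}$, and optimise over $T$ to reach \eqref{eq:variance-exact-asymp}. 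In the finite-input case one additionally controls the window-boundary discrepancy $G_j^W-F_j$, which by \eqref{ass:upper-variance-2} is of order $(1+\mathrm{dist}(j,\partial\tilde W))^{-\alpha}$ and therefore contributes only a boundary term of the stated order.

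\emph{Positivity of $\sigma_0$ (the main obstacle).} This is where Assumption \ref{ass:nondeg} is used and is the delicate point. Since \eqref{eq:variance-exact-asymp} already gives $\sigma_0^2=\lim_n\var(F_{W_n})/|W_n|$, it suffices to prove $\var(F_{Q_\delta})\geqslant c\,\delta^d$. I would tile $Q_\delta$ by a coarse sublattice $L\mathbb{Z}^d$ with $L>\gamma+\rho$, producing $\asymp\delta^d$ well-separated cells $C_v=v+\tilde Q_\rho$ whose surrounding boxes $v+\tilde Q_\gamma$ are disjoint. With $\mathcal{A}_0=\sigma(\eta|_{\mathbb{R}^d\setminus\bigcup_v C_v})$ and $\mathcal{A}_v=\sigma(\eta|_{C_v})$, orthogonality of the first-order Hoeffding (ANOVA) components across the independent cells gives $\var(F_{Q_\delta})\geqslant\sum_v\E[\var(\E[F_{Q_\delta}\mid\mathcal{A}_0,\mathcal{A}_v]\mid\mathcal{A}_0)]$, and the task is a uniform lower bound on each interior term. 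On the $\mathcal{A}_0$-measurable event that the annulus $v+(\tilde Q_\gamma\setminus\tilde Q_\rho)$ is empty (probability $q>0$), the exterior plays the role of $\eta_\gamma$ and $\eta|_{C_v}$ the role of $\eta^\rho$, so Assumption \ref{ass:nondeg} (translated to $v$) says that switching $\eta|_{C_v}$ between empty and the central cluster moves the functional by at least $c$ with probability $\geqslant p$. The difficulty is that the conditional expectation \emph{averages} the add-cluster cost over the far cells; I would invoke the decay \eqref{ass:upper-variance-2} to show this cost is determined up to a small $L^2$ error by $\eta$ near $v$, so the averaging cannot wash out the spread. Handling boundary cells and matching Assumption \ref{ass:nondeg}, which is stated at the origin with window $Q_\delta$ rather than at $v$, then gives a per-cell lower bound $\geqslant\kappa c^2 p\,q$, and summing over the $\asymp\delta^d$ interior cells yields $\var(F_{Q_\delta})\geqslant c'\delta^d$, hence $\sigma_0>0$.

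\emph{Fourth moment.} Finally I would expand $\E(G_W-\E G_W)^4=\sum_{j_1,\dots,j_4\in W}\E[\bar G_{j_1}\bar G_{j_2}\bar G_{j_3}\bar G_{j_4}]$, with $\bar G_j=G_j^W-\E G_j^W$, and classify the quadruples by the largest gap $s$ splitting them into two groups. Localising at scale $\asymp s$ so the groups become independent, extracting one decay factor $(1+s)^{-\alpha}$ from a single localisation difference in $L^2$ via \eqref{ass:upper-variance-2}, and bounding the remaining factors by Hölder's inequality in terms of $\|F_0-\E F_0\|_4$, produces a per-quadruple bound of order $C_0\,\|F_0-\E F_0\|_4^3\,(1+s)^{-\alpha}$; counting quadruples with prescribed gap and using $\alpha>2d$ makes the multi-sum $O(|W|^2)$, giving \eqref{eq:moments-upper-bound}. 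The subtlety, and the reason the bound is only linear in $C_0$, is that merely the $L^2$ decay is available, so the decay gain must be allocated to one factor while the rest are controlled solely by the finite fourth moment $\E(F_0-\E F_0)^4$.
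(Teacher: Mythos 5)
Your treatment of the covariance decay, the variance asymptotics and the overall architecture of the fourth-moment bound matches the paper's proof (the paper resamples $\eta$ outside disjoint balls rather than truncating, and sums the covariance discrepancy over shells of points at distance $m$ from $\tilde W^{c}$ rather than via $|W\setminus(W-m)|$, but these are cosmetic differences). Two steps, however, contain genuine gaps.

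First, the positivity of $\sigma_{0}$. Your per-cell ANOVA bound $\var(F)\geqslant\sum_{v}\E[\var(\E[F\mid\mathcal{A}_{0},\mathcal{A}_{v}]\mid\mathcal{A}_{0})]$ is valid, but the proposal stops exactly at the hard point: showing that averaging over the other cells inside the conditional expectation does not wash out the spread guaranteed by Assumption \ref{ass:nondeg}. This can be repaired (localise the add-cluster cost $F(\eta)-F(\eta\text{ with }C_{v}\text{ emptied})$ at a radius \emph{smaller than the cell spacing}, so that the localised cost becomes $\mathcal{A}_{0}\vee\mathcal{A}_{v}$-measurable and no averaging occurs, the error being $O(C_{0}L^{d-\alpha})$ in $L^{2}$), but it is announced rather than done; and the translation issue is likewise left open: Assumption \ref{ass:nondeg} is stated for the centred window $Q_{\delta}$ with the perturbation at the origin, and applying it at a cell $v$ deep inside one large window requires an off-centre version that must itself be derived from the decay hypothesis (and only works for cells away from $\partialZ W$). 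The paper's design sidesteps both problems: it cuts $W$ into blocks of side $\delta$ so that each block functional $F^{(k)}$ is \emph{exactly} a translate of $F_{Q_{\delta}}$, conditions on the points near all block boundaries so the block interiors become independent, applies Assumption \ref{ass:nondeg} verbatim to each block on the event $\eta_{\rho}^{\gamma}=\emptyset$ via $\var(U)=\inf_{z}\E(U-z)^{2}$, and controls the inter-block conditional covariances by the same localisation estimate as in \eqref{eq:cov-bound}. You should also spell out how ``deviates by $c$ with probability $p$'' becomes a variance lower bound: this needs the second reference point $\P(\eta^{\rho}=\emptyset)>0$, on which the increment vanishes.

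Second, the fourth moment. H\"older with one $L^{2}$ factor and three $L^{4}$ factors has exponents summing to $1/2+3/4>1$ and is not a valid inequality; with a single $L^{2}$ decay factor the remaining three must be taken in $L^{6}$, which replaces $(\E(F_{0}-\E F_{0})^{4})^{3/4}$ by $(\E(F_{0}-\E F_{0})^{6})^{1/2}$ in \eqref{eq:moments-upper-bound}. To obtain the stated constant one needs the localisation difference in $L^{4}$, i.e.\ a fourth-moment version of \eqref{ass:upper-variance}; this is what the paper's proof actually writes (it bounds $(\E|\bar F_{m}-\bar F'_{m}|^{4})^{1/4}$ by $\kappa C_{0}(1+\delta)^{-\alpha}$), and it is harmless in the applications because Theorems \ref{thm:ultimate} and \ref{thm:ultimate-2} assume $L^{4}$ decay, but it does not follow from the $L^{2}$ hypothesis alone, contrary to what your closing remark asserts.
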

 
The proof is deferred to Section \ref{sec:proof-variance}.

\section{Asymptotic normality}
  We give bounds to the normal in terms of Kolmogorov distance, defined in \eqref{eq:dK-def}, or Wasserstein distance, defined between two random variables $U,V$ as 
\begin{align*}
\\d_\W  (U,V)=\sup_{h\in \text{\rm{Lip}}_{1}} | \E [h(U)]-\E[h(V) ]| ,
\end{align*}where $\text{\rm{Lip}}_{1}$ is the set of $1$-Lipschitz functions $h:\mathbb{R}\to \mathbb{R}$.

\subsection{Malliavin derivatives}
\label{sec:malliavin}

It has been shown in different frameworks \cite{EstLeo,LRP,LSY,LPS} that, through inequalities called \emph {second-order Poincar\'e type} inequalities, Gaussian fluctuations of real functionals can be controlled by some second order difference operators defined on the random input. In the Poisson setting, this operator is incarnated by the  Malliavin derivatives. We define it here as it is a central tool  in the theory backing our results: for any  functional $F\in \F,\zeta \in \N$, and $\x\in \overline{\mathbb{R}^{d}}$, define the first order Malliavin derivative  $D_{\x}F\in \F$ by
\begin{align*}
D_{\x}F(\zeta   )=F(\zeta  \cup \{\x\})-F(\zeta   ),
\end{align*} and for $\x,\y\in \overline{\mathbb{R}^{d}},\zeta  \in \N, F\in \F_{0}$, the second order Malliavin derivative  is
\begin{align*}
D^{2}_{\x,\y}F(\zeta   )=& D_{\x}(D_{\y}F(\zeta   ))=F(\zeta   \cup \{\x,\y\})-F(\zeta \cup \{\x\})-F(\zeta \cup \{\y\})+F(\zeta   ).
\end{align*} 
 
One can use this object to quantify the spatial dependency of the functional $F$: a point $\y\in \overline{\mathbb{R}^{d}} $ has a weak influence on a point $\x\in \overline{\mathbb{R}^{d}}$ for the functional $F$ if its presence hardly affects the contribution of $\x$, i.e. $D_{\x}F(\eta )\approx D_{\x}F(\eta \cup \{y\})$, or in other words $D^{2}_{\x,\y}F(\eta )=D_{\y}(D_{\x}F(\eta ))\approx 0.$
The   proof of the following theorem is based on the result of Last, Peccati and Schulte \cite{LPS}, that  asserts that the functional $F_{W}$ exhibits Gaussian behavior as $W\to \mathbb{R}^{d}$,  as soon as $D_{\x,\y}F_{W}$ is small when $\x,\y$ are far away, uniformly in $W$. The speed of decay actually yields a bound on the speed of convergence  of $F_{W}$ towards the normal.

\begin{theorem}
\label{thm:BE}Let $W\subset \mathbb{Z} ^{d} $ {bounded}.
 Let $G_{W}\in \{F_{W} ,F_{W}'\}$ as defined in  \eqref{FW-general-form-Fk}-\eqref{FW-general-form}, with $F_{0}\in \F$, and let $M,M' \sim \mu $ independent.  Assume that for some $C_{0}>0, $ either  {\bf (i)} $G_{W}=F_{W}$ and for some $\alpha >2d, $ for all $k\in W,a.a. \,x\in \tilde W ,  a.a.\, y\in  \mathbb{R}^{d}$, $\eta' \in \{\eta ,\eta \cup \{(y,M')\} \},$ 
\begin{align}
\label{eq:moment-derivative-2}
  \left[ \E
 | D_{(x,M)}F_{0}((\eta'\cap  \tilde W)-k  ) | ^{4}
\right]^{1/4}
 \leqslant C_{0}(1+\|x\|)^{-\alpha  },x\in \mathbb{R}^{d},
\end{align}
or
 {\bf (i')} $G_{W}=F_{W}' $ and for some $\alpha >5d/2$, for a.a. $ x,y\in  \mathbb{R}^{d}$, $\eta' \in \{\eta ,\eta \cup \{(y,M')\} \},$ 
\begin{align}
\label{eq:moment-derivative}
  \left[ \E
 | D_{(x,M)}F_{0}(\eta'  ) | ^{4}
\right]^{1/4}
 \leqslant C_{0}(1+\|x\|)^{-\alpha   },x\in \mathbb{R}^{d}.
\end{align} 
 Then,   $ \sigma ^{2}:=\Var(G_{W})<\infty ,$ and if $\sigma > 0$, with  $ \tilde G_{W}=\sigma ^{-1}(G_{W}-\E G_{W})$, 
\begin{align}
\label{eq:BEbound-W-2}
 d_\W  (\tilde G_{W },N)\leqslant &\kappa \left(
 {C_{0}^{2}}{\sigma ^{- 2}}\sqrt{ | W | }+ {C_{0}^{3 }}{\sigma ^{ -3}}  | W | 
\right) \left(
 1+ 
\left(
 \frac{| \partialZ W |}{ | W |}  
\right)^{a}
\right),
\end{align}where $a=0$ in case {\bf (i)}, and $a=2( \alpha/d -2)$ in case {\bf (i')}.
Let $v :=\sup_{W} (G_{W}-\E G_{W})^{4} | W | ^{-2}\in \mathbb{R}_{+}\cup \{\infty \} $, then
\begin{align}
\label{eq:BEbound-K}
 d_\K  (\tilde G_{W },N)\leqslant &\kappa \left(
C_{0}^{2}\sigma ^{-2}\sqrt{ | W | }+C_{0}^{3}\sigma ^{-3} | W | +v^{1/4}C_{0}^{3}\sigma ^{-4} | W |^{3/2}
\right)\left(
 1+ 
\left(
 \frac{| \partialZ W |}{ | W |}  
\right)^{a}
\right).
\end{align}
Recall that   \eqref{ass:upper-variance-2} (or \eqref{ass:upper-variance} in case {\bf (i')}) is a sufficient condition for $v<\infty $.
\end{theorem}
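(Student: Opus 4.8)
The plan is to derive both bounds from the second-order Poincaré inequality of Last, Peccati and Schulte \cite{LPS}, which controls $d_\W(\tilde G_W,N)$ (and, with an extra term, $d_\K$) by three functional-analytic quantities $\Gamma_1,\Gamma_2,\Gamma_3$ built from the first- and second-order Malliavin derivatives $D_\x G_W$ and $D^2_{\x,\y}G_W$, integrated against the intensity $\d\x=\d x\,\mu(\d m)$ over $\overline{\mathbb{R}^d}$. Here $\Gamma_1$ and $\Gamma_2$ are triple integrals over $(\x,\y,\z)$ whose integrands are products of factors of the form $(\E[(D^2_{\x,\z}G_W)^2(D^2_{\y,\z}G_W)^2])^{1/2}$ (and, for $\Gamma_1$, also $(\E[(D_\x G_W)^2(D_\y G_W)^2])^{1/2}$), while $\Gamma_3$ is a single-point integral involving a third or fourth moment of $D_\x G_W$. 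By Cauchy--Schwarz each such factor is dominated by a product of $L^4$-norms of derivatives, which is exactly why the hypotheses \eqref{eq:moment-derivative-2}--\eqref{eq:moment-derivative} are stated in $L^4$. First I would record $\sigma^2=\Var(G_W)<\infty$, which follows either from Theorem \ref{thm-variance} or directly from the derivative bounds through the Poincaré inequality, and then apply the inequality to the centred, normalised functional $\tilde G_W$.

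The next step is to compute the derivatives and reduce to $F_0$ by stationarity. Writing $\x=(x,M)$, in case {\bf (i)} one has $D_\x F_W=\mathbf{1}_{\{x\in\tilde W\}}\sum_{k\in W}D_{(x-k,M)}F_0(\eta\cap\tilde W-k)$, and analogously $D_\x F_W'=\sum_{k\in W}D_{(x-k,M)}F_0(\eta-k)$ in case {\bf (i')}; the indicator $\mathbf{1}_{\{x\in\tilde W\}}$ is precisely the source of the automatic spatial localisation that kills the boundary factor ($a=0$) in case {\bf (i)}. By Minkowski's inequality and \eqref{eq:moment-derivative-2} (resp. \eqref{eq:moment-derivative}), $\|D_\x G_W\|_4\leqslant C_0\sum_{k\in W}(1+\|x-k\|)^{-\alpha}$, a summable kernel since $\alpha>d$. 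For the second-order derivative I would use $D^2_{\x,\y}F_0(\zeta)=D_\x F_0(\zeta\cup\{\y\})-D_\x F_0(\zeta)$ together with the fact that \eqref{eq:moment-derivative-2}--\eqref{eq:moment-derivative} are assumed for both $\eta'=\eta$ and $\eta'=\eta\cup\{(y,M')\}$: this add-one version is exactly what lets me bound $\|D^2_{(x-k,M),(y-k,M')}F_0\|_4$ by $2C_0(1+\|x-k\|)^{-\alpha}$, and, by the symmetry $D^2_{\x,\y}=D^2_{\y,\x}$, also by $2C_0(1+\|y-k\|)^{-\alpha}$; hence by the larger of the two separations, $\|D^2_{\x,\y}G_W\|_4\leqslant 2C_0\sum_{k\in W}(1+\max(\|x-k\|,\|y-k\|))^{-\alpha}$.

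I would then insert these kernel bounds into $\Gamma_1,\Gamma_2,\Gamma_3$ and estimate the resulting multi-index sums and integrals. After bounding each expectation factor by a product of the above kernels, $\Gamma_1$ and $\Gamma_2$ become sums over tuples of lattice centres $k,k',\ldots\in W$ of convolutions of the decay kernels integrated over $(x,y,z)$; the convergence of $\sum_k(1+\|\cdot-k\|)^{-\alpha}$ and of its spatial integrals (guaranteed by $\alpha>d$, with room to spare under the stronger thresholds $\alpha>2d$, resp. $\alpha>5d/2$) collapses each to a single free sum over $W$, producing the $|W|$ factor inside the square roots and hence the $\sqrt{|W|}$ and $|W|$ orders of \eqref{eq:BEbound-W-2}. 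The Kolmogorov bound \eqref{eq:BEbound-K} carries the additional LPS term scaling like $\sigma^{-4}$ times a fourth-moment factor; bounding $\E(G_W-\E G_W)^4$ by $v|W|^2$ yields the extra summand $v^{1/4}C_0^3\sigma^{-4}|W|^{3/2}$, and \eqref{eq:moments-upper-bound} of Theorem \ref{thm-variance} shows $v<\infty$ under \eqref{ass:upper-variance-2}/\eqref{ass:upper-variance}. The boundary factor $(1+(|\partialZ W|/|W|)^a)$ enters only in case {\bf (i')}: without the indicator $\mathbf{1}_{\{x\in\tilde W\}}$ the derivatives are supported on all of $\mathbb{R}^d$, so the space integrals are no longer confined to $\tilde W$ and pick up a boundary layer whose contribution, traded against the surplus decay, is quantified by $a=2(\alpha/d-2)$.

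The main obstacle is the estimation of $\Gamma_1$ and $\Gamma_2$ in case {\bf (i')}: the second-order kernel decays only in the \emph{larger} of the two separations $\|x-k\|,\|y-k\|$, so the naive triple integral does not factor, and one must organise the sum over the three lattice centres carefully, controlling the overlap of three decay kernels sharing the common variable $\z$, to confirm both the exact $|W|$ power and the precise boundary exponent $a$. Securing the boundary contribution at the rate $(|\partialZ W|/|W|)^{2(\alpha/d-2)}$, rather than a weaker one, is the delicate point, and is where the surplus decay encoded in the threshold $\alpha>5d/2$ (rather than merely $\alpha>2d$) is consumed.
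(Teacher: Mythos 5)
Your proposal follows essentially the same route as the paper's proof: the LPS second-order Poincar\'e bound, reduction to $F_{0}$ by stationarity, the symmetric bound on $D^{2}_{\x,\y}G_{W}$ giving decay in $\max(\|x-k\|,\|y-k\|)$ (the paper phrases this as a $\min$ of the two one-sided bounds, using the hypothesis for both $\eta$ and $\eta\cup\{(y,M')\}$ exactly as you do), and the boundary-layer integration in case \textbf{(i')} that consumes the surplus decay from $\alpha>5d/2$ to produce the exponent $a=2(\alpha/d-2)$. The "main obstacle" you flag is resolved in the paper by first collapsing the lattice sum into the pointwise kernel $(1+\max(\|x-y\|,d(x,W),d(y,W)))^{d-\alpha}$ and then integrating with a co-area/dilation estimate over the set $W'=\{k:d(k,W)\leqslant d_{W}\}$ with $d_{W}=(|W|/|\partialZ W|)^{1/d}$, which is a concrete implementation of the organisation you describe.
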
 

The proof is at Section \ref{sec:BE}

 \subsection{Proof of Theorems \ref{thm:ultimate} and \ref{thm:ultimate-2}}
\label{sec:proof-main}
We prove  Theorem \ref{thm:ultimate}  (resp. Theorem \ref{thm:ultimate-2}) using Theorems \ref{thm-variance} and \ref{thm:BE}.

Let $n\geqslant 1$ be such that $W=W_{n}$ is bounded and non-empty, $G_{W}=F_{W}$ (resp. $G_{W}=F'_{W}$), $\sigma ^{2}=\var(G_{W})$.  Assumption \eqref{eq:ultimate-assumption-bis} (resp. \eqref{eq:ultimate-assumption}) clearly implies \eqref{ass:upper-variance-2} (resp. \eqref{ass:upper-variance}), and therefore  \eqref{eq:variance-exact-asymp} holds:\begin{align*} 
\left|
  | W | ^{-1}{\sigma ^{2}} -\sigma _{0}^{2}
\right|&\leqslant \kappa  C_{0}^{2 }   ( | \partialZ W | / | W | )^{1-d /\alpha }.
\end{align*} Let $y\in \mathbb{R}^{d},k \in W,x\in \tilde W-k,\x=(x,M), \eta' \in \{\eta ,\eta \cup \{(y,M')\}\}$  as in  \eqref{eq:moment-derivative-2} (resp. \eqref{eq:moment-derivative}), $\eta ''=\eta '\cup \{\x\},B=\tilde W-k\text{\rm{ (resp. }}B=\mathbb{R}^{d}),r= \|x\| / a_{+}.$ Note that $x\in B\setminus  B_{r}$, hence
\begin{align*}
 D_{\x}F_{0}(\eta '\cap B)   =&   F_{0}((\eta '\cap B)\cup \{\x\})-F_{0}(\eta '\cap B)   \\
 =&F_{0}((\eta '\cup \{\x\})\cap B)-F_{0}(\eta '\cap B)\\
 =&F_{0}((\eta '\cup \{\x\})\cap B)-F_{0}((\eta '\cup \{\x\})\cap B\cap B_{r})+F_{0}((\eta '\cup \{\x\})\cap B\cap B_{r})-F_{0}(\eta '\cap B)\\
 =&F_{0}(\eta ''\cap B)-F_{0}(\eta ''\cap B\cap B_{r})+F_{0}(\eta '\cap B\cap B_{r})-F_{0}(\eta '\cap B).
\end{align*}
Applying  \eqref{eq:ultimate-assumption-bis} (resp. \eqref{eq:ultimate-assumption}) twice  with $x_{1}=x,x_{2}=y $ yields
\begin{align*}
\left(
\E | D_{\x}F_{0}(\eta '\cap B) | ^{4}
\right)^{1/4}\leqslant C_{0}(1+r)^{-\alpha},
\end{align*}
 hence \eqref{eq:moment-derivative-2}  (resp. \eqref{eq:moment-derivative}) holds, and \eqref{eq:BEbound-W-2} holds. Since furthermore Assumption \ref{ass:nondeg} holds, Theorem \ref{thm-variance}  yields $\sigma _{0}>0$, and for $n$ sufficiently large,
 $
\sigma ^{-2}\leqslant  2| W |^{-1}  \sigma _{0}^{-2}
 ,$ hence, with $\tilde G_{W}:=(G_{W}-\E G_{W})(\var G_{W})^{-1/2}$, for $n$ sufficiently large,  using also \eqref{eq:ball-like},
\begin{align*}
d_{\W}(\tilde G_{W},N)\leqslant \kappa | W | ^{-1/2}\left(
{C_{0}^{2}}{\sigma _{0}^{-2}} +{C_{0}^{3}}{\sigma _{0}^{-3}}
\right).
\end{align*} Since  \eqref{eq:ultimate-assumption-bis} (resp. \eqref{eq:ultimate-assumption}) holds with $\alpha >2d$, we have furthermore by \eqref{eq:moments-upper-bound}:\begin{align*}
 &v=\limsup_{n\geqslant 1}  \E\left(
G_{W_{n}}-\E G_{W_{n}}
\right)^{4}/ | W _{n}| ^{2}\leqslant \kappa C_{0}(\E (F_{0}-\E F_{0})^{4})^{3/4} .
\end{align*}  Applying \eqref{eq:ultimate-assumption-bis} with $r=0,B=\mathbb{R}^{d}$ gives $\E (F_{0}-\E F_{0})^{4}\leqslant \kappa C_{0}^{4}$. The bound on Kolmogorov distance \eqref{eq:ultimate-dK} (resp. \eqref{eq:ultimate-dK-2}) follows easily by \eqref{eq:BEbound-K}.

It remains to prove that   $  G'_{W} :=(\sigma _{0}^{2} | W | )^{-1/2}( G_{W} -\E G_{W} )$ follows a central limit theorem. We achieve it by proving that its Wasserstein distance to the normal goes to $0$. The triangular inequality yields
\begin{align*}
d_{{\W}}(  G_{W}' ,N)\leqslant &\E\left|
 G'_{W}-\tilde G_{W}
\right|+d_{{\W}}(\tilde G_{W},N) \\
\leqslant &\left|
\frac{1}{\sigma _{0} \sqrt{ | W |} }-\frac{1}{\sqrt{\var(G_{W})}}
\right|\E\left|
G_{W}-\E G_{W}
\right| +d_{{\W}}(\tilde G_{W},N)\end{align*}
which indeed goes to $0$ by \eqref{eq:variance-exact-asymp}.

\section{Application to shot-noise processes}
\label{sec:SN}
    Let the notation of the introduction prevail. For the process $f_{\eta }$ (see  \eqref{def-SN}) to be well defined,  assume throughout the section that for some $\tau  >0$,
\begin{align}
\label{eq:exist-SN}
\int_{\M}\int_{B(0,\tau )^{c}} | g_{m}(x) | dx\mu (dm)<\infty ,
\end{align} and let $\N_{0}$ be the class of locally finite  $\zeta $ such that $\sum_{(x,m)\in \zeta  } | g_{m}(x) |<\infty ,x\in \mathbb{R}^{d}. $ The fact that $\eta \in \N_{0}$ a.s. follows from the Campbell-Mecke formula.

We study in this section the behaviour of functionals of the excursion set $\{f_{\eta }\geqslant u\},u\geqslant 0$. We use the general framework of {random measurable sets}. A {\it random measurable set} is a random variable taking values in the space $\mathcal{M}_d $ of measurable subsets of $\mathbb{R}^{d}$, endowed with  the Borel $\sigma $-algebra $\mathcal{B}(\mathcal{M}_d )$ induced by the local convergence in measure, see Section 2 in \cite{GalLac}. Regarding the more familiar setup of random closed sets,   in virtue of Proposition 2 in \cite{GalLac},    a random measurable set
which realisations are a.s. closed can be assimilated to a random closed set. 
\renewcommand{\L}{\mathcal{L}}

 \subsection{Volume of excursions}
\label{sec:SN-volume}

For $u\in \mathbb{R}$ fixed$,W\subset \mathbb{Z} ^{d},\zeta \in \N$, define
\begin{align*}
F_{W}(\zeta )=\ell^d(\{f_{\zeta \cap \tilde W}\geqslant u\}\cap \tilde W),\;\;
F'_{W}(\zeta )=\ell^d(\{f_{\zeta }\geqslant u\}\cap \tilde W).
\end{align*} 
A central limit theorem  for the volume of a certain family of shot noise excursions has been derived in \cite{BST}, under the assumption that $f_{\eta  }( 0  )$ has a uniformly bounded density and $\int|  g_{m}(x) |\mu (dm) $ decreases  sufficiently fast as $\|x\|\to\infty $, using the associativity properties of non-negative shot-noise fields. In some specific cases, the bounded density can be checked manually with computations involving the Fourier transform. In this section, we refine this result in several ways:
\begin{itemize}
\item A general model of random function is treated, it can in particular take negative values, allowing for compensation mechanisms (see \cite{Lan02}). For $u >0$, to avoid trivial cases we assume
\begin{align}
\label{eq:non-trivial}
\mu (\{m\in \M :g_{m} \geqslant 0  \})\neq 0.
\end{align}
\item The precise variance asymptotics are derived.
\item Weaker conditions are required for the results to hold, in particular  bounded density is not needed.
\item The likely optimal rate of convergence in Kolmogorov distance towards the normal is given.
\item Boundary effects under finite input are considered, in the sense that only points falling in a bounded window (growing to infinity) contribute to the field. The case of infinite input is also treated.
\end{itemize}
The application to shot noise excursions is a nice illustration of the versatility of the general method derived in this article. We give examples of fields with no marginal density to which the results apply, such as  sums of indicator functions, or   of kernels with a singularity in $0$. Controlling the density of shot-noise fields is in general crucial for deriving results on fixed-level excursions.   The case of indicator kernels is treated in Section \ref{sec:sn-general}.

% The decay assumption is of the following form: 
%for some $\lambda  ,c>0$,  
%\begin{align}
%\label{eq:general-decay-SN}
%\E  | g_{M} (x)| \leqslant c\|x\|^{-\lambda  },\|x\|\geqslant 1.
%\end{align}  
% 

\begin{assumption}
\label{ass:SN-g}
Let $f_{\eta }$ be of the form \eqref{eq:sn-g} with $g$ such that $ | g(x) | \leqslant c\|x\|^{-\lambda },\|x\|\geqslant 1$ for some $\lambda >11d,c>0$. Assume that there is $\varepsilon >0,c>0$ such that  
\begin{align}
\label{eq:tail-cond-SN-volume}
 \int_{0}^{ r}\frac{\rho ^{-2}\wedge \rho ^{2(d-1)}}{-g'(\rho )}d\rho \leqslant c\exp(c r^{d-\varepsilon }),r>0.
\end{align}
\end{assumption}

Lemma \ref{lm:sn-density} below yields that if $f_{\eta }$ satisfies this assumption, we can somehow control its density: for   $a\in (0,1)$    there is $ c_{a}>0$ such that  
\begin{align}
\label{eq:density}
 \sup_{v\in \mathbb{R}  , \delta >0 } \P(f_{\eta  }(0   )\in (v-\delta ,v+\delta )  )\leqslant c_{a}\delta ^{a}. 
\end{align}    This result might be of independent interest, and is proved after Lemma \ref{lm:sn-density}. 
%Actually,  Lemma \ref{lm:sn-density}  allows for a more general but more abstract   set-up than that imposed by \eqref{eq:sn-g}, but we don't develop it here for the sake of simplicity, see the comments after the proof of Theorem \ref{thm:SN-vol}. It authorizes  the impulse function to be random and have its shape or its orientation vary,  see for instance the anisotropic models of  {\it procedural noise} used in \cite{GaborNoise} for texture synthesis.
 Here are examples of functions fulfilling Assumption \ref{ass:SN-g} (and hence satisfying \eqref{eq:density}), note that nothing prevents $g$ from having a singularity in $0$.

\begin{example}
\label{ex:sn}
Theorem \ref{thm:SN-vol} below applies in any dimension to $g(\rho )=C  \rho ^{-\nu    }\mathbf{1}_{\{\rho \leqslant 1\}}+ g_{1}(\rho )\mathbf{1}_{\{\rho >1\}},\rho >0$  and $g_{1}(\rho )$ is for instance of the form $\exp(-a \rho ^{\gamma })$ or $\rho ^{-\lambda   }$, with $a, \nu  >0,\lambda >11d,\gamma <d,C>0$. Such fields don't necessarily have a finite first-order moment, and  are used for instance in \cite{BaccelliBiswas} to approximate stable fields, or  for modeling telecommunication networks.   
\end{example}

To give results in the case where boundary effects are considered, we need an additional hypothesis on the geometry of the underlying family of windows $\W=\{W_{n};n\geqslant 1\}$. For $\theta >0$, let $\mathcal{C}_{\theta }$ be the family of cones $C\subset \mathbb{R}^{d}$ with apex $0$ and aperture $\theta $, i.e. such that $\mathcal{H}^{d-1}(C\cap \mathcal{S}^{d-1})\geqslant \theta $. Let $\mathcal{C}_{\theta ,R}=\{C\cap B(0,R):C\in \mathcal{C}_{\theta }\}$ for $R\geqslant 0.$ Say that $\W$ has aperture $\theta >0$ if for all $W\in \W$ with diameter $r>0$, $W$ has {\it aperture $\theta$ }: for $x\in \tilde W$,   there is   $ C\in \mathcal{C}_{\theta,\ln(r)^{1/2d}  }$ such that 
 $
 (x+C)\subset \tilde W.$
% The factor $1/2$ is arbitrary here and could be replaced by any strictly positive number (proofs would have to be adapted).  

% Let us give an adapted version of Assumption \ref{ass:SN-volume}.
% 
% 
%\begin{assumption}
%\label{ass:SN-volume-bd}Let $M$ be a random variable with law $\mu .$ There is $a\in (0,1] , c>0,  r_{0}>0$, such that for $\delta >0,r\geqslant r_{0},$ 
%\begin{align*}
%\sup_{v\in \mathbb{R} ,C\in \mathcal{C}_{\theta ,r} }\P(f_{\eta \cap C}(0  )\in [v-\delta ,v+\delta ] \; | \; \eta  \cap C \neq \emptyset )\leqslant c\delta ^{a},x\in B,
%\end{align*}  and $\P(f_{\eta }(0 )>0)>0.$
%\end{assumption} 
  \begin{theorem}
\label{thm:SN-vol}
Let $u> 0$. Let $G_{W}= F'_{W} $, or 
$G_{W}=F_{W}$ if $\W$ is assumed to have aperture $\theta>0 $.
Assume   that    Assumption \ref{ass:SN-g}
% (or condition \eqref{eq:density})  
holds. Then as $ | \partialZ W | / | W | \to 0$, $\var(G_{W})\sim \sigma _{0}^{2} | W | $, $(G_{W}-\E G_{W})(\sigma _{0}\sqrt{ | W| })^{-1}$ satisfies a central limit theorem, with
\begin{align}
\label{eq:sigma0-volume}
\sigma _{0}^{2}=\int_{\mathbb{R}^{d}}\left[
\P(f_{\eta }(0 )\geqslant u,f_{\eta }(x)\geqslant u)-\P(f_{\eta }(0 )\geqslant u)^{2}
\right]dx>0.
\end{align}
Also,   the convergence rate \eqref{eq:BEbound-W-2}   in Kolmogorov distance  holds for  $\tilde G_{W}$.
\end{theorem}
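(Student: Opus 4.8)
The plan is to verify the hypotheses of Theorem~\ref{thm:ultimate} (for $G_W=F_W'$) or Theorem~\ref{thm:ultimate-2} (for $G_W=F_W$ under the aperture assumption), namely the fourth-moment decay condition \eqref{eq:ultimate-assumption} (resp.~\eqref{eq:ultimate-assumption-bis}) and the non-degeneracy Assumption~\ref{ass:nondeg}, and then read off the central limit theorem and Berry--Esseen bound directly from those theorems. The functional here is $F_0(\zeta)=\ell^d(\{f_\zeta\ge u\}\cap\tilde Q_1)$, and the key difference $F_0((\eta\cup\zeta)\cap B_r)-F_0(\eta\cup\zeta)$ measures how much the excursion volume inside the unit cell changes when points of the field outside $B_r$ are suppressed. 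The expected main obstacle is controlling this volume difference: I would bound it by the Lebesgue measure of the set of $y\in\tilde Q_1$ for which truncating the field at radius $r$ pushes $f$ across the level $u$, and then take expectations.

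First I would set up the truncation estimate. For $y\in\tilde Q_1$, write $\Delta_r(y)=f_{\eta\cup\zeta}(y)-f_{(\eta\cup\zeta)\cap B_r}(y)=\sum_{(x,m)\in(\eta\cup\zeta)\cap B_r^c}g_m(y-x)$, the tail contribution from far-away points. The symmetric difference between the two excursion sets at $y$ is non-empty only if $f_{\eta\cup\zeta}(y)$ and $f_{\eta\cup\zeta}(y)-\Delta_r(y)$ straddle $u$, which forces $f_{\eta\cup\zeta}(y)\in(u-|\Delta_r(y)|,\,u+|\Delta_r(y)|)$. Hence
\begin{align*}
\left|F_0((\eta\cup\zeta)\cap B_r)-F_0(\eta\cup\zeta)\right|
\leqslant \int_{\tilde Q_1}\mathbf{1}_{\{f_{\eta\cup\zeta}(y)\in(u-|\Delta_r(y)|,\,u+|\Delta_r(y)|)\}}\,dy.
\end{align*}
To get the fourth moment I would raise this to the power $4$, use H\"older to distribute over four points $y_1,\dots,y_4\in\tilde Q_1$, and bound each indicator expectation using the density control \eqref{eq:density}: conditionally on a suitable decomposition of the field, $\P(f_\eta(y)\in(u-\delta,u+\delta))\leqslant c_a\delta^a$ for any $a\in(0,1)$. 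The tail magnitude $|\Delta_r(y)|$ is controlled via the decay $|g(x)|\leqslant c\|x\|^{-\lambda}$ with $\lambda>11d$: a Campbell-type computation shows $\E|\Delta_r(y)|^p$ decays like a power of $r$, and combined with the density bound this yields the $(1+r)^{-\alpha}$ rate in the fourth moment. The condition $\lambda>11d$ is precisely what is needed to make the resulting exponent $\alpha$ exceed $2d$ (resp.~$5d/2$), and the aperture assumption enters when $B\in\mathcal{B}_{\W}^r$ is a truncated window rather than all of $\mathbb{R}^d$, guaranteeing enough of the cone $x+C$ lies inside $\tilde W$ so that boundary truncation does not destroy the decay.

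The delicate points are twofold. The first is justifying \eqref{eq:density} itself, which is the content of Lemma~\ref{lm:sn-density}; I would invoke it as a black box, noting that Assumption~\ref{ass:SN-g} (in particular the integrability condition \eqref{eq:tail-cond-SN-volume} on $1/(-g')$) is exactly tailored to produce the H\"older-type density bound $c_a\delta^a$ uniformly in the level and in added points. The second is the non-degeneracy Assumption~\ref{ass:nondeg}, which I would establish by exhibiting a configuration $\eta^\rho$ of points near the origin that, with positive probability, changes the excursion volume by a fixed amount $c>0$ even in the presence of the far field $\eta_\gamma$; here \eqref{eq:non-trivial} (some grain with $g_m\ge0$) ensures a single well-placed point can raise $f$ above $u$ on a set of positive measure, and the polynomial decay makes the interaction between $\eta^\rho$ and $\eta_\gamma$ controllable for $\gamma$ large, so the local volume change persists with positive probability. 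Finally, the strict positivity $\sigma_0^2>0$ in \eqref{eq:sigma0-volume} follows from Theorem~\ref{thm-variance} once Assumption~\ref{ass:nondeg} is verified, and the explicit covariance formula \eqref{eq:sigma0-volume} is obtained by specializing the general variance expression to $F_0=\ell^d(\{f_\cdot\ge u\}\cap\tilde Q_1)$ via Fubini, writing $\sigma_0^2=\int_{\mathbb{R}^d}\cov(\mathbf{1}_{\{f_\eta(0)\ge u\}},\mathbf{1}_{\{f_\eta(x)\ge u\}})\,dx$.
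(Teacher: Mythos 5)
Your proposal follows essentially the same route as the paper: reduce to verifying \eqref{eq:ultimate-assumption}/\eqref{eq:ultimate-assumption-bis} and Assumption \ref{ass:nondeg}, bound the excursion-volume difference by the measure of the set of $y\in\tilde Q_1$ where the truncation tail $\Delta_r(y)$ pushes the field across the level $u$, control that probability by combining the polynomial tail of $g$ with the density estimate of Lemma \ref{lm:sn-density} applied (after conditioning) to a near-field piece independent of $\Delta_r$, and establish non-degeneracy via a local configuration that raises the field above $u$ on a set of positive measure while the far field stays small. The one loose point is that the paper cannot use the uniform bound \eqref{eq:density} for the full field directly: in the finite-input case the conditionally independent piece lives only on a cone of radius $\sim\ln(r)^{1/2d}$ inside $B$, so it is the cone version of Lemma \ref{lm:sn-density} (with the $\exp(cR^{d-\varepsilon})$ factor absorbed by the polynomial decay of $\E\,\delta_{r,0}$) that must be invoked --- which is exactly where the aperture hypothesis enters, as you correctly anticipate, so the argument is sound.
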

  This result requires $f$ to be under the form \eqref{eq:sn-g} mainly because of the density estimates provided by Lemma \ref{lm:sn-density}, but under general density assumptions, it could apply to more general models of the form \eqref{def-SN}.
 Let us state a lemma that will be required in the proof, and in other results concerning the non-triviality of shot-noise excursions.
 
\begin{lemma}
\label{lm:nontrivial-levelset} Let $f_{\eta }$ be of the form \eqref{def-SN}.
Assume that
\begin{align}
\label{eq:ass-nontrivial-volume}
\text{\rm{for some }}M\subset \{m\in \M:\ell^d(g_{m}^{-1}((0,\infty )))>0\},\;\;\;\mu (M)>0.
\end{align}  
Then there is   $\rho  >1$ such that  for $\beta \geqslant 1 $, $
\E(\ell^d(\{f_{\eta ^{\rho }}> u\}\cap \tilde Q_{\beta   }))>0.
$
\end{lemma}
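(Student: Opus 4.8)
The plan is to reduce to the smallest window and then exhibit a single positive‑probability configuration producing an excursion of positive volume. Since $\tilde Q_1\subseteq \tilde Q_\beta$ for $\beta\geqslant 1$ and the set $\{f_{\eta ^{\rho }}>u\}$ depends only on $\eta ^{\rho }=\eta \cap \tilde Q_\rho$ (not on $\beta $), we have $\ell^d(\{f_{\eta ^{\rho }}>u\}\cap \tilde Q_\beta )\geqslant \ell^d(\{f_{\eta ^{\rho }}>u\}\cap \tilde Q_1)$, so it suffices to find $\rho >1$ with $\E\,\ell^d(\{f_{\eta ^{\rho }}>u\}\cap \tilde Q_1)>0$. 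By Tonelli this expectation equals $\int_{\tilde Q_1}\P(f_{\eta ^{\rho }}(y)>u)\,dy$, and hence it is enough to prove $\P(f_{\eta ^{\rho }}(y)>u)>0$ for every $y\in \tilde Q_1$.

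First I would extract a uniform threshold and a positive‑measure set of favourable offsets. Writing $h_c(z)=\mu(\{m\in M:g_m(z)\geqslant c\})$, assumption \eqref{eq:ass-nontrivial-volume} together with Tonelli gives $\int_{\mathbb{R}^d}\mu(\{m\in M:g_m(z)>0\})\,dz=\int_M \ell^d(g_m^{-1}((0,\infty)))\,\mu(dm)>0$, since the integrand on the right is strictly positive on $M$ and $\mu(M)>0$. Therefore $\{z:\mu(\{m\in M:g_m(z)>0\})>0\}$ has positive Lebesgue measure. Decomposing this set along rational values of $c$ and $\delta$ and using countable additivity, one obtains constants $c_0,\delta_0>0$ such that $Z:=\{z:h_{c_0}(z)\geqslant \delta_0\}$ has positive measure. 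Fixing $L$ large enough that $Z':=Z\cap B(0,L)$ still has positive measure, I choose an integer $N\geqslant 1$ with $Nc_0>u$ and set $\rho >2L+2$; then $Z'\subseteq y-\tilde Q_\rho$ for every $y\in \tilde Q_1$, because $|y_j-z_j|\leqslant 1/2+L<\rho /2$ for $z\in Z'$.

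Finally I would produce the configuration. Condition on $E_N=\{\eta ^{\rho }\text{ has exactly }N\text{ points}\}$, which has probability $e^{-\rho ^{d}}\rho ^{dN}/N!>0$; on $E_N$ the points $(x_1,m_1),\dots,(x_N,m_N)$ are i.i.d. with $x_i$ uniform on $\tilde Q_\rho$ and $m_i\sim \mu $, independently. For fixed $y\in \tilde Q_1$ the variables $V_i:=g_{m_i}(y-x_i)$ are i.i.d., and
$$\P(V_i\geqslant c_0)=\rho ^{-d}\int_{y-\tilde Q_\rho}\mu(\{m:g_m(z)\geqslant c_0\})\,dz\geqslant \rho ^{-d}\int_{Z'}h_{c_0}(z)\,dz\geqslant \rho ^{-d}\delta_0\,\ell^d(Z')>0,$$
using $Z'\subseteq y-\tilde Q_\rho$ and $h_{c_0}\geqslant \delta_0$ on $Z$. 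By independence all $N$ of them are $\geqslant c_0$ with positive probability, and on that event $f_{\eta ^{\rho }}(y)=\sum_{i=1}^{N}V_i\geqslant Nc_0>u$. Hence $\P(f_{\eta ^{\rho }}(y)>u)\geqslant \P(E_N)\,(\rho ^{-d}\delta_0\ell^d(Z'))^{N}>0$, which completes the argument.

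The main obstacle is that the marks are drawn from an abstract probability space that may have no atoms, so one can neither force several points to share a mark nor directly control where different $g_{m_i}$ are simultaneously positive. This is circumvented by fixing the evaluation point $y$ \emph{before} the points are sampled and letting each point contribute at least $c_0$ at that same $y$ independently; the uniform threshold $c_0$ and the positive‑measure offset set $Z$ produced by the Fubini/layer‑cake argument are precisely what guarantee that each single‑point contribution has positive probability.
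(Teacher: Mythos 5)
Your proof is correct and follows essentially the same route as the paper's: both reduce to the unit cube via Tonelli, condition on $\eta ^{\rho }$ having exactly $N$ points, and use a Fubini argument on $(m,z)\mapsto g_{m}(z)$ to show that each point independently contributes at least a fixed positive amount at the fixed evaluation point with positive probability. Your extraction of the threshold $c_{0}$ and the offset set $Z$ is just the dual (offset-side rather than mark-side) unpacking of the paper's ``basic measure theory'' step, and the uniform lower bound $\delta _{0}$ is not strictly needed since one could simply integrate $h_{c_{0}}$ over the set where it is positive.
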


\begin{proof} 
Basic measure theory yields $\varepsilon >0,\rho  >1$ such that 
\begin{align*}
\mu (\{m:\ell^d(g_{m}^{-1}((\varepsilon ,\infty ))\cap \tilde Q_{\rho -1})>0\})>0.
\end{align*}Let $t\in \tilde Q_{1},k>u/\varepsilon $, and $X_{i}=(Y_{i},M_{i}),i\leqslant k$ iid couples of $\tilde Q_{\rho }\times \M$,  
  and $U_{i}:=g_{M_{i}}(t-Y_{i})$. We have $\P(U_{1}\geqslant \varepsilon )>0$, hence 
\begin{align*}
\P(f_{ \eta ^{\rho }}(t)\geqslant u)\geqslant &\P(f_{ \eta ^{\rho }}(t)\geqslant k\varepsilon )
%=&\sum_{k=0}^{\infty }\P(U_{1}+\dots +U_{k}\geqslant n\varepsilon )\P( | \eta ^{\rho  } | =k)\\
\geqslant\P ( | \eta ^{\rho }\cap \tilde Q_{\rho } | =k) \P(U_{1}\geqslant \varepsilon )^{k} >0.
\end{align*}
Then Fubini's theorem yields for $\beta \geqslant 1 $
\begin{align*}
\E\ell^d&(\{t\in \tilde Q_{\beta   }:f_{  \eta ^{\rho } }(t)\geqslant u\})\\
\geqslant& \E\ell^d(\{t\in \tilde Q_{1}:f_{  \{X_{1},\dots ,X_{k}\}}(t)\geqslant u\})\P ( | \eta  ^{\rho } | =k )>0.
\end{align*}\end{proof} 

\begin{proof}[Proof of Theorem  \ref{thm:SN-vol}]

The decay assumption on $g$ yields that \eqref{eq:exist-SN} holds for  $\tau =1,$ and the left hand member of \eqref{eq:ultimate-assumption} is uniformly bounded for $r\leqslant2 \sqrt{d} $. From now on we take $r> 2 \sqrt{d} .$  We wish to prove the the conditions of Theorems \ref{thm:ultimate} and \ref{thm:ultimate-2} are satisfied with the functional $F_{0}(\zeta )=\int_{\tilde Q_{1}}\mathbf{1}_{\{f_{\zeta }(t)\geqslant u\}}dt.$  
Let us start by proving Assumption \ref{ass:nondeg}. 
Let $M=\{m\in \M:g_{m}\geqslant 0\} $. For $\rho >0,$ let $\Gamma _{\rho }$ be the event that $\eta ^{\rho }\subset \tilde Q_{\rho }\times  M$ (i.e. all functions of $\eta ^{\rho }$ are non-negative). Since $\mu (M)>0$, $\P (\Gamma _{\rho })>0$.
 Lemma  \ref{lm:nontrivial-levelset} yields $p>0, \rho  >1$ such that for $t\in \tilde Q_{1 },$
 $
\P (f_{\eta ^{\rho }}(t)>2u| \Gamma _{\rho })\geqslant p$. Also $\E  | f_{\eta _\gamma }(t) | \to 0$ as $\gamma \to \infty $  uniformly in $t\in \tilde Q_{1},$ hence for $\gamma $ sufficiently large, $t\in \tilde Q_{1},
\P ( | f_{\eta _\gamma }(t) | <u )>\frac{1}{2}.$ Conditionaly on $\Gamma _{\rho }$, $f_{\eta _{\gamma }\cup \eta ^{\rho }}=f_{\eta ^{\rho }}+f_{\eta _{\gamma }}\geqslant f_{\eta _{\gamma }}$.
Hence, for $\delta >\gamma >\rho $,  
\begin{align*}
\mathbf{1}_{\{\Gamma _{\rho }\}}\left|
F_{Q_{\delta }}(\eta _{\gamma }\cup \eta ^{\rho })-F_{Q_{\delta }}(\eta _{\gamma })
\right|=\mathbf{1}_{\{\Gamma _{\rho }\}} \int_{\tilde Q_{\delta }}\mathbf{1}_{\{f_{\eta ^{\rho }\cup \eta _{\gamma }}(t)>u ,f_{ \eta _{\gamma }}(t)<u\}}dt
%\\
%\geqslant \mathbf{1}_{\{\Gamma _{\rho }\}} \int_{\tilde Q_{\delta }}\mathbf{1}_{\{ | f_{\eta _{\gamma }\cup \eta ^{\rho }}(t) | >u, | f_{\eta _{\gamma }}(t) | <u\}}dt
&\geqslant \mathbf{1}_{\{\Gamma _{\rho }\}}G\\
 \text{\rm{where }} G:= \int_{\tilde Q_{1 }}\mathbf{1}_{\{ | f_{\eta _{\gamma }} (t)| <u,f_{\eta ^{\rho }}(t)>2u\}}dt .\\
\E [G | \Gamma _{\rho }]\geqslant \int_{\tilde Q_{1  }}\P (  f_{\eta ^{\rho }}(t)  >2u | \Gamma _{\rho })\P ( | f_{\eta  _{\gamma } }(t) | <u)dt&\geqslant \frac{p }{2}.
\end{align*}Since $G\leqslant 1$,  $\P (G\geqslant p/4 | \Gamma _{\rho })\geqslant p/4 >0$. Hence $\P ( | F_{Q_{\delta }}(\eta _{\gamma }\cup \eta ^{\rho })-F_{Q_{\delta }}(\eta ^{\rho }) | >c)\geqslant \P (\Gamma _{\rho }) \P (G\geqslant p/4 | \Gamma _{\rho }) =:p'>0$, hence 
 Assumption \ref{ass:nondeg} is satisfied.

Let us now prove that  \eqref{eq:ultimate-assumption-bis} holds in the case $G_{W}=F_{W} $ (or \eqref{eq:ultimate-assumption} in the case $G_{W}=F_{W}'$).
Let $x_{1},x_{2}\in \mathbb{R}^{d},$ $M_{1},M_{2}$ independent marks with law $\mu $, $r \geqslant 0,$ $\zeta \subset \{(x_{1},M_{1}),(x_{2},M_{2})\},\eta '=\eta \cup \zeta $. Let $B=\mathbb{R}^{d}$ in the case of infinite input ($G_{W}=F_{W}'$), and let $ B \in \mathcal{B}_{\W}^{r}$ otherwise ($G_{W}=F_{W}$). 
Jensen's inequality yields
\begin{align*}
\left|
F_{0}(\eta '  \cap B)-F_{0}(\eta' \cap  B_{r}\cap B)
\right|^{4}=&\left[
\int_{\tilde Q_1}\left(
\mathbf{1}_{\{f_{\eta '\cap B}(t)\geqslant u\}}-\mathbf{1}_{\{f_{\eta '\cap B_{r}\cap B}(t)\geqslant u\}}
\right)dt
\right]^{4}\\
\leqslant &\int_{\tilde Q_1}\left|
\mathbf{1}_{\{f_{\eta '\cap B}(t)\geqslant u\}}-\mathbf{1}_{\{f_{\eta '\cap B_{r}\cap B}(t)\geqslant u\}}
\right|dt ,
\end{align*}
 and for $t\in \tilde Q_1,r>2\sqrt{d} ,$
\begin{align*}
\left|
f_{\eta '\cap B}(t)-f_{\eta '\cap B_{r}\cap B}(t)
\right|=&\left|
\sum_{\x=(x,m)\in (\eta '\cap B) \setminus  B_{r}}
g_{m}(t-x)\right|   \\
\leqslant &\delta _{r,t }:=\sum_{\x=(x,m)\in \eta'\setminus  B(t,a_{-}(r-\sqrt{d}))} | g_{m}(t-x) |.
\end{align*}Note that $\delta _{r,t}$ is independent from $\eta \cap B(t,a_{-}r/2)$ and its law does not depend on $t\in \tilde Q_1$. Since $B=\tilde Z$ for some $Z\subset \mathbb{Z} ^{d}$ and $0\in B, t\in B$. Let $ R=1\wedge \ln(a_{-}r)^{ \frac{1 }{d-\varepsilon/2 }}$, where $\varepsilon $ is from Assumption \ref{ass:SN-g}. Since $B$ intersects $B_{r}^{c}$,  it has diameter at least $a_{-}r$ and since $W$ has aperture $\theta $,   there is a solid cone $C_{t}\in \mathcal{C}_{\theta ,R}$ such that, with $D_{t}=(C_{t}+t) $, $D_{t}\subset B$. In the infinite input case, the latter trivially holds with $B=\mathbb{R}^{d},\theta =\sigma _{d-1}:=\mathcal{H}^{d-1}(\mathcal{S}^{d-1}),D_{t}=B(t,R)$.
We have 
\begin{align}
\notag\E\Big|
F_{0} (\eta '&\cap B) - F_{0}(\eta '\cap B_{r}\cap B)
\Big|^{4}\leqslant \sup_{t\in \tilde Q_1}\P(f_{\eta '\cap B}(t)\in [u-\delta_{r,t },u+\delta_{r ,t}])\\
\notag\leqslant &\sup_{t\in \tilde Q_1}\P\left(
f_{\eta \cap D_{t}}(t)+f_{\eta \cap (B\setminus   D_{t})}(t) +f_{\zeta \cap B}(t)\in [u-\delta _{r,t},u+\delta _{r,t}]\right)\\
\notag\leqslant &\sup_{t\in \tilde Q_1}\E(\P (f_{\eta \cap D_{t}}(t)\in [u-f_{\eta \cap (B\setminus D_{t})}(t)-f_{\zeta\cap B }(t)\pm \delta _{r,t}] \;|\; \sigma (\zeta ,\eta \cap (B\setminus D_{t}))))\\
\notag\leqslant &\sup_{t\in \tilde Q_1}\E\left(\sup_{v\in \mathbb{R}}\P (
f_{\eta \cap D_{t}}(t) \in [v-\delta _{r,t},v+\delta _{r,t}]\; |\; \sigma (\delta _{r,t}))
\right)\\
%\notag\leqslant &\sup_{v\in \mathbb{R},C\in \mathcal{C}_{\theta ,a_{-}r/2}}\P\left(
%f_{\eta \cap C}(0) \in [v-\delta _{r,0},v+\delta _{r,0}])
%\right)\\
\label{eq:tructruc}\leqslant &\E\left(\sup_{C\in \mathcal{C}_{\theta ,R}}
\sup_{v\in \mathbb{R}}\left[
\P(f_{\eta \cap C}(0)\in [v-\delta _{r,0},v+\delta _{r,0}])\; |\; \sigma (\delta _{r,0}))
\right]
\right).
\end{align}
To bound this quantity we need to study the density of the shot-noise field.

\begin{lemma}
\label{lm:sn-density}Assume that $f_{\eta }$ is of the form \eqref{eq:sn-g}.
Let  $\delta >0,R\geqslant 1   $.
Then for $v\in \mathbb{R},C\in \mathcal{C}_{\theta ,R}$,
\begin{align*}
\P(f_{\eta \cap C}(0 )\in [v-\delta ,v+\delta ]\; , \; | \eta \cap C | \geqslant 2)\leqslant \kappa \delta 
 \int_{0}^{R}\frac{(\rho ^{-2}\wedge \rho ^{2(d-1)})d\rho }{- g'(\rho )}.
\end{align*}
\end{lemma}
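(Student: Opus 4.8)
The plan is to exploit the hypothesis $|\eta \cap C|\geqslant 2$ in order to smear the value $f_{\eta \cap C}(0)$ using \emph{two} points: this turns the anti-concentration estimate into a clean $L^2$-bound on the value-density of a single point's contribution, and the two competing weights $\rho^{-2}$ and $\rho^{2(d-1)}$ will come out of the correct volume normalisation. First I would condition on $\{|\eta \cap C|=n\}$ for a fixed $n\geqslant 2$. Since $C$ is bounded, conditionally on this event the points of $\eta \cap C$ are iid uniform on $C$, say $X_1,\dots ,X_n$, and $f_{\eta \cap C}(0)=\sum_{i=1}^n g(\|X_i\|)$. Freezing $X_3,\dots ,X_n$ (whose contribution is some constant $c$), it remains to bound the density near $v-c$ of $g(\|X_1\|)+g(\|X_2\|)$. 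Writing $h$ for the density of the single contribution $Y=g(\|X_1\|)$, the density of the sum is the convolution $h*h$, and by Cauchy--Schwarz $\|h*h\|_\infty \leqslant \|h\|_2^2$; hence $\P(g(\|X_1\|)+g(\|X_2\|)\in [w-\delta ,w+\delta ])\leqslant 2\delta \|h\|_2^2$ for every $w$, uniformly in the frozen variables and in $n\geqslant 2$. Averaging over $n\geqslant 2$ then yields $\P(f_{\eta \cap C}(0)\in [v-\delta ,v+\delta ],\,|\eta \cap C|\geqslant 2)\leqslant 2\delta \|h\|_2^2$.

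The next step is to compute $\|h\|_2^2$. Because $X_1$ is uniform on the cone $C\subset B(0,R)$, the law of $\rho =\|X_1\|$ has density $d\,\rho ^{d-1}/R^d$ on $(0,R]$; here the angular factor $\mathcal{H}^{d-1}(C\cap \mathcal{S}^{d-1})$ cancels against $\Vol(C)=\mathcal{H}^{d-1}(C\cap \mathcal{S}^{d-1})R^d/d$, which is exactly why the final estimate will be uniform over $C\in \mathcal{C}_{\theta ,R}$. Pushing this forward through the smooth strictly decreasing map $\rho \mapsto g(\rho )$ gives $h(y)=d\,\rho ^{d-1}/(R^d(-g'(\rho )))$ with $\rho =g^{-1}(y)$, and the change of variables $y=g(\rho )$, $|dy|=(-g'(\rho ))\,d\rho$, produces $\|h\|_2^2=d^2R^{-2d}\int_0^R \rho ^{2(d-1)}/(-g'(\rho ))\,d\rho$ (the bound being trivial when this integral diverges).

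Finally I would convert this into the stated form by the elementary domination $R^{-2d}\rho ^{2(d-1)}\leqslant \rho ^{-2}\wedge \rho ^{2(d-1)}$, valid for $\rho \in (0,R]$ with $R\geqslant 1$: on $(0,1]$ the minimum equals $\rho ^{2(d-1)}$ and $R^{-2d}\leqslant 1$, while on $[1,R]$ the minimum equals $\rho ^{-2}$ and $\rho ^{2d}\leqslant R^{2d}$ forces $R^{-2d}\rho ^{2(d-1)}\leqslant \rho ^{-2}$. Integrating against $(-g'(\rho ))^{-1}d\rho$ gives the claimed bound with $\kappa =2d^2$.

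The part to get right — and the only place where the hypothesis $|\eta \cap C|\geqslant 2$ is genuinely used — is keeping the correct normalisation $\Vol(C)^{-2}\sim R^{-2d}$: using two points makes the relevant quantity the squared $L^2$-norm of the \emph{normalised} value-density $h$, rather than the (lossy) pushforward of $\ell^d$ on $C$, which would only give the uniform weight $\rho ^{2(d-1)}$ and miss the improvement $\rho ^{-2}$ at large radii. It is precisely this $R^{-2d}$ factor, combined with $\rho \leqslant R$, that manufactures the minimum $\rho ^{-2}\wedge \rho ^{2(d-1)}$; I expect the verification that the single-point density $h$ has the asserted explicit form (and hence that the angular aperture drops out) to be the one computation requiring care.
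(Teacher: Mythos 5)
Your proposal is correct and follows essentially the same route as the paper: both reduce the anti-concentration estimate to the two-point convolution bound $\|h*h\|_\infty\leqslant\|h\|_2^2$ (the paper writes this as $\|\varphi_R^{\otimes n}\|_\infty\leqslant\|\varphi_R^{\otimes 2}\|_\infty\leqslant\int\varphi_R^2$), compute the same explicit single-point value-density via the radial change of variables with the angular aperture cancelling, and obtain the minimum $\rho^{-2}\wedge\rho^{2(d-1)}$ from the same interplay between the $R^{-2d}$ normalisation and $\rho\leqslant R$. Your pointwise domination $R^{-2d}\rho^{2(d-1)}\leqslant\rho^{-2}\wedge\rho^{2(d-1)}$ is just a reformulation of the paper's splitting of the integral at $\rho=1$, and the constant $2d^2$ matches.
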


Before proving this result, let us conclude the proof of Theorem  \ref{thm:SN-vol}. Assume without loss of generality $r\geqslant 2r_{0}/a_{-}$. By Assumption \ref{ass:SN-g}, \eqref{eq:tructruc} is bounded by 
 $
\sup_{C\in \mathcal{C}_{\theta ,R}} \P( | \eta \cap C | <2 ) +c\kappa (\E[ \delta _{r,0} \exp(cR^{d-\varepsilon })]).
 $
The decay assumption on $g$ yields that $\E (\delta _{r,0})\leqslant \kappa (1+r)^{-\lambda +d }$. Hence \eqref{eq:tructruc}  is bounded by 
\begin{align*}
  (1+\kappa   R^{d})\exp(-\kappa \theta R^{d})+c\kappa \exp(c\kappa \ln(r)^{\frac{d-\varepsilon }{d-\varepsilon /2}})(1+r)^{-\lambda +d }\leqslant \kappa (1+r)^{-(\lambda  '-d)}
\end{align*}for any $\lambda '\in (11d,\lambda  ).$ Hence \eqref{eq:ultimate-assumption-bis} and \eqref{eq:ultimate-assumption} hold  with $\alpha = (\lambda  '-d)/4>5d/2$.
\end{proof}

\begin{proof} [Proof of Lemma \ref{lm:sn-density}]
 Let $\lambda =\frac{\sigma _{d-1}}{\theta \kappa _{d} },n_{R}= | \eta \cap C| $ be the number of germs (Poisson variable with parameter $ \ell^d(C) = R^{d}/\lambda $), and let $g_{R}(x)=g(\|x\|) \mathbf{1}_{\{x\in C\}},$ so that $f_{\eta \cap C} (0 )=\sum_{i=1}^{n_{R}} g_{R}(X_{i})$ where the $X_{i}$ are uniform iid in $  C$. Call  $\mu _{R}$ the distribution of the $ g_{R}(X_{i})$. We have for  every $b>a\geqslant g(R),$ since $g$ is one-to-one and continuous
\begin{align*}
\mu _{R}([a,b))=&\frac{\lambda }{   R^{d}}\int_{ C } \mathbf{1}_{\{a\leqslant   g (\|x\|)<b\}}dx=\frac{\lambda }{  R^{d}}\int_{g^{-1}(b)}^{g^{-1}(a)}\theta \rho ^{d-1}d\rho \\
=&\frac{\sigma _{d-1} }{d\kappa _{d} R^{d}} (g  ^{-1}(a)^{d}-g  ^{-1}(b)^{d}),\\
\end{align*}
  whence $\mu _{R}$ has density
 $
\varphi _{R}(a)=\mathbf{1}_{\{a\geqslant g(R)\}} \frac{\sigma _{d-1}  }{\kappa _{d}R^{d}} \left(
\frac{g ^{-1}(a)^{d-1}}{-g  '(g  ^{-1}(a))}
\right) .$
 Then, denoting by $\varphi _{R}^{\otimes n}$ the density $\varphi _{R}$ convoluted with itself $n$ times on the real line, 
\begin{align}
\notag\P(f_{\eta \cap C}(0)\in [v-\delta ,v+\delta ] \; , \; | \eta \cap C | \geqslant 2)\leqslant& \sum_{n=2}^{\infty }\P(n_{R}=n)\P\left(
\sum_{i=1}^{n}g_{R}(X_{i})\in [v-\delta ,v+\delta ] 
\right)\\
\notag\leqslant &\sum_{n\geqslant 2}\P(n_{R}=n)\|\varphi _{R}^{\otimes n}\|_{\infty }2\delta \\
\label{eq:intermed-densite-SN-2}\leqslant&2\sup_{n\geqslant 2}\|\varphi _{R}^{\otimes n}\|_{\infty }\delta .
\end{align}
  Due to convolution properties, for $n\geqslant 2$, 
\begin{align*}
\|\varphi _{R}^{\otimes n}\|_{\infty }
%\leqslant&
% \|\varphi _{R}^{\otimes n-1}\|_{\infty }\|\varphi _{R}\|_{1}=\|\varphi _{R}^{\otimes n-1}\|_{\infty }\leqslant \|\varphi _{R}^{\otimes n-2}\|_{\infty }\leqslant ...\\
\leqslant &\|\varphi _{R}^{\otimes 2}\|_{\infty }\leqslant  \int_{\mathbb{R}}\varphi _{R }^{2}(a)da=\E \varphi _{R}(g_{R}(X_{1}))\\
=&\frac{ \lambda }{ R^{d}}\int_{C}\varphi _{R }(g (\|x\| ))dx\\
\leqslant &\frac{ \lambda }{ R^{d}}\int_{ C}\frac{g ^{-1}(g(\|x\|))^{d-1}}{-g '(g ^{-1}(g(\|x\|)))}\frac{\sigma _{d-1} dx}{ \kappa _{d}R^{d}}\\
=&\left(
\frac{\sigma _{d-1} }{\kappa _{d}R^{d}}
\right)^{2}\int_{0}^{R}\frac{1}{-g '(\rho )}\rho ^{2(d-1)}d\rho \\
\leqslant &\left(
\frac{\sigma _{d-1} }{\kappa _{d} }
\right)^{2}\left( \frac{1}{R^{2d}}\int_{0}^{1 }\frac{\rho ^{2(d-1)}}{-g'(\rho )}d\rho +
\int_{1 }^{R}\frac{\rho ^{-2}d\rho }{-g '(\rho )}
\right),
\end{align*}
which concludes the lemma after reporting in \eqref{eq:intermed-densite-SN-2}.
\end{proof}

\begin{proof}[Proof of claim \eqref{eq:density}]
Let $v>0,\delta >0$. Let $R=R_{\delta }:=1\wedge  | \ln(\delta ) | ^{\frac{1}{d-\varepsilon /2}}$. Introduce the events $A_{\delta ,v}=\{f_{\eta ^{R}}(0)\in (v-\delta ,v+\delta )\}$, $A_{\delta ,v}'=\{f_{\eta }(0)\in (v-\delta ,v+\delta )\}$, $B_{\delta }=\{ | \eta ^{R} | \geqslant 2\}$. Since Assumption \ref{ass:SN-g} holds, Lemma \ref{lm:sn-density} yields $\P (A_{\delta,v } \cap B_{\delta })\leqslant c_{a}\delta ^{a}$ for all $v>0$. Let $U_{\delta }=f_{\eta }(0)-f_{\eta ^{R_{\delta }}}(0)$. Note that $U_{\delta }$ is independent from $f_{\eta ^{R_{\delta }}}(0)$. We have 
\begin{align*}
\P (A_{\delta,v })\leqslant& \P (A_{\delta ,v}\cap B_{\delta })+\P (B_{\delta }^{c})\leqslant c_{a}\delta ^{a}+o(\delta ^{a})\leqslant c_{a}'\delta ^{a}.\\
\P (A_{\delta ,v }')=&\E \left[
\P (f_{\eta ^{R}}(0)+U_{\delta }\in (v-\delta ,v+\delta ) | U_{\delta })\right]\\
\leqslant & \E (\P (A_{\delta ,v-U_{\delta }} | U_{\delta }))
\leqslant \E (c_{a}'\delta ^{a})=c_{a}'\delta ^{a}
,
\end{align*}
hence the claim is proved.
\end{proof}

\subsection{Perimeter}
\label{sec:SN-per}

We use in this section the variational definition of perimeter, following Ambrosio, Fusco and Pallara \cite{AFP}. Define the \emph{perimeter} of a measurable set $A\subset \mathbb{R}^{d}$ within $U\subset \mathbb{R}^{d}$ as the  total variation of its indicator function
\begin{align*}
\Per(A;U):=\sup_{\varphi \in \mathcal{C}_{c}^{1}(U,\mathbb{R}^{d}):\|\varphi \|\leqslant 1}\int_{\mathbb{R}^{d}}\mathbf{1}_{A}(x)\text{\rm{div}}\varphi (x)dx,
\end{align*}where $\C_{c}^{1}(U,\mathbb{R}^{d})$ is the set of continuously differentiable functions with compact support in $U$. Note that for regular sets, such as $\mathcal{C}^{1}$ manifolds, or convex sets with non-empty interior,   this notion meets the classical notion of $(d-1)$-dimensional Hausdorff surface measure \cite[Exercise 3.10]{AFP}, even though the term {\it perimeter} is traditionally  used for $2$-dimensional objects. It is a possibly infinite quantity, that might also have counterintuitive features for pathological sets (\cite[Example 3.53]{AFP}). The main difference with the traditional perimeter is that the variational one obviously cannot detect the points of the boundary whose neighborhoods don't charge the volume of the set, such as in line segments for instance.

 For any  measurable function $f:\mathbb{R}^{d}\to \mathbb{R}$ and level $u\in \mathbb{R}$, the perimeter of the excursion $\Per(\{f\geqslant u\};U)$ within $U$  is a well-defined  quantity. To be able to compute it efficiently, we must  make additional assumptions on the regularity of $f$. Following \cite{BieDes}, we assume that $f$ belongs to the space $BV(U)$ of functions with bounded variations, i.e.   $f\in L^{1}(U)$ and its  variation above $U$ is finite: 
\begin{align*}
V(f,U): =\sup_{\varphi \in \mathcal{C}_{c}^{1}(U,\mathbb{R}^{d}):\|\varphi \|\leqslant 1}\int_{U}f(x)\text{\rm{div}}\varphi (x)dx<\infty .
\end{align*}
The original (equivalent) definition  states that $f\in L^{1}(U)$ is in $ BV(U)$ if and only if the following holds (\cite[Proposition 3.6]{AFP}): there exists signed Radon measures  $D_{i}f$ on $U$$, 1\leqslant i\leqslant d$, called \emph{directional derivatives} of $f$, such that for all $\varphi \in \mathcal{C}_{c}^{\infty }(\mathbb{R}^{d})$,
\begin{align*}
\int_{U}f(x)\text{\rm{div}}\varphi (x)dx=-\sum_{i=1}^{d}\int_{U}\varphi _{i}(x)D_{i}f(dx).
\end{align*}Then there is a finite Radon measure $\|Df\|$ on $U$, called \emph{total variation measure}, and a   $\mathcal{S}^{d-1}$-valued function $\nu _{f}(x),x\in U$, such that $Df=\sum_{i}D_{i}f=\|Df\|\nu _{f}$. 
According to the Radon-Nikodym theorem, the total variation can be decomposed as 
\begin{align}
\label{eq:measure-decomp}
\|Df\|=\nabla f \ell^d+D^j f +D^c f
\end{align}where $\nabla f$ is defined as the density of the continuous part of $\|Df\|$ with respect to $\ell^d$, $D^c f+D^jf$ is the singular part of $\|Df\|$ with respect to Lebesgue measure, decomposed in the  \emph{Cantor part} $D^cf$, and the jump part $D^j f$, that we specify below, following \cite[Section 3.7]{AFP}. 

For $x\in U,$ denote by  $H_{x}$ the  affine hyperplane containing $x$ with outer  normal vector $\nu _{f}(x)$. For $r>0$, denote by $B^{+}(x,r)$ and $B^{-}(x,r)$ the two components of $B(x,r)\setminus H_{x}$, with $\nu _{f}(x)$ pointing towards $B^{+}(x,r)$. Say that $x$ is a \emph{regular point} if there are two values $f^{+}(x)\geqslant f^{-}(x)$ such that 
\begin{align}
\label{eq:representation-jump-measure}
\lim_{r\to 0}r^{-d}\hspace{-0.3cm}\int\limits_{B^{+}(x,r)} \hspace{-0.3cm}| f^{+}(x)-f(y) | dy=\lim_{r\to 0}r^{-d}\hspace{-0.3cm}\int\limits_{B^{-}(x,r)} \hspace{-0.3cm}| f(y)-f^{-}(x) | dy=0.
\end{align}It turns out that the set of non-regular points has $\mathcal{H}^{d-1}$-measure $0$ (\cite[Th. 3.77]{AFP}), and the set $J_{f}$ of points where $f^{+}(x)>f^{-}(x)$, called \emph{jump points},  has Lebesgue measure $0$  (\cite[Th. 3.83]{AFP}). Then, the jump measure of $f$ is represented by 
\begin{align*}
D^j f(dx)=\mathbf{1}_{\{x\in J_{f}\}}(f^{+}(x)-f^{-}(x))\mathcal{H}^{d-1}(dx),
\end{align*}
where $\mathcal{H}^{d-1}$ stands for the $(d-1)$-dimensional Hausdorff measure.

In the classical case where $f$ is continuously differentiable on $U$, $Df=\nabla f\ell^d$, $\nu _{f}(x)=\|\nabla f(x)\|^{-1}\nabla f(x)$ (and takes an irrelevant arbitrary value if $\nabla f(x)=0$), and $V(f;U)=\int_{U}\|\nabla f(x)\|dx$. If $f=\mathbf{1}_{\{A\}}$ for some $\C^{1}$ compact manifold $A$, $\nu _{f}(x)$ is the outer normal to $A$ for $x\in \partial A$, $\nabla f=0,D^c f=0$, and $D^j f=\mathbf{1}_{\{\partial A\}}\mathcal{H}^{d-1}.$

Denote by $SBV(U)$ the functions $f\in BV(U)$ such that 
$D^c f=0$. Assume  here   that for $m\in \M, g_{m}\in SBV(\mathbb{R}^{d})$, and that $$\int_{\M}\left[
\int_{{\mathbb{R}^{d}}}( | g_{m}(t) | +\|\nabla g_{m}(t)\|) dt
+\int_{J_{g_{m}}}  | g_{m}^{+}(t)-g_{m}^{-}(t) |\mathcal{H}^{d-1}(dt) \right]\mu (dm)<\infty .$$
Let $\N_{0}$ be the class of configurations $\zeta $ such that the corresponding shot noise field $f_{\zeta }$ is of class $SBV(U)$ on every bounded set $U$, finite a.e. on $\mathbb{R}^{d}$, 
its gradient density defined by \eqref{eq:measure-decomp} is a vector-valued shot-noise field, defined a.s. and $\ell^d$-a.e. by 
\begin{align*}
\nabla f_{\zeta }(t  )=\sum_{(x,m)\in \zeta }\nabla g_{m}(t-x),
\end{align*}  and its jump set $J_{f}$ is the union of the translates of the impulse  jump sets: $J_{f}=\cup _{(x,m)\in \zeta }(x+J_{g_{m}}), $ and the jumps of $f$ are  
\begin{align*}
f_{\zeta }^{+}(y )-f_{\zeta }^{-}(y )=\sum_{(x,m)\in \zeta }\mathbf{1}_{\{y\in x+J_{g_{m}}\}}(g_{m}^{+}(y-x)-g_{m}^{-}(y-x)),y\in J_{f}.
\end{align*}
\cite[Theorem 2]{BieDes} and the previous assumption yield that $\eta \in \N_{0}$ a.s..
Let $h$ be a  {\it test function}, i.e. a function $h:\mathbb{R}\to \mathbb{R}$ of class $\mathcal{C}^{1}$  with compact support. Let $H$ be a primitive function of $h$. Bierm\'e and Desolneux \cite[Theorem 1]{BieDes} give for  $W\subset \mathbb{Z} ^{d},\zeta \in \N,$
\begin{align*}
F_{ W}^{h,Per}(\zeta ):=\int_{\mathbb{R}}h(u)\Per(\{f_{\zeta }\geqslant u\};\tilde W)du=F_{  W}^{h,cont}(\zeta )+F_{  W}^{h,jump}(\zeta ),
\end{align*}where 
\begin{align*}
F_{ W}^{h,cont}(\zeta )&=\int_{\tilde W}h(f_{\zeta }(x ))\|\nabla f_{\zeta }(x )\|dx,\\
F_{  W}^{h,jump}(\zeta )&=\int_{J_{f}\cap \tilde W}(H(f_{\zeta }^{+}(x ))-H(f_{\zeta }^{-}(x )))\mathcal{H}^{d-1}(dx).
\end{align*}
 Their expectations under $\eta $ are  computed in \cite[Section 3]{BieDes} :
\begin{align*}
\E [F_{W}^{h,cont}(\eta )]&=  | W |    \E\left[
 h(f_{\eta }(0))\|\nabla f_{\eta }(0)\|
\right]\\
\E [F_{ W}^{h,jump}(\eta )]&=   | W |  \int_{\M}\int_{J_{g_{m}}}\left(
\int_{g_{m}^{-}(y)}^{g_{m}^{+}(y)}\E [h(s+f_{\eta }(0))]ds 
\right)\mathcal{H}^{d-1}(dy)\mu (dm).
\end{align*}
%Let us now give their second order behaviour.    Throughout we make the assumption that the functional is non-trivial on finite input, i.e. there is $ \rho >0$ and $W\subset \mathbb{Z} ^{d}$ bounded such that $\P (F_{W}^{h,Per}(\eta ^{\rho })\neq F_{W}^{h,Per}(\emptyset ))\neq 0.$
%  It is difficult to give sharp necessary conditions for non-degeneracy of the variance if the function $h$ changes signs, so we treat the case $h\geqslant 0  $, but it is clearly non-optimal.
Let us now give their second order behaviour.    
  It is difficult to give sharp necessary conditions for non-degeneracy of the variance if the function $h$ changes signs, so we treat the case $h\geqslant 0  $, but it is can clearly be extended.

\begin{theorem}
\label{thm:sn-per}
Let $\W=\{W_{n};n\geqslant 1\}$ satisfying \eqref{eq:ball-like}. Assume that \eqref{eq:ass-nontrivial-volume} holds and that $\P (F_{W}^{h,Per}(\eta )\neq F_{W}^{h,Per}(\emptyset ))>0$ for some $W\subset \mathbb{Z} ^{d}$. 
Assume that for some 
$\alpha >5d/2,c >0$,  
\begin{align} 
\label{eq:assumption-SN-Per-gradient-gm}
( \E | g_{M}(x) | ^{4})^{1/4} \leqslant c(1+\|x\|)^{-d-\alpha },\\
\label{eq:assumption-SN-Per-gradient}( \E\|\nabla g_{M}(x)\|^{4})^{1/4} \leqslant c(1+\|x\|)^{-d-\alpha },\\
\label{eq:assumption-jumps}
\left(
\E\left[ \hspace{-0.7cm}
\int\limits_{  \hspace{0.8cm}J_{g_{M}}\cap( x+[0,1)^{d}) }\hspace{-0.7cm}( 1\vee | g_{M}^{+}( t)-g_{M}^{-}(  t) |)\mathcal{H}^{d-1}(dt) 
\right]^{4} 
\right)^{1/4}\hspace{-0.5cm}\leqslant c(1+\|x\|)^{-d-\alpha }.
\end{align}
 Then the conclusions of Theorems \ref{thm:ultimate},\ref{thm:ultimate-2},\ref{thm-variance},\ref{thm:BE} hold for  $F_{0}:=F_{\{0\}}^{h,\Per}$. In particular, $F_{W}^{h,\Per}$ has a variance proportional to $ | W | $ and follows a CLT.
 \end{theorem}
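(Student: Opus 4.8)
The plan is to verify that $F_0:=F_{\{0\}}^{h,\Per}$ satisfies the hypotheses of Theorems \ref{thm:ultimate} and \ref{thm:ultimate-2}, i.e.\ the mixing bound \eqref{eq:ultimate-assumption-bis} (finite input) or \eqref{eq:ultimate-assumption} (infinite input), together with Assumption \ref{ass:nondeg}. First I would record that, since $\Per(A;\cdot)$ is a measure and $\tilde Q_1$ tiles $\mathbb{R}^{d}$, one has $F_W^{h,\Per}(\zeta)=\sum_{k\in W}F_0(\zeta-k)$ (resp.\ with $\zeta\cap\tilde W$ in place of $\zeta$), so $F_W^{h,\Per}$ is exactly of the form \eqref{FW-general-form} (resp.\ \eqref{FW-general-form-Fk}) and the theorems apply once the two hypotheses are checked. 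Throughout I would keep the Bierm\'e--Desolneux decomposition $F_0=F_0^{h,cont}+F_0^{h,jump}$ and bound the two pieces separately, writing $\eta'=\eta\cup\zeta$, $A=\eta'\cap B$, $A_r=\eta'\cap B_r\cap B$ and introducing the perturbation field $p=f_A-f_{A_r}=\sum_{(x,m)\in A\setminus B_r}g_m(\cdot-x)$ carried by the far points.

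For the continuous part I would use that $h\in\mathcal{C}^1_c$ to obtain, on $\tilde Q_1$, the pointwise estimate
\[
\bigl|h(f_A)\|\nabla f_A\|-h(f_{A_r})\|\nabla f_{A_r}\|\bigr|\leqslant \|h\|_\infty\|\nabla p\|+\|h'\|_\infty\|\nabla f_{A_r}\|\,|p|,
\]
so that $|F_0^{h,cont}(A)-F_0^{h,cont}(A_r)|$ is dominated by $\|h\|_\infty\int_{\tilde Q_1}\|\nabla p\|+\|h'\|_\infty\int_{\tilde Q_1}\|\nabla f_{A_r}\|\,|p|$. Taking $L^4$ norms, Minkowski reduces the first term to $\int_{\tilde Q_1}(\E\|\nabla p(t)\|^4)^{1/4}\,dt$; computing this shot-noise moment through the multivariate Mecke formula (Lemma \ref{lm:mecke}), whose integrand is supported on $\{\|x-t\|\gtrsim a_{-}r\}$, and inserting \eqref{eq:assumption-SN-Per-gradient} yields a bound $\kappa(1+r)^{-\alpha}$, the mean contribution $\int_{\|x\|\gtrsim r}(1+\|x\|)^{-d-\alpha}\,dx$ being dominant. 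For the cross term I would use H\"older to separate the local factor $\|\nabla f_{A_r}\|$, whose moments are bounded uniformly in $r,B$ by \eqref{eq:assumption-SN-Per-gradient}, from the far factor $|p|$, whose norm decays like $(1+r)^{-\alpha}$ by \eqref{eq:assumption-SN-Per-gradient-gm}; the fixed extra points $x_1,x_2$ enter only through their own kernels and are absorbed the same way.

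The jump part is the main obstacle. The difference $F_0^{h,jump}(A)-F_0^{h,jump}(A_r)$ splits into (a) the integral of $H(f_A^+)-H(f_A^-)$ over the piece of jump set created by the far points, contained in $\bigcup_{(x,m)\in A\setminus B_r}(x+J_{g_m})\cap\tilde Q_1$, and (b) the change of the integrand on the retained jump set $J_{f_{A_r}}\cap\tilde Q_1$, where $f^{+}$ and $f^{-}$ are both shifted by the $\ell^d$-a.e.\ continuous increment $p$. For (a) I would bound $|H(f^+)-H(f^-)|\leqslant\|h\|_\infty|f^+-f^-|$ and control $\int_{(x+J_{g_m})\cap\tilde Q_1}(1\vee|g_m^+-g_m^-|)\,\mathcal{H}^{d-1}$ by \eqref{eq:assumption-jumps}, summing over far points and taking $L^4$ norms through Mecke. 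For (b) I would use $|(H(f_A^+)-H(f_A^-))-(H(f_{A_r}^+)-H(f_{A_r}^-))|\leqslant 2\|h\|_\infty|p|$ on $J_{f_{A_r}}$ and integrate against the random surface measure carried by $J_{f_{A_r}}\cap\tilde Q_1$, whose total mass has moments bounded uniformly in $r,B$ by \eqref{eq:assumption-jumps}; one further H\"older split isolates the decaying far factor $|p|$. The delicate points are that the far points almost surely add no jump mass on top of the retained jump set (an $\mathcal{H}^{d-1}$-null coincidence absorbed into the $\ell^d$-a.e.\ statement), so the split (a)/(b) is exhaustive, and that the surface-mass moments are genuinely uniform over $B\in\mathcal{B}_{\W}^{r}$.

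Finally I would verify Assumption \ref{ass:nondeg}. Since $h\geqslant0$ the perimeter contributions cannot cancel, so following the scheme of the volume proof (Lemma \ref{lm:nontrivial-levelset} and the opening of the proof of Theorem \ref{thm:SN-vol}) I would condition on the event $\Gamma_\rho$ that every impulse of $\eta^\rho$ has a non-negative kernel, use \eqref{eq:ass-nontrivial-volume} and the hypothesis $\P(F_W^{h,\Per}(\eta)\neq F_W^{h,\Per}(\emptyset))>0$ to produce with positive probability a non-trivial perimeter created by $\eta^\rho$, and invoke the decay of $f_{\eta_\gamma}$ to ensure that superposing the far field $\eta_\gamma$ does not destroy it for $\gamma$ large. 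Collecting the continuous and jump bounds gives \eqref{eq:ultimate-assumption-bis}/\eqref{eq:ultimate-assumption} with the announced $\alpha>5d/2$, whence the conclusions of Theorems \ref{thm:ultimate}--\ref{thm:BE}.
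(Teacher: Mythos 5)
Your treatment of the mixing condition \eqref{eq:ultimate-assumption-bis}/\eqref{eq:ultimate-assumption} is essentially the paper's own proof: the same decomposition $F_0=F_0^{h,cont}+F_0^{h,jump}$, the same pointwise bound $\|h\|_\infty\|\nabla p\|+\|h'\|_\infty\|\nabla f\|\,|p|$ for the continuous part, the same split of the jump difference into the contribution of the jump sets carried by the far points (controlled by \eqref{eq:assumption-jumps}) and the shift of the integrand on the retained jump set (controlled by $2\|h\|_\infty|p|$ integrated against the surface measure, whose moments come from \eqref{eq:assumption-jumps} and Lemma \ref{lm:mecke}), and the same remark that the various translated jump sets have $\mathcal{H}^{d-1}$-negligible pairwise intersections for a.e.\ $x_1,x_2$. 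That part is sound.

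The gap is in Assumption \ref{ass:nondeg}. You propose to ``follow the scheme of the volume proof'': condition on the event $\Gamma_\rho$ that all kernels of $\eta^\rho$ are non-negative and argue that the perimeter created by $\eta^\rho$ ``is not destroyed'' when $\eta_\gamma$ is superposed. The volume argument works because $f_{\eta^\rho\cup\eta_\gamma}\geqslant f_{\eta_\gamma}$ on $\Gamma_\rho$, so the excursion set can only grow and the \emph{volume} increase is bounded below by $\ell^d(\{f_{\eta^\rho}>2u\}\cap\{|f_{\eta_\gamma}|<u\})$. No such monotonicity exists for the perimeter: enlarging an excursion set can decrease its perimeter, and $h\geqslant 0$ only makes the integrand of $F^{h,\Per}$ non-negative, not the difference $F_{Q_\delta}(\eta_\gamma\cup\eta^\rho)-F_{Q_\delta}(\eta_\gamma)$, which can have either sign. ``Superposing the far field does not destroy the perimeter'' is precisely the claim to be proved, and the decay of the field $f_{\eta_\gamma}$ alone does not give it. The paper instead runs a quantitative perturbation argument: writing $U=|F_{Q_\delta}(\eta_\gamma\cup\eta^\rho)-F_{Q_\delta}(\eta_\gamma)-F_{Q_\beta}(\eta^\rho)|$, it bounds $\E U$ by three cross terms, each controlled by the $L^1$ versions of the continuity estimates \eqref{eq:sum-F0hcont} and \eqref{eq:sum-F0hjump} with suitable choices of $A_1,A_2$ (giving $\kappa\beta^d(\gamma-\beta)^{d-\alpha}+C_\rho(\beta-\rho)^{d-\alpha}$), then chooses $\beta$ and $\gamma$ large and concludes by Markov's inequality together with $\P(|F_{Q_\beta}(\eta^\rho)|\geqslant c)\geqslant p$ (obtained from Lemma \ref{lm:nontrivial-levelset} and the fact that a set of positive volume has positive perimeter). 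You should replace the $\Gamma_\rho$/monotonicity sketch by this perturbative argument, which reuses the estimates you have already established in the first part.
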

 
\begin{example}
 Assume $\M=\mathbb{R}$ is endowed with a probability measure $\mu $ with finite $4$-th moment.  Let  $f$ be a function of the form 
\begin{align*}
f_{\zeta }(x )=\sum_{(y,m)\in \zeta }mg(\|x-y\|)
\end{align*} with $g\in SBV(\mathbb{R})$. Conditions \eqref{eq:assumption-SN-Per-gradient-gm} and \eqref{eq:assumption-SN-Per-gradient}  hold  if $ | g(r) | \leqslant C(1+r)^{-d-\alpha } $  and $ |  g'(r) | \leqslant C(1+r)^{-d-\alpha },r>0$. Then \eqref{eq:assumption-jumps} holds if $J_{g}$ is  countable and for some $C>0,$ for every $r>0$
\begin{align*}
\sum_{t\in J_{g}\cap [r,r+1)}(1\vee | g^{+}(t)-g^{-}(t) | )\leqslant C(1+r)^{-d-\alpha }.
\end{align*}  
\end{example}

\begin{proof}First, \eqref{eq:assumption-SN-Per-gradient-gm}-\eqref{eq:assumption-SN-Per-gradient} imply that the shot noise process and its gradient measure are a.s. well defined. The functionals $F_{W}^{h,cont},F_{W}^{h,jump}$ are under the form \eqref{FW-general-form-Fk}-\eqref{FW-general-form}, with $F_0$ defined respectively by, for $\zeta \in \N,$ 
\begin{align*}
F_{0}^{h,cont}(\zeta )=&\int_{\tilde Q_1}h(f_{\zeta }(t ))\|\nabla f_{\zeta }(t)\|dt\\
F_{0}^{h,jump}(\zeta )=&\int_{J_{f_{\zeta } }\cap \tilde Q_1}(H(f_{\zeta }^{+}(t ))-H(f_{\zeta }^{-}(t) ))\mathcal{H}^{d-1}(dt),
\end{align*}
where $H$ is a primitive function of $h$.
  
 Let $\x_{i}=(x_{i},m_{i})\in \mathbb{R}^{d},i=1,\dots ,6.$
  Let $r>0,  \zeta \subset \{\x_{1},\x_{2}\}  $, and let $\eta _{j}=\eta '\cap A_{j},j=1,2,$ for some $A_{1}\subset A_{2}\subset \mathbb{R}^{d}$ that coincide on $B_{r}$. By the triangular inequality,
  \begin{align*}
\Big|
&F_{0}^{h,cont}(\eta _{1} )-F_{0}^{h,cont}(\eta _{2})
\Big|\leqslant \int_{\tilde Q_1}\|h'\|_{\infty } | f_{\eta _{1} }(t )-f_{\eta _{2}}(t) | \|\nabla f_{\eta_{1} }(t )\|dt\\
&\hspace{5cm}+\int_{\tilde Q_1}\|h\|_{\infty } \|\nabla f_{\eta _{1}}(t )-\nabla f_{\eta _{2}}(t)\|dt\\
\leqslant &\sum_{(x,m)\in \eta  '\setminus B_{r}}\int_{\tilde Q_1}\left[
\|h'\|_{\infty }\|\nabla f_{\eta _{1} }(t)\| | g_{m}(x-t  ) |+\|h\|_{\infty }\|\nabla g_{m}(x-t)\|
\right] dt.
\end{align*}
 Define for $\zeta_{0} \in \N$, $\x=(x,m)\in \overline{\mathbb{R}^{d}}$,
\begin{align*}
\psi (\x,\zeta_{0} )=\int_{\tilde Q_1}\left[\|h'\|_{\infty } \|\nabla f_{ (\zeta_{0}\cup \zeta)\cap A_{1}   }(t) \|| g_{m}(x-t) | +\|h\|_{\infty }\|\nabla g_{m}(x-t)\|\right]dt.
\end{align*} For 
$\zeta '\subset \{\x_{i},3\leqslant i\leqslant 6\}$,  Jensen's inequality yields for $\x=(x,m)\in \overline{\mathbb{R}^{d}}$
\begin{align*}
\E \psi (\x ,\eta \cup   \zeta' )^{4}\leqslant C\int_{\tilde Q_1}\E \left[
 | g_{m}(x -t) | ^{4}\E  \|\nabla f_{ \eta _{1}\cup \zeta ' }(t)\|^{4}+\E \|\nabla g_{m}(x -t)\|^{4}
\right]dt.
\end{align*} 
An easy application of  Lemma \ref{lm:mecke} with $\psi '(x,m)=\|\nabla g_{m}(x-t)\|,r=0$ yields that $\E\| \nabla f_{\eta _{1}\cup \zeta ' }(t)\|^{4} \leqslant c<\infty  $ where  $c$ does not depend on $t\in \mathbb{R}^{d},A_{1}$ or the $\x_{i}$.  
 Therefore, Assumptions \eqref{eq:assumption-SN-Per-gradient-gm} and \eqref{eq:assumption-SN-Per-gradient} yield for $\x=(x,m)\in \overline{\mathbb{R}^{d}}$
\begin{align*}
\E[ \psi (\x ,\eta\cup   \zeta' )^{4}]\leqslant C (1+\|x\|)^{-4(\alpha+d) },
\end{align*}and  Lemma \ref{lm:mecke} with \eqref{eq:sum-F0hcont}  yields that
\begin{align}
\label{eq:sum-F0hcont}
\left(
\E\left[
\Big|
 F_{0}^{h,cont}(\eta  \cap A_{1} )-F_{0}^{h,cont}(\eta \cap A_{2})
\Big | ^{4}
\right]
\right)^{1/4}\leqslant C(1+r)^{-\alpha },
\end{align}where $C$ does not depend on the $A_{i}$. Hence,  \eqref{eq:ultimate-assumption-bis} is satisfied by $F_{0}^{h,cont}$ (hypothetical points of $\zeta \setminus B_{r}$ have to be treated separately).  
 
 Let us now prove that it  is satisfied by the jump functional $F_{0}^{h, jump}$. Since it has to hold only for $\ell^d$-a.e. $x_{1},x_{2}$, and the $J_{g_{1}},J_{g_{2}}$ have finite $\mathcal{H}^{d-1}$ measure, we assume that $J_{g_{m_{1}}}  -x_{1} $ and $J_{g_{m_{2}}}  -x_{2} $ have a $\mathcal{H}^{d-1}-$negligible intersection.  They  also a.s. have a $\mathcal{H}^{d-1}$-negligible intersection with each $J_{g_{m}} -x $, $(x,m)\in \eta $.  Call $f_{1}= f _{\eta _{1}},f_{2}=f_{\eta _{2}}$,  \begin{align}
\notag&\left|
F^{h,jump}_{0}(\eta _{1})-F^{h,jump}_{0}(\eta  _{2})
\right|\\
\notag=&\left|
\sum_{(x,m)\in \eta _{ 1}}\int_{J_{g_{m}}\cap \tilde Q_1}\left[
(H(f_{1}^{+}(t))-H(f_{1}^{-}(t)))-(H(f_{2}^{+}(t))-H(f_{2}^{-})(t))
\right]\mathcal{H}^{d-1}(dt)\right.\\
\notag&-
\left.\sum_{(x,m)\in \eta _{2}\setminus \eta _{1}}\int_{J_{g_{m}}\cap \tilde Q_1}\left[
H(f_{2}^{+}(t))-H(f_{2}^{-}(t))
\right]\mathcal{H}^{d-1}(dt)
\right|\\
\notag\leqslant & \int_{ J_{f_{1}}\cap \tilde Q_1}\|h\|(\left|
f_{2}^{+}(t)-f_{1}^{+}(t)
\right|+\left|
f_{2}^{-}(t)-f_{1}^{-}(t)
\right|)\mathcal{H}^{d-1}(dt)\\
\notag&+\sum_{(x,m)\in \eta' \setminus B _{r}}\int_{\tilde Q_1\cap J_{g_{m}}}\|h\|\left|
g_{m}^{+}(x-t)-g_{m}^{-}(x-t)
\right|\mathcal{H}^{d-1}(dt)\\
\label{eq:sum-F0hjump}\leqslant &\sum_{(x,m)\in \eta' \setminus B _{r}}\|h\|\left(2\underbrace{\int_{J_{f_{\eta '}}\cap \tilde Q_1} | g_{m}(x-t) | \mathcal{H}^{d-1}(dt)}_{=:\psi _{1}((x,m),\eta  )}+
\underbrace{\int_{\tilde Q_1\cap J_{g_{m}}} | g_{m}^{+}(x-t)-g_{m}^{-}(x-t) | \mathcal{H}^{d-1}(dt)}_{=:\psi _{2}(x,m)}
\right).
\end{align}  
We have $\E[ \psi _{2}( x ,M_{0} )^{4}]\leqslant C(1+ \|x\|)^{-4(\alpha+d) }$ by \eqref{eq:assumption-jumps}, and Jensen's inequality yields for $\zeta '\subset \{\x_{3},\dots ,\x_{6}\}$, $f_{3} =f_{\eta '\cup \zeta '  }$, after expanding the $4$-th power of the integral as a quadruple integral,
\begin{align*}
\E \psi _{1}((x ,M_{0}),\eta \cup \zeta ')^{4}=& \E\left(
 \E\left[\left(
\int_{J_{f_{3}}\cap \tilde Q_1} | g_{M_{0}}(x -t) |  \mathcal{H}^{d-1}(dt)
\right)^{4}
\Bigg |\sigma  (\eta ,\zeta ,\zeta ')\right]
\right)\\
\leqslant &\E\left(
 \left(
\int_{J_{f_{3}}\cap \tilde Q_1}(\E g_{M_{0}}( x -t)^{4})^{1/4}\mathcal{H}^{d-1}(dt)
\right)^{4} \Bigg | \sigma (\eta ,\zeta ,\zeta ')
\right)\\
\leqslant &C(1+\|x \|)^{-4(d+\alpha) }\E [\mathcal{H}^{d-1}(J_{f_{3}}\cap \tilde Q_1 ) ^{4}]
\end{align*}
 by Assumption \eqref{eq:assumption-SN-Per-gradient-gm}. Then \eqref{eq:assumption-jumps}   yields $\E[  \mathcal{H}^{d-1}( J_{f_{3 }}\cap \tilde Q_1 ) ^{4}]<\infty $  with an application of Lemma \ref{lm:mecke}, whence  Lemma \ref{lm:mecke} again yields that $F_{0}^{h,jump}$ also satisfies \eqref{eq:ultimate-assumption-bis} (here again the points of $\zeta\cup \zeta '$ have to be considered separately). Hence $F_{0} =F_{0}^{h,cont}+F_{0}^{h,Per}$ satisfies \eqref{eq:ultimate-assumption-bis}.

 It remains to prove Assumption \ref{ass:nondeg}. Assume wlog $F_{0}(\emptyset )=0$. Since a set with positive volume has positive perimeter, Lemma \ref{lm:nontrivial-levelset} and Assumption \eqref{eq:ass-nontrivial-volume} yield $ \rho>1 ,c>0,p>0$ such that for $\beta >\rho $, $\P ( | F_{Q_{\beta }}(\eta ^{\rho })  | \geqslant c)\geqslant p$. Then for $\delta >\gamma >\beta    ,$ 
\begin{align*}
U:=&\left|
F_{Q_{\delta }}(\eta _{\gamma }\cup \eta ^{\rho })-F_{Q_{\delta }}(\eta _{\gamma })-F_{Q_{\beta }}(\eta ^{\rho }) 
\right|\\
\leqslant & \left|
F_{Q_{\beta }}(\eta _{\gamma }\cup \eta ^{\rho })-F_{Q_{\beta }}(\eta ^{\rho })
\right|+ \left|F_{Q_{\delta }\setminus Q_{\beta }}(\eta ^{\rho }\cup \eta _{\gamma })-
F_{Q_{\delta } \setminus Q_{\beta  }}(\eta  _{\gamma })
\right|+ \left|
F_{Q_{\beta }}(\eta _{\gamma }) 
\right|\\
\E U\leqslant&\kappa  \beta ^{d}(\gamma -\beta  )^{-\alpha }+\sum_{m=\beta }^{\delta }\kappa m^{d-1}(m-\rho )^{-\alpha }+\kappa \beta ^{d}(\gamma -\beta )^{-\alpha }\leqslant \kappa \beta ^{d}(\gamma -\beta  )^{d -\alpha }+C_{\rho } (\beta-\rho ) ^{d-\alpha },
\end{align*}
the last estimates are obtained by choosing adequately $A_{1},A _{2}$ in   \eqref{eq:sum-F0hcont},\eqref{eq:sum-F0hjump}. We can arbitrarily increase $\beta $ such that  $C_{\rho } (\beta-\rho ) ^{d-\alpha }<pc/8$, and then for $\gamma $ sufficiently large  $\kappa \beta ^{d}(\gamma -{\beta })^{d-\alpha }<pc/8$ as well, from where 
\begin{align*}
\P ( | F_{Q_{\delta }}(\eta _{\gamma }\cup \eta ^{\rho })-F_{Q_{\delta }}(\eta _{\gamma }) | >c/2)\geqslant &\P ( | F_{Q_{\beta  }}(\eta ^{\rho }) | >c)-\P ( | U | >c/2) \\
\geqslant & p-\E U/(c/2)\geqslant p-p/2=p/2>0.
\end{align*}
That proves Assumption \ref{ass:nondeg} and concludes the proof.
 \end{proof}

\subsection{Fixed level perimeter and Euler characteristic}
\label{sec:sn-general}

\renewcommand{\p}{\mathcal{P}}
\renewcommand{\r}{\mathcal{R}}
Let $\mathcal{B}$ be a measurable subset   of $\mathcal{M}_d $, and let the marks space be  $\M=(\mathbb{R}\setminus \{0\} )\times \mathcal{B}$, endowed with the product $\sigma $-algebra and some probability measure $\mu $.  This section is restricted to shot-noise fields of the form
\begin{align}
\label{eq:sn-indic}
f_{\zeta  }(x )=\sum_{(y,(L,A))\in \zeta  }L\mathbf{1}_{\{x-y\in A\}}, \zeta  \subset   {\mathbb{R}^{d}}\times \M,x\in \mathbb{R}^{d}.
\end{align}
Such fields are used  in image analysis  \cite{BieDesEuler,BieDes}, or in mathematical morphology \cite{Lan02}, sometimes with $L =const.$, and their marginals might not  have a density. The article \cite{BdBDE} uses the asymptotic normality result below for the Euler characteristic when $\mathcal{B}$ is the class of closed discs in $\mathbb{R}^{2}$ (Example \ref{ex:discs}).

The current framework allows to give general  results for a fixed level $u\in \mathbb{R}$, for a large class of additive functionals, including the perimeter or the total curvature, related to the Euler characteristic. For the latter, the main difficulty is to properly define it on a typical excursion of the shot noise field, as it is obtained by locally adding and removing sets from $\mathcal{B}.$   The general result only involves the marginal distribution $\mu _{\mathcal{B}}(\cdot ):=\mu (\mathbb{R}\times \cdot )$.  
 
We call $\mathcal{B}'$ the class of excursion sets generated by shot noise fields of the form \eqref{eq:sn-indic}  where all but finitely many points of $\zeta $ in general position have been removed. 
Formally,
given a  measurable subclass $\mathcal{B}'\subset \mathcal{M}_d $, a function $V :\mathcal{B}'\to \mathbb{R}$ such that $V(A)$ only depends on $A\cap \tilde Q_1$, and a function {  $| V|  :\mathcal{B}\to (0,\infty )$}, say that $(\mathcal{B},\mathcal{B}',V,| V|  )$ is {\it admissible} if  for $ A_{1},\dots ,A_{q}\in  {\mathcal{B}}$, for a.a. $y_{1},\dots ,y_{q}\in \mathbb{R}^{d}$, any set $A$ obtained by sequentially removing, adding or intersecting  the $A_{i}+y_{i}, i=1,\dots ,q,$ belongs to $ {\mathcal{B}'}$, and  $
 |
V (A ) 
 |  \leqslant  \sum_{i= 1}^{q} | V|  (A_{i } ) $.  % and {\it stationarity} holds: $V(A+k;W+k)=V(A;W)$ for $k\in \mathbb{Z} ^{d}$.
 We consider below the functionals, for $W\subset \mathbb{Z} ^{d},\zeta \in \N$,
\begin{align*}
F_{W}(\zeta )=\sum_{k\in W}V(  \{f_{\zeta\cap \tilde W }\geqslant u\}-k ),\;\;\;F_{W}'(\zeta )=\sum_{k\in W}V(\{f_{\zeta  }\geqslant u\}-k).
\end{align*}

\begin{theorem}
\label{thm:fixed-u-EC}
Let $u\in \mathbb{R}$,  $(\mathcal{B},\mathcal{B}',V,| V|  )$ be an admissible quadruple, let $f$ be of the form \eqref{eq:sn-indic}, and let $\W=\{W_{n};n\geqslant1\}$ be a sequence of subsets of $\mathbb{Z} ^{d}$ satisfying \eqref{eq:ball-like}.    Assume that for some $\rho ,p,c>0$, $\P ( | F_{Q_{\beta }}(\eta ^{\rho }) |\geqslant c)\geqslant p$ for $\beta >\rho $,  that  $\int_{\mathcal{B}}| V|  (A)^{8}\mu_{\mathcal{B}} ( dA)<\infty ,$  and that for some $\lambda  >28d, C>0,$ 
\begin{align}
\label{ass:abstract-SN}
\mu _{\mathcal{B}}(\{A\in \mathcal{B}:(x+A)\cap \tilde Q_1\neq \emptyset \}) \leqslant  C(1+\|x\|)^{-\lambda },x\in \mathbb{R}^{d}.
\end{align} 
%Let $W_{n};n\geqslant1$ be a sequence of subsets of $\mathbb{Z} ^{d}$ satisfying \eqref{eq:ball-like}.
 Then the
conclusions of Theorems   \ref{thm:ultimate} and \ref{thm:ultimate-2} hold: $F_{W}$ and $F_{W}'$ have  variance of volume order and undergo a CLT. \end{theorem}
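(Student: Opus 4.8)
The plan is to deduce the statement from Theorems \ref{thm:ultimate} and \ref{thm:ultimate-2} applied to the functional $F_0(\zeta)=V(\{f_\zeta\geq u\})$. This $F_0$ is indeed of the form \eqref{FW-general-form-Fk}-\eqref{FW-general-form}, since $V(A)$ depends only on $A\cap\tilde Q_1$ and $\{f_{\zeta}\geq u\}-k=\{f_{\zeta-k}\geq u\}$, so that the functionals of the theorem coincide with $F_W$ and $F_W'$. Two facts must then be checked: the moment-decay bound \eqref{eq:ultimate-assumption-bis} (resp. \eqref{eq:ultimate-assumption}) for some $\alpha>5d/2$, and Assumption \ref{ass:nondeg}. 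The hypothesis $\P(|F_{Q_\beta}(\eta^\rho)|\geq c)\geq p$ is precisely the seed of non-triviality needed for the latter, exactly as in the proof of Theorem \ref{thm:sn-per}.

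For the decay bound, fix $r>0$, $B\in\mathcal{B}_{\W}^{r}$ (resp. $B=\mathbb{R}^d$), $\zeta\subset\{(x_1,M_1),(x_2,M_2)\}$, and write $\eta'=\eta\cup\zeta$, $E_1=\{f_{\eta'\cap B}\geq u\}$, $E_2=\{f_{\eta'\cap B\cap B_r}\geq u\}$. The first and main step is to localise the difference $V(E_1)-V(E_2)$. The fields $f_{\eta'\cap B}$ and $f_{\eta'\cap B\cap B_r}$ coincide on $\tilde Q_1$ unless some atom $(y,(L,A))\in\eta'\cap B$ with $y\notin B_r$ satisfies $(y+A)\cap\tilde Q_1\neq\emptyset$; since $V$ only sees $\tilde Q_1$, the two functionals are equal when no such atom exists. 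When such atoms are present, the admissibility of $(\mathcal{B},\mathcal{B}',V,|V|)$ — which bounds the valuation of any set built from $q$ pieces by $\sum_i|V|(A_i)$ — yields $|V(E_1)-V(E_2)|\leq\kappa\sum_{(y,(L,A))\in\eta'\cap B,\ y\notin B_r}\mathbf{1}_{\{(y+A)\cap\tilde Q_1\neq\emptyset\}}|V|(A)$, the quadruple being designed precisely so that adding or removing one support set changes $V$ by at most $|V|$ of that set. This localisation to far-away large grains is the crux of the argument, and extracting it rigorously from the sequential add/remove/intersect description of the excursion set is the step demanding the most care.

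Granting this bound, the fourth moment is handled by the Mecke formula (Lemma \ref{lm:mecke}): expanding the fourth power of the sum produces contributions indexed by the number of distinct atoms, the dominant one being the diagonal term $\int_{B_r^c}\int_{\mathcal{B}}\mathbf{1}_{\{(y+A)\cap\tilde Q_1\neq\emptyset\}}|V|(A)^4\mu_\mathcal{B}(dA)\,dy$. Hölder's inequality together with the moment condition $\int_\mathcal{B}|V|(A)^8\mu_\mathcal{B}(dA)<\infty$ and the grain-decay \eqref{ass:abstract-SN} bound the inner integral by $\kappa(1+\|y\|)^{-\lambda/2}$, so integrating over $B_r^c$ gives $\kappa r^{d-\lambda/2}$, while the multi-atom terms decay strictly faster. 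Taking fourth roots yields \eqref{eq:ultimate-assumption-bis} (resp. \eqref{eq:ultimate-assumption}) with $\alpha$ of order $\lambda/8$, and $\lambda>28d$ leaves ample margin over the thresholds $2d$ and $5d/2$, the surplus absorbing the boundary geometry of $B\in\mathcal{B}_{\W}^{r}$ in the finite-input case and the separate treatment of the deterministic atoms of $\zeta$.

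Finally, Assumption \ref{ass:nondeg} follows by the route of Theorem \ref{thm:sn-per}. Setting $U=|F_{Q_\delta}(\eta_\gamma\cup\eta^\rho)-F_{Q_\delta}(\eta_\gamma)-F_{Q_\beta}(\eta^\rho)|$ and using the decay estimate just established to choose the truncating sets appropriately, one shows $\E U\leq\kappa\beta^d(\gamma-\beta)^{d-\alpha}+C_\rho(\beta-\rho)^{d-\alpha}$, which is made $<pc/4$ by taking first $\beta$ and then $\gamma$ large (using $\alpha>d$). Combined with the hypothesis $\P(|F_{Q_\beta}(\eta^\rho)|\geq c)\geq p$, this gives $\P(|F_{Q_\delta}(\eta_\gamma\cup\eta^\rho)-F_{Q_\delta}(\eta_\gamma)|>c/2)\geq p-\E U/(c/2)\geq p/2$, which is Assumption \ref{ass:nondeg} with $\rho$ much smaller than $\gamma$. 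Theorems \ref{thm:ultimate} and \ref{thm:ultimate-2} then apply and deliver the volume-order variance and the central limit theorem.
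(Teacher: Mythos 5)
There is a genuine gap at the step you yourself single out as the crux, namely the localisation bound
$|V(E_1)-V(E_2)|\leqslant\kappa\sum_{(y,(L,A))\in\eta'\cap B,\,y\notin B_{r}}\mathbf{1}_{\{(y+A)\cap\tilde Q_1\neq\emptyset\}}| V| (A)$.
Admissibility only provides $|V(A)|\leqslant\sum_{i=1}^{q}| V| (A_{i})$ over \emph{all} grains used to build $A$; it contains no ``Lipschitz in the grains'' property asserting that adding or removing one support set changes $V$ by at most the $| V| $ of that set. Such a property is false in general for the quadruples the theorem is meant to cover: for the total curvature example one takes $| V| \equiv 1$, and deleting a single far-away disc that overlaps $\tilde Q_1$ can reconfigure the excursion set there and change $TC(\cdot;\tilde Q_1)$ by an amount comparable to the number of \emph{nearby} grains, not by $1$. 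So your claimed inequality, which controls the difference by far grains alone, does not follow from the hypotheses and is not true.

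The correct (and still sufficient) bound is obtained by applying admissibility to $E_1$ and $E_2$ separately and using the triangle inequality: writing $S_{r}^{-}$ and $S_{r}^{+}$ for the sums of $| V| (A)$ over grains reaching $\tilde Q_1$ with centres inside, respectively outside, $B_{r}$, one has $V(E_1)=V(E_2)$ on the event $\{S_{r}^{+}=0\}$ (the two fields then coincide on $\tilde Q_1$ and $V$ only sees $\tilde Q_1$), and otherwise
$|V(E_1)-V(E_2)|\leqslant 2(S_{r}^{-}+S_{r}^{+})$, whence
$|F_{0}(\eta'\cap B)-F_{0}(\eta'\cap B\cap B_{r})|\leqslant 2\mathbf{1}_{\{S_{r}^{+}\neq 0\}}S_{r}^{-}+2S_{r}^{+}$.
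The extra term $\mathbf{1}_{\{S_{r}^{+}\neq 0\}}S_{r}^{-}$ is harmless because $S_{r}^{-}$ and $S_{r}^{+}$ are independent, so its fourth moment factorises as $\P(S_{r}^{+}\neq 0)\,\E (S_{r}^{-})^{4}\leqslant C(1+r)^{-\lambda+d}$, which decays much faster than the $S_{r}^{+}$ contribution $(1+r)^{-\lambda/8+d}$ that you correctly identified as dictating $\alpha\approx\lambda/8-d$ and hence the threshold $\lambda>28d$. Your Mecke/H\"older computation for $S_{r}^{+}$ and the overall exponent bookkeeping are fine once the localisation is repaired. On the non-degeneracy part, your Markov-inequality route via an $\E U$ bound (as in the perimeter theorem) is workable, though the paper argues differently here: it conditions on the explicit events $\{\eta_{\gamma}[Q_{\beta}]=\emptyset\}$ and $\{\eta^{\rho}[Q_{\beta}^{c}]=\emptyset\}$, on which the functional splits exactly, and only needs the probabilities of their complements to be small; this avoids invoking the decay estimate a second time.
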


Remark that  nothing prevents the typical grain of $ \mathcal{B}$ to be unbounded with positive $\mu_{\mathcal{B}} $-probability.

\begin{proof}  In this proof, $\N_{0}$ is chosen to  be the class of $\zeta $ such that for any bounded set $D$, $\zeta  [D]:=\{(y,(L,A))\in \zeta :(y+A)\cap D\neq \emptyset \}$ is finite. Assumption \eqref{ass:abstract-SN} implies that $\eta \in \N_{0}$ a.s. Let  the notation of \eqref{eq:ultimate-assumption-bis} prevail.    Let $r\geqslant 0 .$ Introduce the independent variables
\begin{align*}
S_{r}^{-}=\sum_{(y,(L,A))\in (\eta ' \cap  B_{r})[ {\tilde Q_1}]}\hspace{-1cm} | V | (A),\;\;S_{r}^{+}=\sum_{(y,(L,A))\in (\eta ' \setminus B_{r})[ {\tilde Q_1}]} \hspace{-1cm}| V | (A).\end{align*}We  have a.s.
\begin{align}
\notag \big |
F_{0}(\eta '\cap B )-F_{0}(\eta '\cap B\cap B_{r})
\big |  =&\left|
V(\{f_{\eta '\cap B\cap B_{r} } \geqslant u\} )-V(\{f_{\eta '\cap B }\geqslant u\} )
\right| \\
\label{eq:fixed-level}\leqslant  & 
 \mathbf{1}_{\{   S_{r}^{+}\neq 0  \}}
2(S_{r}^{-}+S_{r}^{+})\leqslant 2\mathbf{1}_{\{S_{r}^{+}\neq 0\}}S_{r}^{-}+2S_{r}^{+}.
%\leqslant \sum_{\x\in \eta'\setminus B_{r} }\psi (\x  )
\end{align}
 Define $\psi  (y,(L,A))= \mathbf{1}_{\{(y+A)\cap \tilde Q_1\neq \emptyset \}}| V | (A)$. Let $(L_{0},A_{0})$ be a random variable with law $\mu $. We have by Cauchy-Schwarz, for $y\in \mathbb{R}^{d},$
\begin{align*}
\E (\psi (y,(L_{0},A_{0})))^{4}\leqslant \sqrt{\E  | V | (A_{0})^{8}}\sqrt{\P ((y+A_{0})\cap \tilde Q_1\neq \emptyset )}\leqslant C(1+\|y\|)^{-\lambda /2}.
\end{align*} 
Hence Lemma \ref{lm:mecke} yields $\sup_{r>0}\E (S_{r}^{-})^{4}\leqslant \E (S_{\infty }^{-})^{4}<\infty $. The same method yields $(\E (S_{r}^{+})^{4})^{1/4}\leqslant C(1+r)^{-\lambda /8+d }$. The same method again but this time with $\psi (y,(L,A))=\mathbf{1}_{\{(y+A)\cap \tilde Q_1=\emptyset \}}$ yields $\P (S_{r}^{+}\neq 0)\leqslant C(1+r)^{-\lambda  +d }.$ Taking the fourth moment and plugging these estimates back in \eqref{eq:fixed-level} yields that  \eqref{eq:ultimate-assumption-bis} and \eqref{eq:ultimate-assumption} hold. 
 
Let us show that Assumption \ref{ass:nondeg} holds. For $\beta >\rho  $, $\P (  | F_{Q_{\beta }}(\eta ^{\rho }) | \geqslant c)>p$.
 If $\eta _{\gamma }[Q_{\beta }]=\emptyset $,
$
F_{Q_{\beta }}(\eta ^{\rho }\cup \eta _{\gamma })=F_{Q_{\beta  }}(\eta ^{\rho }) $
 and if $\eta ^{\rho }[Q_{\beta }^{c}]=\emptyset $,
 $
F_{Q_{\delta }\setminus Q_{\beta }}(\eta ^{\rho }\cup \eta _{\gamma })=F_{Q_{\delta }\setminus Q_{\beta }}(\eta _{\gamma }).
 $
Hence, with $U_{\delta ,\gamma }:=F_{Q_{\delta }}(\eta ^{\rho })-F_{Q_{\delta }}(\eta ^{\rho }\cup \eta _{\gamma })-F_{Q_{\beta }}(\eta ^{\rho })$,
\begin{align*}
\P ( | F_{Q_{\delta }}(\eta ^{\rho })-F_{Q_{\delta }}(\eta ^{\rho }\cup \eta _{\gamma }) | >c/2)\geqslant& \P ( | F_{Q_{\beta }}(\eta ^{\rho }) | >c)-\P ( | U_{\delta ,\gamma } | >c/2)\\
\geqslant& p-\P (\eta ^{\rho }[Q_{\beta }^{c}]\neq \emptyset )-\P (\eta _{\gamma }[Q_{\beta }]\neq \emptyset ).
\end{align*}
Since at fixed $\rho ,$ $\mathbf{1}_{\{\eta ^{\rho }[Q_{\beta }^{c}]\neq \emptyset \}}\to 0$ a.s. as $\beta \to \infty $, fix $\beta $ such that $\P (\eta ^{\rho }[Q_{\beta }^{c}]\neq \emptyset )<p/4. $ Then for $\gamma $ sufficiently large and any $\delta >\gamma $, $\P (\eta _{\gamma }[Q_{\beta }]\neq \emptyset )<p/4$, hence Assumption \ref{ass:nondeg} is satisfied.
 
  \end{proof}

\begin{example}[Volume]
The simplest example is the class  $\mathcal{B}= \mathcal{M}_d $ of measurable subsets of $\mathbb{R}^{d}$, endowed with Lebesgue measure $V(A )=\ell^d(A\cap  \tilde Q_1).$  We  have $F_{W}(\eta ):=\ell^d(\{f_{\eta \cap \tilde W}\geqslant u\}\cap \tilde W)$ a.s..  This example has been treated in a different framework at Section \ref{sec:SN-volume}.

\end{example}

\begin{example}[Perimeter]
Let $\mathcal{B}$ be the class of $A\in \mathcal{M}_d $ such that $\mathcal{H}^{d-1}(\partial A)<\infty $.   Define $V(A)= \mathcal{H}^{d-1}(\partial A\cap \tilde Q_1),$ we prove below that  $F_{W}(\eta )= \mathcal{H}^{d-1}(\{f_{\eta \cap \tilde W}\geqslant u\}\cap \tilde W)$ a.s.. Assume for the moment condition that $\int_{\mathcal{B}}\mathcal{H}^{d-1}(\partial A)^{8}\mu _{\mathcal{B}}(dA)<\infty .$

\end{example}

\begin{example}[Total curvature] 
\label{ex:discs}
Let $d=2$, $\mathcal{B}$ be the class of non-trivial closed discs of $\mathbb{R}^{2}$. A set $A\subset \mathbb{R}^{2}$ is an {\it elementary set} in the terminology of Bierm\'e \& Desolneux \cite{BieDesEuler} if $\partial A$ can be decomposed as a finite union of $\mathcal{C}^{2}$ open curves $C_{j},j=1,\dots ,p$ with respective constant curvatures $\kappa _{j}>0$, separated by corners $x_{i}\in \partial A,i=1,\dots ,q,$ (with $0\leqslant  q\leqslant p$) with angle  $\alpha (x_{i},A)\in (-\pi ,\pi )$. The \emph{ total curvature} of $A$ within some open set $U$ is defined by
\begin{align*}
TC(A;U):=\sum_{j=1}^{p}\kappa _{j}\mathcal{H}^{1}(C_{j}\cap  U )+\sum_{i=1}^{q}\mathbf{1}_{\{x_{i}\in U\}}\alpha (x_{i},A).
\end{align*}Therefore we define $V(A)=TC(A;\tilde Q_{1})$. Via the Gauss-Bonnet theorem, for $W\subset \mathbb{Z} ^{d},$ $TC(A;\text{\rm{int}}(\tilde W))$
 is strongly related to the Euler characteristic of $A\cap \tilde W$, in the sense that they coincide if $A\subset \text{\rm{int}}(\tilde W)$, and otherwise they only differ by boundary terms, see \cite{BieDesEuler}. We will see that $F_{W}(\eta )=TC(\{f_{\eta \cap \tilde W }\geqslant u\};\text{\rm{int}}(\tilde W))$ a.s.. Assume also that the typical radius has a finite moment of order $ 8d$.\end{example}

\begin{proposition}
\label{prop:volumes-token}
In the three previous examples, assume that \eqref{ass:abstract-SN} holds, and that  $\P ( f_{\eta }(0)\geqslant cu)\notin \{0,1\}$ for some $c>0$. Then the functionals $F_{W},F'_{W}$ satisfy the conclusions of theorems \ref{thm:ultimate},\ref{thm:ultimate-2}, in particular, they have  variance of volume order and undergo a central limit theorem as $ | \partialZ W | / | W | \to 0.$
\end{proposition}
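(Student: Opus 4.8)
The plan is to exhibit each of the three examples as a special case of Theorem~\ref{thm:fixed-u-EC} and to verify its hypotheses one by one. Since \eqref{ass:abstract-SN} and the regularity \eqref{eq:ball-like} of $\W$ are granted, and the moment bound $\int_{\mathcal{B}}|V|(A)^{8}\mu_{\mathcal{B}}(dA)<\infty$ holds automatically for the volume and is exactly the integrability assumed in the perimeter and total-curvature examples, the substance lies in three points: the a.s. identification of $F_{W}$ with the announced geometric quantity, the admissibility of a quadruple $(\mathcal{B},\mathcal{B}',V,|V|)$, and the non-degeneracy input $\P(|F_{Q_{\beta}}(\eta^{\rho})|\geqslant c)\geqslant p$. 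With $\N_{0}$ chosen as in the proof of Theorem~\ref{thm:fixed-u-EC}, the field $f_{\eta\cap\tilde W}$ restricted to any bounded window is a finite boolean combination of translated grains, so the identifications $F_{W}(\eta)=\ell^{d}(\{f_{\eta\cap\tilde W}\geqslant u\}\cap\tilde W)$, $=\mathcal{H}^{d-1}(\partial\{f_{\eta\cap\tilde W}\geqslant u\}\cap\tilde W)$, and $=TC(\{f_{\eta\cap\tilde W}\geqslant u\};\mathrm{int}(\tilde W))$ hold almost surely once the corresponding $V$ is fixed.

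I would establish admissibility in increasing order of difficulty. For the volume, set $V(A)=\ell^{d}(A\cap\tilde Q_{1})$ and $|V|(A)\equiv1$: any $A$ built from the grains $A_{i}+y_{i}$ satisfies $A\subseteq\bigcup_{i}(A_{i}+y_{i})$, whence $\ell^{d}(A\cap\tilde Q_{1})\leqslant\ell^{d}(\tilde Q_{1})=1\leqslant\sum_{i}|V|(A_{i})$, with $\mathcal{B}'=\mathcal{M}_{d}$. For the perimeter, set $V(A)=\mathcal{H}^{d-1}(\partial A\cap\tilde Q_{1})$ and $|V|(A)=1+\mathcal{H}^{d-1}(\partial A)$; the key point is the topological inclusion $\partial A\subseteq\bigcup_{i}\partial(A_{i}+y_{i})$, valid for any boolean combination, which yields $V(A)\leqslant\sum_{i}\mathcal{H}^{d-1}(\partial(A_{i}+y_{i})\cap\tilde Q_{1})\leqslant\sum_{i}|V|(A_{i})$. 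In both cases the required moment bound is inherited from the example hypothesis, and the decay \eqref{ass:abstract-SN} supplies $\mathcal{B}'$-membership and local finiteness.

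The genuinely delicate case is the total curvature of Example~\ref{ex:discs}. Here $\mathcal{B}$ is the class of discs and $\mathcal{B}'$ should be the class of \emph{elementary sets} of \cite{BieDesEuler}; the first task is to check that a boolean combination of finitely many discs in general position is again elementary, so that $V(A)=TC(A;\tilde Q_{1})$ is well defined and $\mathcal{B}'$-valued. The subadditive estimate $|TC(A;\tilde Q_{1})|\leqslant\sum_{i}|V|(A_{i})$ is where the $8d$-th moment of the radius is needed. The arc part of the total curvature carried by the circle $\partial(A_{i}+y_{i})$ is controlled by its total turning and by its length inside $\tilde Q_{1}$, hence by a quantity depending on the radius alone; the corner part, carried by the crossings of pairs of circles, is the crux and must be attributed to individual grains through the Gauss--Bonnet decomposition of \cite{BieDesEuler}, with $|V|(B(y,r))$ taken to be a constant multiple of the number of unit cells met by the circle of radius $r$, so that it dominates both the arc budget and the corner budget chargeable to that grain. \textbf{This grain-by-grain accounting of the corner contributions is the main obstacle}, since it is precisely the step where the pathwise bound must be made independent of the rest of the configuration, and it is exactly what the elementary-set formalism is designed to provide; the assumed $8d$-th moment then delivers $\int_{\mathcal{B}}|V|^{8}\mu_{\mathcal{B}}<\infty$.

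It remains to produce the non-degeneracy input from the hypothesis $\P(f_{\eta}(0)\geqslant cu)\notin\{0,1\}$. The two-sided non-triviality means that with positive probability the excursion $\{f_{\eta}\geqslant u\}$ both charges and avoids a neighbourhood of the origin, so that its boundary is non-trivial there. Following the localisation scheme of Lemma~\ref{lm:nontrivial-levelset}, I would condition on a favourable finite configuration $\eta^{\rho}$ inside $\tilde Q_{\rho}$ and use \eqref{ass:abstract-SN} to discard the influence of distant grains, producing for every $\beta>\rho$ a subset of $\tilde Q_{\beta}$ of positive volume, and accordingly of positive perimeter and of non-zero total curvature, with probability bounded below uniformly in $\beta$. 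This yields $\P(|F_{Q_{\beta}}(\eta^{\rho})|\geqslant c)\geqslant p$, so all hypotheses of Theorem~\ref{thm:fixed-u-EC} are met and its conclusion transfers the statements of Theorems~\ref{thm:ultimate} and \ref{thm:ultimate-2} to $F_{W}$ and $F_{W}'$.
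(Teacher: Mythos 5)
Your overall architecture coincides with the paper's: both reduce each example to Theorem~\ref{thm:fixed-u-EC} by exhibiting an admissible quadruple $(\mathcal{B},\mathcal{B}',V,|V|)$ and then supplying the non-degeneracy input $\P(|F_{Q_{\beta}}(\eta^{\rho})|\geqslant c)\geqslant p$. For the volume and the perimeter your choices ($|V|\equiv 1$, resp.\ $|V|(A)=1+\mathcal{H}^{d-1}(\partial A)$ together with $\partial A\subset\bigcup_{i}(\partial A_{i}+y_{i})$) are essentially the paper's, and the non-degeneracy via Lemma~\ref{lm:nontrivial-levelset} plus ``positive finite volume implies positive perimeter'' is exactly what the paper does. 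The total-curvature example, however, contains two genuine gaps.

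First, you leave the subadditivity $|TC(A;\tilde Q_{1})|\leqslant\sum_{i}|V|(A_{i})$ unproved and explicitly flag the grain-by-grain accounting of the corner terms as the main obstacle; that is indeed the crux, since the corners of a boolean combination of $q$ discs sit on pairwise intersections of circles, so a per-grain budget must absorb up to $O(q^{2})$ angle contributions, and neither your radius-dependent $|V|$ nor the paper's choice (the paper simply sets $|V|\equiv 1$ and asserts admissibility) obviously achieves this without an argument. An acknowledged obstacle is not a proof, so this step is missing. Second, and more seriously, your non-degeneracy argument for the total curvature is incorrect as stated: a set of positive volume and positive perimeter can perfectly well have zero total curvature in a window --- an annulus, or any configuration with vanishing Euler characteristic, gives cancelling boundary contributions --- so ``positive volume, and accordingly \dots non-zero total curvature'' is a non sequitur. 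The paper avoids this by constructing an explicit event of positive probability (exactly $k$ marked discs with centres close to the origin, with $\sum_{i\leqslant k}L_{i}\geqslant u$ and $\sum_{i\leqslant k-1}L_{i}<u$) on which the excursion $\{f_{\eta^{\rho}}\geqslant u\}$ is a non-empty intersection of discs, hence convex with a fixed positive total curvature. You need a convexity or explicit Euler-characteristic argument of this kind; it cannot be deduced from the volume and perimeter cases.
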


\begin{proof}
All proofs rely on defining an admissible quadruple that satisfies the assumptions of Theorem \ref{thm:fixed-u-EC}, and show that the variance assumption holds. We only treat the case $u\geqslant 0$, the case $u\leqslant 0$ can be treated similarly. Let $\Gamma _{k}=k+\tilde Q_1,k\in \mathbb{Z} ^{d}.$

   (Volume) Defining $\mathcal{B}'=\mathcal{M}_d ,| V|  (A)=\ell^d(A)$ yields an admissible quadruple $(\mathcal{B},\mathcal{B}',V,| V|  )$. In the case $u>0$, the fact that $\mathbf{1}_{\{f_{\eta }(0)>0\}}$ is not trivial yields that \eqref{eq:ass-nontrivial-volume} holds, and hence using Lemma \ref{lm:nontrivial-levelset},  $\P (   F_{Q_{\beta }}(\eta ^{\rho }) \geqslant c)\geqslant \P (   F_{Q_{\rho  }}(\eta ^{\rho }) \geqslant c) =:p>0$ holds for some $\rho , c>0$, and for $\beta >\rho $.  The case $u=0$ can be treated directly and is left to the reader.

 (Perimeter)   Let $\mathcal{B}'$ be the class of $A\in \mathcal{B}$ such that $\mathcal{H}^{d-1}(\partial A\cap \partial \Gamma _{k})=0 $ for $k\in \mathbb{Z} ^{d}$.   For $A\in \mathcal{B}$, for a.a. $y \in \mathbb{R}^{d}$, $\mathcal{H}^{d-1}(\partial (A+y)\cap \partial \Gamma _{k})=0$. 
 Hence for $A_{1},\dots ,A_{q}\in \mathcal{B}$, for a.a. $y_{1},\dots ,y_{q}\in \mathbb{R}^{d}$, any  set $A$ obtained by sequentially adding, intersecting or removing the $A_{i}+y_{i}$ is in $\mathcal{B}'$, using  
$\partial  A \subset \cup _{i=1}^{n}(\partial  A_{i}+y_{i})$. Defining $| V|  (A):=\mathcal{H}^{d-1}(\partial A)
$ yields an admissible quadruple $(\mathcal{B},\mathcal{B}',V,| V|  )$. The justification that   $\var(F_{Q_{\beta }}(\eta ^{\rho }))>0$ holds is the same as for the volume (above), because a set with positive volume has positive boundary measure.

 (Total curvature) Let $\mathcal{B}'$ be the class of sets obtained from finite unions, intersections and removals of discs $A_{1},\dots ,A_{q}$ such that for $i\neq j$, $A_{i}$ and $A_{j}$ are not tangent and $\partial A_{i}\cap \partial A_{j}\cap \partial \Gamma _{k}=\emptyset $ for $k\in \mathbb{Z} ^{d}$.   Every $A\in \mathcal{B}'$ is elementary, and  defining $| V|  \equiv 1$ yields that $(\mathcal{B},\mathcal{B}',V,| V|  )$ is an admissible quadruple. 
Let  $X_{i}=(Y_{i},(L_{i},D_{i})),i\geqslant 1,$ iid marked couples of discs with iid uniform centers $Y_{i}$ in $B(0,1)$. Let $k\in \mathbb{N}$ be such that the event $\Gamma =(\sum_{i=1}^{k}L_{i}\geqslant u,\sum_{i=1}^{k-1}L_{i}<u)$ has positive probability. Conditionally on $\Gamma $, $\{f_{\{X_{1},\dots ,X_{k}\}}\geqslant u\}=\cap _{i=1}^{k} (Y_{i}+D_{i})$. Since the $D_{i}$ have positive radii, the probability that the $Y_{i},i=1,...,k$ are sufficiently close to $0$ such that this set is non-empty is also positive. In this case it is the intersection of discs, hence its total curvature is equal to $1$, and $\P  (F_{Q_{\beta }}(\eta ^{\rho }) \geqslant 1)\geqslant p>0$ is satisfied for some $\rho >0$ and $\beta >\rho $.

\end{proof}

With a similar route, the previous example can likely be generalised to more general classes of sets $\mathcal{B}$ in higher dimensions, such as the polyconvex ring, provided one can estimate properly the curvature or the Euler characteristic on sets from $\mathcal{B}'$.

\section{Proofs}
Recall that $\kappa $ denotes a constant which depends on $d, \alpha ,a_{-},a_{+}$   and whose value might change from line to line. 
The following lemma is useful several times in the paper.   
\begin{lemma} 
\label{lm:mecke}Let $\alpha >d,C_{0}\geqslant 0$, $M_{i},0\leqslant i\leqslant 4$ be
independent marks  with law $\mu $. Let $r>0, \psi :\overline{\mathbb{R}^{d}} \times \N\to \mathbb{R}_{+}$ be a measurable function such that for $\ell^d$-a.e. $x_{i} \in \mathbb{R}^{d},0\leqslant i\leqslant 4,$   and $\zeta \subset \{ (x_{i},M_{i}),i=1,\dots ,4\}$,
$ 
\left(
\E  \psi ( (x_{0},M_{0}),\eta\cup \zeta  )^{4}
\right)^{1/4}\leqslant C_{0} (1+\|x_{0}\|)^{-\alpha -d}
.$  Then 
\begin{align*}
\left(
\E\left|
\sum_{\x\in \eta \setminus B_{r}}\psi (\x;\eta )
\right|^{4}
\right)^{1/4}\leqslant C_{0} \kappa (1+r)^{- \alpha }.
\end{align*} 
\end{lemma}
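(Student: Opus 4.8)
The plan is to expand the fourth power of $S=\sum_{\x\in\eta\setminus B_r}\psi(\x,\eta)$, convert the resulting sums over tuples of points of $\eta$ into integrals by the multivariate Mecke formula, and then bound each integrand by the hypothesis after a single H\"older step. Since $\psi\ge 0$, every quantity below is nonnegative, so Tonelli's theorem and the Mecke formula apply with no integrability precautions.

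First I would expand
\[
S^4=\sum_{\x_1,\x_2,\x_3,\x_4\in\eta\setminus B_r}\ \prod_{i=1}^4\psi(\x_i,\eta),
\]
the sum ranging over ordered $4$-tuples of points of $\eta\setminus B_r$ with repetitions allowed. Grouping the tuples according to which indices coincide, i.e. according to a partition $\pi$ of $\{1,2,3,4\}$ into $m=|\pi|\le 4$ blocks, decomposes $\E[S^4]$ into at most $B_4=15$ terms, each equal to $\E\big[\sum\prod_{i=1}^4\psi(\y_{\pi(i)},\eta)\big]$, where $\pi(i)$ denotes the block of $i$ and the inner sum runs over $m$-tuples of pairwise distinct points $\y_1,\dots,\y_m$ of $\eta\setminus B_r$.

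Then I would apply the multivariate Mecke formula to each term, which replaces the sum over distinct points $\y_1,\dots,\y_m$ of $\eta$ by
\[
\int_{(\overline{\mathbb{R}^d})^m}\mathbf{1}_{\{\y_1,\dots,\y_m\notin B_r\}}\ \E\Big[\prod_{i=1}^4\psi(\y_{\pi(i)},\eta\cup\{\y_1,\dots,\y_m\})\Big]\,d\y_1\cdots d\y_m.
\]
The key point is that Mecke inserts all $m\le 4$ integration points into the configuration, so each factor $\psi(\y_{\pi(i)},\eta\cup\{\y_1,\dots,\y_m\})$ is exactly of the form controlled by the hypothesis, with base point $\y_{\pi(i)}$ and added set $\zeta=\{\y_1,\dots,\y_m\}$ of at most four marked points (whose marks, integrated against $\mu$ via $d\y_i=dy_i\,\mu(dm_i)$, play the role of the independent marks $M_i$); this is precisely why a fourth moment bound needs the hypothesis for $\zeta$ up to four points. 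Applying the generalized H\"older inequality (exponents $4,4,4,4$) to the expectation of the product, then the hypothesis to each factor, bounds the integrand by $C_0^4\prod_{i=1}^4(1+\|y_{\pi(i)}\|)^{-\alpha-d}$.

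Finally I would integrate. As $B(0,a_-r)\subset B_r$, each variable obeys $\|y_l\|>a_-r$, and a block of size $b$ contributes $\int_{\|y\|>a_-r}(1+\|y\|)^{-b(\alpha+d)}\,dy\le\kappa(1+r)^{-b\alpha-(b-1)d}$, the integral converging since $\alpha>d$. Taking the product over the $m$ blocks, whose sizes sum to $4$, gives the bound $\kappa(1+r)^{-4\alpha-(4-m)d}\le\kappa(1+r)^{-4\alpha}$ for every partition, the dominant contribution being the diagonal-free one $m=4$. Summing the finitely many partition terms yields $\E[S^4]\le\kappa C_0^4(1+r)^{-4\alpha}$, and a fourth root gives the claim. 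I expect the main difficulty to be organizational rather than analytic: keeping the partition bookkeeping straight and verifying that, once Mecke has placed the integration points inside the configuration, each factor genuinely matches the hypothesis (including, for a diagonal block, the base point sitting among the added points of $\zeta$). The sole analytic ingredient, the tail estimate $\int_{\|y\|>a_-r}(1+\|y\|)^{-\alpha-d}dy\le\kappa(1+r)^{-\alpha}$, is routine once $\alpha>d$ is in force.
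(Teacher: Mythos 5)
Your proposal is correct and follows essentially the same route as the paper: expand the fourth power, group ordered $4$-tuples by coincidence pattern (the paper indexes these by compositions $(m_1,\dots,m_q)$ of $4$ rather than partitions, which is equivalent), apply the multivariate Mecke formula, use generalized H\"older plus the hypothesis on each factor, and integrate radially outside $B(0,a_-r)$ to get $(1+r)^{-4\alpha-(4-m)d}\leqslant(1+r)^{-4\alpha}$ per term. The subtlety you flag at the end — that after Mecke the base point itself sits inside the inserted configuration — is present in the paper's proof as well and is handled there in exactly the same implicit way.
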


 {
\allowdisplaybreaks
 \begin{proof}Let $\eta _{r}=\eta \setminus B_{r}$. Let $\x_{i}=(x_{i},M_{i})$. Let $\mathcal{P}_{4} $ be the family of ordered tuples of natural integers which sum is $4$. The multi-variate Mecke formula yields
\begin{align*} 
\E&\left[
\sum_{\x\in \eta_r }\psi (\x;\eta )
\right]^{4}\leqslant \kappa  \sum_{(m_{1},\dots ,m_{q})\in \mathcal{P}_{4}}\E\left[
\sum_{ (\x_{1},\dots ,\x_{q})\in  \eta_r ^{q} }\psi (\x_{1};\eta )^{m_{1}}...\psi (\x_{q};\eta )^{m_{q}}
\right]\\ 
\leqslant &\kappa \sum_{ (m_{1},\dots ,m_{q})\in \mathcal{P}_{4}} \int_{(\overline{B_{r}^{c}})^{q }}\E\left[
\prod_{l=1}^{q} \psi (\x_{l},\eta \cup \{\x_{1},\dots ,\x_{q}\} )^{m_{l}}
\right] d\x_{1}\dots d\x_{q }\\
\leqslant&\kappa \sum_{ (m_{1},\dots ,m_{q})\in \mathcal{P}_{4}} \int_{(\overline{B_{r}^{c}})^{q}}\prod_{l=1}^{q}(\E \psi (\x_{l},\eta  \cup  \{\x_{1},\dots ,\x_{q}\})^{4} )^{m_{l} /4}d\x_{1}\dots d\x_{q}\\
\leqslant  &\kappa \sum_{ (m_{1},\dots ,m_{q})\in \mathcal{P}_{4}} \prod_{l=1}^{q}\kappa \int_{B_{r}^{c}}C_{0}^{m_{l} }(1+\|x_{l}\|)^{-m_l(\alpha+d) }d x_{l}\\
\leqslant &\kappa \sum_{ (m_{1},\dots ,m_{q})\in \mathcal{P}_{4}} C_{0}^{4}\prod_{l=1}^{q}\kappa \int_{a_{-}r}^{\infty }(1+t)^{-m_{l} (\alpha+d) }t^{d-1}dt\\
\leqslant &\kappa C_{0}^{4}\sum_{ (m_{1},\dots ,m_{q})\in \mathcal{P}_{4}} (1+r)^{-4(\alpha+d) +qd}\leqslant \kappa C_{0}^{4}(1+r)^{-4\alpha } .
\end{align*}
\end{proof}
}

\subsection{Proof of Theorem \ref{thm-variance}}
 \label{sec:proof-variance} 
 
 We prove \eqref{eq:cov-bound} under Assumption \eqref{ass:upper-variance-2} (i.e. in case {\bf (i)}).  Remark first  that  \eqref{ass:upper-variance-2} trivially holds also for $B\in \mathcal{B}_{W}^{s}\setminus \mathcal{B}_{W}^{r},s< r.$ Also, if \eqref{ass:upper-variance-2} is satisfied, with $B=\mathbb{R}^{d}$, \eqref{ass:upper-variance} is also satisfied. Assume without loss of generality $F_{0}(\emptyset )=0,$ then \eqref{ass:upper-variance-2} with $r=0$ yields  
\begin{align*}
m_{2}:=&  \sup_{ k\in W}\E |  F_{k}^{W}(\eta ) | ^{2} =\sup_{k\in W} \E | F_{0}((\eta \cap \tilde W)-k)-F_{0}(((\eta \cap \tilde W)-k)\cap B_{0}) |^{2} \leqslant \kappa C_{0}^{2}  <\infty .
\end{align*}

 The following inequality is useful several times in the proof: given some square-integrable random variables $Y_{i},Z_{i},i=1,2$ on $\Omega $, and a $\sigma $-algebra $\mathcal{Z}\subset \mathscr{A}$,   
 \begin{align}
\notag\E&\left|
\cov(Y_{1},Y_{2} | \mathcal{Z})-\cov(Z_{1},Z_{2} | \mathcal{Z})
\right|\\
\notag\leqslant &\E\left(
 \sqrt{ 2\E (Y_{1}^{2} | \mathcal{Z})}\sqrt{2\E ((Z_{2}-Y_{2})^{2} | \mathcal{Z}) }+
  \sqrt{2\E(Z_{2}^{2} | \mathcal{Z})}\sqrt{2\E ((Z_{1}-Y_{1})^{2} | \mathcal{Z}) }
\right) \\
\label{eq:approx-cov}\leqslant &2 \left(
 \sqrt{\E Y_{1}^{2}   }\sqrt{\E (Z_{2}-Y_{2})^{2} }+
  \sqrt{\E  Z_{2}^{2}  }\sqrt{\E (Z_{1}-Y_{1})^{2} }
\right) .
\end{align}

Let $B_{r}(k)=k+B_{r}$ for $k\in \mathbb{Z} ^{d},r\geqslant 0.$
  Let $k,j\in W ,r=\|k-j\|/(3a_{+})$, $\eta',\eta '' $  independent copies of $\eta $, and 
\begin{align*}
  \eta _{k}=(\eta \cap B_{r}(k))\cup (  \eta' \cap B_{r}(k)^{c}),\;\;\;\eta _{j}=(\eta \cap  B_{r} (j))\cup (  \eta'' \cap B_{r}(j)^{c}),
\end{align*} which are  processes distributed as $\eta $, independent since $B_{r}(k)\cap B_{r}(j)=\emptyset $.  Since $\eta \cap B_{r}(k)=\eta _{k}\cap B_{r}(k)$, \eqref{ass:upper-variance-2} yields
\begin{align*}
F_{k}^{W}(\eta )-F_{k}^{W}(\eta _{k})=&F_{k}^{W}(\eta )-F_{k}^{W}(\eta \cap B_r(k))+F_{k}^{W}(\eta _{k}\cap B_r(k))-F_{k}^{W}(\eta _{k})\\
\E\left|
F_{k}^{W}(\eta  )-F_{k}^{W}(\eta_{k}  )
\right|^{2}\leqslant &2\big(
\E\left|
F_{0} ((\eta-k )\cap( \tilde W-k))-F_{0}((\eta-k )\cap ( \tilde W-k)\cap B_{r}  )
\right|^{2}\\
&+\E\left|
F_{0}((\eta_{k}-k )\cap ( \tilde W-k)\cap B_{r}  )-F_{0} ((\eta _{k}-k)\cap ( \tilde W-k) )
\right|^{2}
\big)\\
\leqslant&  \kappa C_{0}^{2}(1+ r )^{-2\alpha },
\end{align*} because $\eta _{k}-k\equlaw \eta -k\equlaw \eta $. A similar  bound holds  for $F_{j}^{W}$. Then,
\eqref{eq:approx-cov} yields 
\begin{align}
\notag
\bigg|
\cov(F^W _{k} (\eta )&,F^{W} _{j}(\eta ) )-\underbrace{\cov(F^W _{k} (\eta _{k}),F^{W} _{j} ( \eta_{j}))}_{=0}
\bigg|\leqslant  \kappa \sqrt{\E F^{W} _{j}(\eta )^{2 }}\sqrt{ \E \left|
F^W _{k}(\eta ) -F^W _{k}( \eta_{k} ) 
\right|^{2}}\\
\notag&\hspace{4cm}+  \kappa \sqrt{\E (F^W _{k}(\eta  _{k}))^{2 }}\sqrt{ \E \left|
F^{W  } _{j}(\eta ) -F^{W} _{j}(\eta _{j} ) 
\right|^{2}}\\ 
\label{eq:cov-Fq}\leqslant & \kappa \sqrt{m_{2}}\sqrt{C_{0}^{2}(1+r)^{-2\alpha }}
\leqslant \kappa C_{0}^{2}(1+\|k-j\|)^{-\alpha }.
\end{align} 
Hence \eqref{eq:cov-bound} is proved in case {\bf (i)}.     If $G_{k}^{W}=F_{k}$  and \eqref{ass:upper-variance} is assumed instead of \eqref{ass:upper-variance-2} (case {\bf(i')}), replacing $W$ by $\mathbb{Z} ^{d}$ in the computation above
yields the same bound  for $\cov(F_{k},F_{j})$. The finiteness of $\sigma _{0}$ follows from $\alpha >d$.

Let us  now assume $ | W | <\infty $ and show \eqref{eq:variance-exact-asymp}. Let $k\in W,r=d(k,\tilde W^{c})/a_{+}$, so that $B_{ r}\cap (\tilde W-k)=B_{r}$. We have if \eqref{ass:upper-variance-2} holds \begin{align*}
  F_{k}^{W}-F_{k} &=  F_{0}((\eta -k)\cap (\tilde W-k) )-F_{0}((\eta -k)\cap (\tilde W-k)\cap B_{r})   +   F_{0}((\eta -k)\cap B_{r})-F_{0}(\eta -k)   \\
 \E| F_{k}^{W}-F_{k} |^{2}&\leqslant \kappa  C_{0}^{2}(1+r)^{-2\alpha }\leqslant \kappa C_{0}^{2}(1+d(k,\tilde W^{c}))^{-2\alpha }.
\end{align*}  We hence have  by \eqref{eq:approx-cov}, for $k,j\in W$, recalling also \eqref{eq:cov-Fq},
\begin{align}
\notag
\left|
\cov(F_{k}^{W},F_{j}^{W})-\cov(F_{k},F_{j})
\right|\leqslant& \kappa C_{0}^{2}(1+\min(d(k,\tilde W^{c}),d(j,\tilde W^{c})))^{-\alpha }\\
\label{eq:diff-covFkFkW}\leqslant &\kappa C_{0}^{2}(1+\max(\|k-j\|,\min(d(k,\tilde W^{c}),d(j,\tilde W^{c}))))^{-\alpha }.
\end{align}Denote by $[x]$ the integer part of $x\in \mathbb{R}.$ Let $ d_{ {W}}\in \mathbb{N}\setminus\{0\}$, $W_{m}=\{k\in W:[d(k,\tilde W^{c})]=m\}\text{\rm{ for }}m\in \mathbb{N},W_{\partial}=\{k\in W:[d(k,\tilde W^{c})]\leqslant  d_{ {W}}\}=\cup _{m=0}^{d_{W}}W_{m},W_{int}=W\setminus W_{\partial}$.  
 We have, using \eqref{eq:cov-bound} and \eqref{eq:diff-covFkFkW}, 
\begin{align*}
 | \var(F_{W})&-\sigma _{0}^{2} | W |  | =\left|
\sum_{k\in W,j\in W}\cov(F_{k}^{W},F_{j}^{W})-\sum_{k\in W,j\in \mathbb{Z} ^{d}}\cov(F_{k} ,F_{j} )
\right|\\
\leqslant&  \sum_{k\in W,j\notin W}  | \cov(F_{k} ,F_{j} ) | +2\hspace{-0.5cm}\sum_{k,j\in W:d(k,\tilde W^{c})\leqslant d(j,\tilde W^{c})} \hspace{-0.5cm}\left | 
 \cov(F_{k}^{W},F_{j}^{W})- \cov(F_{k},F_{j})
\right | \\
 \leqslant &  \sum_{m=0}^{\infty }\sum_{k\in W_{m}}\left[
\sum_{j\in W^{c}}\kappa C_{0}^{2}(1+\|k-j\|)^{-\alpha }+2\sum_{j\in W }\kappa C_{0}^{2}(1+\max(\|k-j\|,m))^{- \alpha } 
\right]\\
 \leqslant & \kappa C_{0}^2 \sum_{m=0}^{\infty }\sum_{k\in W_{m}}\left(3\sum_{j\in B(k,m)^{c}} (1+\|k-j\|)^{-\alpha }  +2
\sum_{j\in B(k,m)} (1+m)^{- \alpha  } \right)\\ 
 \leqslant & \kappa C_{0}^2 \sum_{m=0}^{\infty }\sum_{k\in W_{m}}\left(3\kappa  (1+m)^{-\alpha +d}  +2\kappa 
 m^{d}(1+m)^{-\alpha } \right)\\ 
 \leqslant &\kappa C_{0}^2 \left(
 | W_{\partial} | +d_{W}^{-\alpha +d} | W_{int} | 
\right)
\end{align*}hence 
$ \left|
\frac{  \var(F_{W})}{ | W | }-\sigma _{0}^{2}
\right|\leqslant \kappa C_{0}^{2}\left(
\frac{d_{W}^{d} | \partialZ W | }{ | W | }+d_{W}^{-\alpha +d}
\right).$
 Equation \eqref{eq:variance-exact-asymp} follows  {by taking $d_{{W}}=[( | W | / | \partialZ W | )^{\frac{1}{\alpha }}]$. The same computation where $F_{k}^{W}$ is replaced by $F_{k}$ (hence with no second term on the second line), treats the case {\bf (i')}, without requiring \eqref{ass:upper-variance-2}.} 

Let us now prove that under the current assumptions, Assumption \ref{ass:nondeg} implies $\sigma _{0}>0$. Recall the notation
 % $Q_{a }(r)=a+Q_{r},\tilde Q_{a}(r)=a+Q_{r}.$ 
 $\eta _{a}=\eta \cap \tilde Q_{a}^{c},\eta _{a}^{b}=\eta_{a} \cap \tilde Q_{b},a,b>0$. Let $\delta   >0$, and decompose $W$ in the finite disjoint union of subparts with sidelength $\delta $: $W=\cup _{k\in \mathbb{Z} ^{d}}W^{(k)}$ where $W^{(k)}=W\cap (\delta k +Q_{\delta } )$. Decompose accordingly $F_{W} =\sum_{k\in \mathbb{Z} ^{d}}F^{(k)}$
where $F^{(k)}=\sum_{j\in W^{(k)}}F_{j}$. Let $\gamma<\delta ,$ and condition by the points of $\eta $ $\gamma $-close to the boundary of a $W^{(k)}$:
 $
\eta_{\gamma } ^*=\eta \cap \tilde  Q_{\gamma }^*$ where $ \tilde  Q_{\gamma }^*=\mathbb{R}^{d}\setminus (\cup _{k\in \mathbb{Z} ^{d}} (\delta k +\tilde Q_{\gamma }) ).
 $ Denote by $\E _{\eta _{\gamma }^{*}},\var_{\eta _{\gamma }^{*}}$ and $\cov_{\eta _{\gamma }^{*}}$ the conditional expectation, variance, and covariance with respect to $\eta _{\gamma }^{*}.$
We have 
\begin{align}
\label{eq:var-lb-1}
\text{\rm{\var}}(F_{W} )\geqslant \E[ \Var_{\eta_{\gamma } ^*}(F_{W})] \geqslant \sum_{k\in \mathbb{Z} ^{d}}\E [\var_{\eta_{\gamma } ^*}(F^{(k)})]-\sum_{k\neq j}\E |  \cov_{\eta_{\gamma } ^*}(F^{(k)},F^{(j)}) | .
\end{align}
  We claim (and prove later) that for $k\in \mathbb{Z} ^{d}$ \begin{align}
\label{eq:cov-lb}
\E \sum_{j\neq k} | \cov_{\eta_{\gamma } ^*}(F ^{(k)},F^{(j)}) | \leqslant C ' \delta ^{2d}(\delta -\gamma)   ^{  -\alpha }.
\end{align}
For the first term of \eqref{eq:var-lb-1}, among the $k\in \mathbb{Z} ^{d}$ such that $W^{(k)}\neq \emptyset $, call $W^{\delta ,int}$ those such that $W^{(k)}-k\delta =Q_{\delta }$, and $W^{\delta ,\partial}$ the others.  We have, using also \eqref{eq:approx-cov},
\begin{align}
\notag\sum_{k\in \mathbb{Z} ^{d}}\E[ \var_{\eta_{\gamma } ^*}(F^{(k)})]\geqslant& \sum_{k\in W^{\delta ,int}}\E [\var_{\eta_{\gamma } ^*}(F^{(k)})]-2\sum_{k\in   W^{\delta ,\partial}}\E [(F^{(k)})^{2}]\\
\label{eq:var-lb-2}
\geqslant &| W^{\delta ,int}|\E[ \var_{\eta_{\gamma } ^*}(F_{Q_{\delta  }})]-2| W^{\delta ,\partial}| \delta  ^{d}m_{2}
\end{align}because by stationarity, for $k\in W^{\delta ,int}$,  $\E[ \var_{\eta_{\gamma } ^*}(F^{(k)})]=\E [\var_{\eta_{\gamma } ^*}(F_{Q_{\delta   }})]$.
%, and then $\eta_{\gamma } ^* $ is $ \sigma (\eta _{\gamma })$-measurable because Assumption  \ref{ass:nondeg} yields $\gamma >\rho $   such that for $\delta >\gamma $ arbitrarily large, with $U_{\gamma ,\delta }= \P _{\eta _{\gamma }}(\left|
%F_{Q_{\delta }}(\eta ^{\rho }\cup \eta _{\gamma })-F_{Q_{\delta }}(\eta _{\gamma })
%\right| $, $\E U_{\gamma ,\delta }>p.$ Since $0\leqslant U_{\gamma ,\delta }\leqslant 1$, $\P (U_{\gamma ,\delta }>1/p)>p$.
 %hence since $\P _{\eta_{\gamma }}(\eta ^{\rho }=\emptyset )=e^{- | \tilde Q^{\rho } | }>0$, $\E \var_{\eta _{\gamma }}(F_{Q_{\delta }}(\eta ^{\rho }\cup \eta _{\gamma }))\geqslant v_{\gamma }$ for some $v_{\gamma }>0$. 
 Recall that any real random variable $U$ satisfies $\var(U)=\inf_{z\in \mathbb{R}}\E (U-z)^{2}$. Since $\tilde  Q_{\gamma }^*\subset Q_{\gamma }^{c}$, $\eta_{\gamma } ^*\in \sigma (\eta _{\gamma }),$ hence for $\rho \in (0,\gamma )$, \begin{align*}
\E[ \var_{\eta_{\gamma } ^*}(F _{Q_{\delta }})]\geqslant &\E[ \var_{\eta _{\gamma }}(F_{Q_{\delta   }})]=\E \left[
\inf_{z\in \mathbb{R}}\E _{\eta _{\gamma }}(F_{Q_{\delta   }}-z)^{2}\right]\\
\geqslant & \E 
\left[
\inf_{z\in \mathbb{R}}\E_{\eta _{\gamma }} [ 1_{\eta _{\rho }^{\gamma }=\emptyset }(F_{Q_{\delta  }}-z)^{2} ]
\right] =\P (\eta _{\rho }^{\gamma }=\emptyset )\E \left[
\inf_{z\in \mathbb{R}}\E_{\eta _{\gamma }} [( F_{Q_{\delta   }}(\eta ^{\rho }\cup \eta _{\gamma })-z)^{2}]
\right]\\
=&\P (\eta _{\rho }^{\gamma }=\emptyset )\E\left[
 \var_{\eta _{\gamma }}[F_{Q_{\delta   }}(\eta ^{\rho }\cup \eta_{\gamma })]
\right]
%\geqslant   \P (\eta _{\rho }^{\gamma }=\emptyset )v_{\gamma  }
%\geqslant \P (\eta _{\rho }^{\gamma }=\emptyset )\E \var_{\eta _{\gamma }}(F_{Q_{\delta }}(\eta ^{\rho }\cup \eta _{\gamma  })).
\end{align*}
where the second equality is true because $\eta _{\rho }^{\gamma } ,\eta ^{\rho },\eta _{\gamma  }$ are independent and $1_{\{\eta _{\rho }^{\gamma }=\emptyset\} }F_{Q_{\delta }} =\mathbf{1}_{\{\eta _{\rho }^{\gamma }=\emptyset \}}F_{Q_{\delta }}(\eta ^{\rho }\cup \eta _{\gamma  })$. Up to increasing $\delta $, let $0<\rho <\gamma $ be like in Assumption \ref{ass:nondeg}, which yields  $v_{\gamma }>0$ such that for arbitrarily large $\delta >\gamma $, $\E[\var_{\eta _{\gamma }}(F_{Q_{\delta }}(\eta ^{\rho }\cup \eta _{\gamma }))]\geqslant v_{\gamma }$. By \eqref{eq:cov-lb}, \eqref{eq:var-lb-1} and \eqref{eq:var-lb-2} for $\delta >\gamma   $ sufficiently  large
\begin{align*} \var(F_{W})\geqslant &|W^{\delta ,int}|\P (\eta _{\rho }^{\gamma }=\emptyset )v_{\gamma }-2|W^{\delta ,\partial}|\delta ^{d}m_{2}-(|W^{\delta ,int}|+| W^{\delta ,\partial}|)C'\delta ^{2d}(\delta -\gamma )^{-\alpha }\\
\geqslant &|W^{\delta ,int}|(\P (\eta _{\rho }^{\gamma }=\emptyset )v_{\gamma }-C'\delta ^{2d}(\delta -\gamma )^{-\alpha })-  | W^{\delta ,\partial}|(2\delta ^{d}m_{2}+C'\delta ^{2d}(\delta -\gamma )^{-\alpha }).
\end{align*}
Since $\alpha >2d$, given any $\gamma $, one can choose $\delta =:\delta_{\gamma } $ such that $C'\delta ^{2d}(\delta -\gamma )^{-\alpha }<\varepsilon _{\gamma }:=\P (\eta _{\rho }^{\gamma  }=\emptyset )v_{\gamma }/2$. Hence $\var(F_{W})\geqslant  | W^{\delta ,int}| \varepsilon _{\gamma }- |  W^{\delta ,\partial} | (2\delta ^{d}m_{2}+\varepsilon _{\gamma }).$  To conclude, let a sequence $\{W_{n};n\geqslant 1\}$ be such that $\lim_{n} | \partial _{\mathbb{Z} ^{d}}W_{n} | / | W_{n}  | = 0$. Since $ | \partialZ W_{n} | / | W_{n} | \geqslant  | W_{n}^{\delta ,\partial } | /(\delta ^{d} (| W_{n} ^{\delta ,int}|+ | W_{n}^{\delta ,\partial } |)  )$, \eqref{eq:variance-exact-asymp} yields
\begin{align*}
\sigma _{0}=\liminf_{n} | W_{n} | ^{-1}\Var(F_{W_{n}})\geqslant  \liminf_{n} (\delta ^{d}| W_{n}^{\delta ,int} |) ^{-1}\var(F_{W_{n}})>0.
\end{align*} 

 Let us finally prove \eqref{eq:cov-lb}.   Let $k\neq j\in \mathbb{Z} ^{d}, l\in W^{(k)},m\in W^{(j)},r=\|j-k\| (\delta -\gamma )/(2a_{+})$. Let $\eta ',\eta ''$  independent copies of $\eta $, and define
\begin{align*}
\eta _{l}=&(\eta \cap B_r(l) )\cup ( \eta' \cap B_{r }(l)^{c}),\;\;\;\eta _{m}= (\eta \cap  B_{r}(m))\cup (\eta'' \cap B_{r}(m)^{c}).
\end{align*}
Since $ B_r(l) \cap B_{r }(m)\subset \tilde  Q_{\gamma }^*$, $\eta _{l}$ and $\eta _{m}$ are independent conditionally to $\eta_{\gamma } ^*,$ and we have by \eqref{eq:approx-cov}, with a computation similar to \eqref{eq:cov-Fq},
 $
\E  | \cov_{\eta_{\gamma } ^*}(F_{l},F_{m})- {\cov_{\eta_{\gamma } ^*}(F_{l}(\eta _{l}),F_{m}(\eta _{m}))}  | \leqslant \kappa C_{0}^{2}(1+r)^{-\alpha }. $
It follows that  
\begin{align*}
\E \left|\cov_{\eta_{\gamma } ^*}(F ^{(k)},F ^{(l)})
\right|\leqslant \E \sum_{l\in W^{(k) },m\in W^{(j) }} | \cov_{\eta_{\gamma } ^*}(F_{l},F_{m}) | \leqslant \kappa C_{0}^{2}\delta ^{2d}(1+r )^{- \alpha }
\end{align*}
and, for some $C'$ not depending on $W,$ for $k\in W,$
\begin{align*}
\E \sum_{j\in \mathbb{Z} ^{d}\setminus \{k\}}\left|
\cov_{\eta _{\gamma }^{*}}(F^{(k)},F^{(j)})
\right|\leqslant \kappa C_{0}^{2} \delta ^{2d} \sum_{p=1}^{\infty }p^{d-1  }( \|p\| (\delta -\gamma ))^{-\alpha } \leqslant C '\delta  ^{2d}(\delta -\gamma )^{-\alpha }.
\end{align*} This concludes the proof of \eqref{eq:cov-lb} and hence of $\sigma _{0}>0.$
 
It remains to prove  \eqref{eq:moments-upper-bound}. Assume that \eqref{ass:upper-variance}  holds with $\alpha >2d$. The proof when instead  \eqref{ass:upper-variance-2} holds is exactly the same with $F_{k}^{W}$ instead of $F_{k}$, and it is omitted.
 For $k\in \mathbb{Z} ^{d}$, let $\bar F_{k}=F_{k}(\eta )-\E F_{k}(\eta )$. We have
 $
\notag\E  {(F_{W}-\E F_{W})}^{4}= {\sum_{i,j,k,l\in W}\E \bar {F_{i}}\bar {F_{j}}\bar {F_{k}}\bar {F_{l}}}.
 $
Let $I=\{i,j,k,l\}\subset  W$.  Assume that $i$ is {\it $I$-isolated}, i.e.  $\delta :=[d(i,I\setminus\{i\})]=\max_{m\in I}[d(m,I\setminus \{m\})]$ (let this quantity be $0$ if $i=j=k=l$). Let $  \eta_{m} ,m\in I,$ be  independent copies of $\eta $, and 
$H_{m}=  B_{ \delta /2a_{+}}(m)$,$
\eta' _{m}= (\eta \cap  H_{m} )\cup (  \eta_{m} \cap H_{m}^{c}).$
 Note that $\eta '_{m}$ is distributed as $\eta $,  and that for $m\in I\setminus\{i\},$ $H_{i}\cap H_{m}=\emptyset $, hence $\eta'_{i} $ is independant from    $\{ \eta '_{j},\eta' _{k},\eta' _{l}\}$. Introduce  $\bar F_{m}'=F_{m}(\eta _{m}')-\E F_{m},\bar F=\bar F_{j}\bar F_{k}\bar F_{l},\bar F'=\bar F_{j}'\bar F_{k}'\bar F_{l}'$, independent of $\bar F_{i}'$. We have, using Holder's inequality,
  \begin{align*}
\left|
\E\bar F_{i}  \bar F-\underbrace{\E \bar F'_{i}\bar  F' }_{=0}
\right|\leqslant &\E \Big[
 | (\bar F_{i}-\bar F'_{i} )  \bar F_{j}\bar F_{k}\bar F_{l} | +  | \bar  F_{i}'(\bar F_{j}-\bar F_{j}')\bar F_{k}\bar F_{l} |\\& + |  \bar F'_{i}\bar F_{j}'(\bar F_{k}-\bar F_{k}')\bar F_{l} | + | \bar F'_{i}\bar F'_{j}\bar F'_{k}(\bar F_{l}-\bar F'_{l}) | 
\Big]\\
\leqslant&  4\sum_{m\in I }(\E\bar   F_{0}^{4})^{3/4}(\E | \bar F_{m}-\bar F'_{m} | ^{4}
)^{1/4}\\
\leqslant &\kappa C_{0}(\E \bar F_{0}^{4})^{3/4}(1+\delta )^{-\alpha }
\end{align*}
by \eqref{ass:upper-variance} (or \eqref{ass:upper-variance-2} for the proof with the $F_{k}^{W}$).
 Notice that  one point among $\{j,k,l\}$ is between distance $\delta $ and $\delta +1$ from $i$, call it $a$, and there are at most $\kappa \delta ^{d-1}$ possible values for $a$, given $i$. If there are two points remaining in $\{j,k,l\}\setminus a$, they are at mutual distance at most $3\delta $.  We have 
\begin{align*}
\E ( F_{W}-\E\bar F_{W})^{4}\leqslant &4\sum_{i,j,k,l\in W} \mathbf{1}_{\{i\text{\rm{ isolated}}\}}\kappa C_{0}(\E\bar  F_{0}^{4})^{3/4}(1+[d(i,\{j,k,l\})])^{ -\alpha }\\
\leqslant &\kappa C_{0}(\E\bar  F_{0}^{4})^{3/4}\sum_{\delta =0}^{\infty }(1+\delta )^{ -\alpha }\sum_{i,j,k,l\in W}\mathbf{1}_{\{i\text{\rm{ isolated and }}[d(i,\{j,k,l\})]=\delta \}}\\
\leqslant &\kappa C_{0}(\E\bar  F_{0}^{4})^{3/4}\sum_{\delta =0}^{\infty } | W | ^{2}(1+\delta )^{ -\alpha }\kappa\delta  ^{d-1}(3\delta ) ^{d}\leqslant   \kappa C_{0}(\E\bar  F_{0}^{4})^{3/4} | W | ^{2}
\end{align*}where $\kappa <\infty $ because $\alpha >2d.$

\subsection{Proof of Theorem \ref{thm:BE}}
    \label{sec:BE} 
 $W$ is fixed. For simplicity,  in all the proof we use the notation $G=G_{W},\tilde G=\tilde G_{W}$. If \eqref{eq:moment-derivative} is satisfied, put $G_{k}=F_{k}$ and $A=\mathbb{R}^{d}$. If  instead \eqref{eq:moment-derivative-2}   is satisfied, put $G_{k}=F_{k}^{W} $ and $A=\tilde W$.     
Assume without loss of generality that $F_{0}$  is centered.  
 Theorem 1.2 from \cite{LPS} gives general Berry-Esseen bounds on the Poisson functional $\tilde G $ : provided
 $
\int_{A}\E( D_{\x}G) ^{2}d\x<\infty 
 $
(implied here by Assumption  \eqref{eq:moment-derivative}  or \eqref{eq:moment-derivative-2} and $\alpha >d$),  
 $d_\W  (\tilde G,N)\leqslant \sum_{i=1}^{3}\gamma _{i},\;$$d_\K  (\tilde G,N)\leqslant \sum_{i=1}^{6}\gamma _{i},$ where the $\gamma _{i}$ are quantities depending on the first and second-order Malliavin derivatives of $\tilde G$, whose values are recalled later.
 Let $x,y\in A,\x=(x,M),\y=(y,M')$.
 Call $\eta ^{\x}=\eta \cup \{\x\},\eta ^{\y}=\eta \cup \{\y\}$. We have, using H\"older's inequality at the last line,
\begin{align*}
\left|
D_{\x,\y}G (\eta )
\right|   \leqslant & \sum_{k\in W}\min\left(
 | D_{\x}G _{k}(\eta )| + | D_{\x}G_{k}(\eta^{\y}) |  
,
 | D_{\y}G _{k}(\eta )| + | D_{\y}G_{k} (\eta ^{\x} )| 
\right) . \\
\E  | D^2_{\x,\y}G (\eta ) | ^{4}\leqslant & \E\left|2
\sum_{k\in W}\min\left(
\sup_{\eta'  \in \{\eta ,\eta^{\y}\} }\left|
D_{\x}G_{k}(\eta'   )
\right|,\sup_{ \eta'  \in \{\eta ,\eta ^{\x}\} }\left|
D_{\y}G_{k}(\eta'   )
\right|
\right)
\right|^{4}\\
\leqslant &2^{4}\sum_{k_{1},\dots ,k_{4}\in W} \E \prod_{i=1}^{4}\min\left(
\sup_{\eta' \in \{\eta ,\eta^{\y}\} } | D_{\x}G_{k_{i}}(\eta'   ) | ,\sup_{\eta' \in \{\eta ,\eta ^{\x}\} } | D_{\y}G_{k_{i}}(\eta'   ) | 
\right)\\
\leqslant &2^{4} \left(
\sum_{k\in W}\left(
 \E  \min\left(
\sup_{\eta' \in \{\eta ,\eta^{\y}\}} | D_{\x}G_{k}(\eta'   ) | ^{4},\sup_{\eta'  \in \{\eta ,\eta ^{\x}\} } | D_{\y}G_{k}(\eta'   ) | ^{4}
\right)
\right)^{1/4}
\right)^{4}.
\end{align*}
Let $k\in W$.  Note that, with $B=\tilde W-k$, for $x \in \tilde W,y\in \mathbb{R}^{d}, $
\begin{align*}
D_{\x}F^{W}_{k}(\eta ')=&F^{W}_{k}(\eta '\cup \{\x\})-F^{W}_{k}(\eta ')=F_{0}((\eta '\cup \{\x\})\cap \tilde W-k)-F_{0}(\eta '\cap \tilde W-k)\\
=&F_{0}(((\eta '-k)\cap B)\cup \{\x-k\})-F_{0}((\eta '-k)\cap B)=D_{\x-k}F_{0}((\eta '-k)\cap B).
\end{align*}
Since $\eta  -k\equlaw \eta $, applying either  \eqref{eq:moment-derivative} or \eqref{eq:moment-derivative-2} with $y-k$ instead of $y$ yields
\begin{align*}
\E  | D^{2}_{\x,\y}G (\eta ) | ^{4}\leqslant &\kappa C_{0}^{4}\left(
\sum_{k\in W}\min((1+\|x-k\|)^{-\alpha },(1+\|y-k\|)^{-\alpha })
\right)^{4}.
\end{align*}
Consider case {\bf (i')} (the following is valid but irrelevant in case {\bf (i)}). Summing in a radial manner around $x$ yields that the previous sum is bounded by $\kappa C_{0}^{4}(\sum_{m=[d(x,W)]}^{\infty }m^{d-1}(1+m)^{-\alpha })^{4}\leqslant  \kappa C_{0}^{4}(1+d(x,W))^{4(d-\alpha) }$, and the same holds for $y$. We can also work on the first order derivative with a similar technique:
\begin{align*}
\E  | D_{\x}G | ^{4}\leqslant \kappa C_{0}^{4}\left(
\sum_{k\in W}(1+\|x-k\|)^{-\alpha }
\right)^{4}\leqslant \kappa C_{0}^{4}(1+d(x,W))^{4(d-\alpha )}.
\end{align*}
Noting $I_{x,y}=\{k\in W:\|k-x\|\geqslant \|k-y\|\}$,  
\begin{align*}
\E  | D_{\x,\y}G (\eta ) | ^{4}\leqslant &\kappa C_{0}^{4} \left[
  \left(
\sum_{k\in I_{x,y}}(1+\|x-k\|)^{-\alpha } 
\right)^{4}+ \left(
\sum_{k\in I_{y,x}}(1+\|y-k\|)^{-\alpha } 
\right)^{4}
\right]\\
\leqslant &\kappa C_{0}^{4}\left[
\sum_{k\in \mathbb{Z} ^{d}\setminus B(x,\|y-x\|/2)}(1+\|x-k\|)^{-\alpha }+\sum_{k\in \mathbb{Z} ^{d}\setminus B(y,\|x-y\|/2) }(1+\|y-k\|)^{-\alpha }
\right]^{4}\\
\leqslant &C_{0}^{4}\kappa (1+\|x-y\|/2)^{4(d-\alpha) },
\end{align*}
whence finally
 
\begin{align}
\label{eq:bound-D2}
\E | D_{\x,\y}G (\eta ) | ^{4}\leqslant  & \kappa C_{0}^{4}(1+\max(\|x-y\|,d(x,W),d(y,W)))^{4(d-\alpha) }.
%\\
%\leqslant &\kappa (1+d(x,W))^{4(d-\alpha )}(1+\|x-y\|)^{4(d-\alpha )}.
\end{align}
Let us start with a few geometric estimates, useful in the case {\bf (i')}.

\begin{lemma}
Let $W\subset \mathbb{Z} ^{d},$ bounded and non-empty, $d_{W}=(| W | / | \partialZ W | )^{1/d},  W'=\{k\in \mathbb{Z} ^{d}:d(k, W)\leqslant d_{W}\}$. We have
\begin{align}
\label{eq:estimW1} | W' | \leqslant &\kappa  | W | \\
\label{eq:estimW3}\int_{(\tilde W')^{c}}(1+d(x,\tilde W))^{a }dx\leqslant & 
  \kappa _{a} |   W |d_{W}^{a },\;a< -d \\
  \label{eq:estimW2}I(x):=\int_{\mathbb{R}^{d}}(1+\max(d(x ,W),\|x -y\|))^{d-\alpha }dy\leqslant &\kappa (1+d(x ,W))^{2d-\alpha },x\in \mathbb{R}^{d}.
\end{align}

\end{lemma}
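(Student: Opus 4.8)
The plan is to prove the three estimates separately, all by reducing the continuous quantities to lattice sums and controlling those by peeling off integer shells around the boundary $\partialZ W$. Throughout I would use that $d_{W}\geqslant 1$ (since $| W | \geqslant | \partialZ W |$, as $W$ contains its own boundary), that a Euclidean shell $\{z:\|z\|\in[t,t+1)\}$ carries at most $\kappa(1+t)^{d-1}$ lattice points, and that $d_{W}^{d}=| W |/| \partialZ W |$ by definition.

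For \eqref{eq:estimW1} I would first observe that any $k\in W'\setminus W$ has its nearest point of $W$ on the boundary: if $k'\in W$ is nearest to $k\notin W$, moving $k'$ one step towards $k$ in a coordinate where they differ strictly decreases the distance, so that neighbour cannot lie in $W$, forcing $k'\in\partialZ W$. Hence $d(k,\partialZ W)\leqslant d(k,W)\leqslant d_{W}$, so $W'\setminus W$ lies in the $d_{W}$-neighbourhood of $\partialZ W$. Covering that neighbourhood by the balls $B(b,d_{W})$, $b\in\partialZ W$, each carrying at most $\kappa d_{W}^{d}$ lattice points, gives $| W'\setminus W | \leqslant \kappa d_{W}^{d}| \partialZ W | =\kappa| W |$, and adding $| W |$ yields \eqref{eq:estimW1}.

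For \eqref{eq:estimW3} I would discretise: every $x\in(\tilde W')^{c}$ lies in a cell $j+\tilde Q_1$ with $j\notin W'$, where $d(j,W)>d_{W}$ and $d(x,\tilde W)\geqslant d(j,W)-\sqrt d$; since $a<0$ and $\ell^d(\tilde Q_1)=1$, it suffices to bound $\sum_{j\notin W'}(1+d(j,W))^{a}$ up to a factor $\kappa$ absorbing the $\sqrt d$ shift. Each exterior $j$ has a (tie-broken) nearest boundary point $\pi(j)\in\partialZ W$ with $d(j,W)=\|j-\pi(j)\|$, so grouping the $j$ by the value of $\pi(j)$ and then by the shell $\|j-\pi(j)\|\in[t,t+1)$ gives
\begin{align*}
\sum_{j\notin W'}(1+d(j,W))^{a}\leqslant | \partialZ W | \sum_{t\geqslant d_{W}}\kappa(1+t)^{d-1+a}\leqslant \kappa_{a}| \partialZ W | d_{W}^{d+a},
\end{align*}
the tail sum converging and being of order $d_{W}^{d+a}$ precisely because $a<-d$. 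As $| \partialZ W | d_{W}^{d+a}=| \partialZ W | d_{W}^{d}d_{W}^{a}=| W | d_{W}^{a}$, this is \eqref{eq:estimW3}. For \eqref{eq:estimW2}, writing $s=d(x,W)$ I would split the $y$-integral at $\|x-y\|=s$: on $B(x,s)$ the maximum equals $s$, contributing at most $\ell^d(B(x,s))(1+s)^{d-\alpha}\leqslant\kappa(1+s)^{2d-\alpha}$, while on the complement the maximum is $\|x-y\|$ and, in polar coordinates,
\begin{align*}
\int_{\|x-y\|>s}(1+\|x-y\|)^{d-\alpha}\,dy\leqslant\kappa\int_{s}^{\infty}(1+r)^{2d-1-\alpha}\,dr\leqslant\kappa(1+s)^{2d-\alpha},
\end{align*}
the integral converging because $\alpha>2d$ (indeed $\alpha>5d/2$ in case {\bf (i')}, the only regime where this lemma is invoked). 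Summing the two contributions gives \eqref{eq:estimW2}.

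The only genuinely delicate point is the projection–shell argument behind \eqref{eq:estimW3}: one must check that assigning to each exterior lattice point its nearest boundary point partitions the exterior (so no over-counting occurs when summing over $b\in\partialZ W$), and that the per-boundary-point contribution genuinely decays like $d_{W}^{d+a}$ rather than accumulating — which is exactly where the hypothesis $a<-d$ enters. The remaining two estimates are routine once the elementary lattice-counting facts above are in place.
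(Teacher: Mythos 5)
Your proofs of \eqref{eq:estimW1} and \eqref{eq:estimW2} coincide with the paper's: the first via covering the $d_W$-neighbourhood of $\partialZ W$ by balls of radius $d_W$ (you add the small but correct observation that the nearest lattice point of $W$ to an exterior point must lie in $\partialZ W$, which the paper takes for granted), and the third via the same split of the $y$-integral at $\|x-y\|=d(x,W)$. For \eqref{eq:estimW3}, however, you take a genuinely different route. The paper works directly in the continuum: it applies the Federer co-area formula to $\psi(x)=d(x,\tilde W)$, uses $\|\nabla\psi\|=1$ a.e.\ outside $\tilde W$, and invokes a cited result (Rataj--Winter, Lemma 4.1) giving $\mathcal{H}^{d-1}(\psi^{-1}(\{t\}))\leqslant t^{d-1}\mathcal{H}^{d-1}(\psi^{-1}(\{1\}))\leqslant \kappa t^{d-1}|\partialZ W|$, after which the one-dimensional integral $\int_{d_W}^{\infty}(1+t)^{a}t^{d-1}\,dt$ gives the claim. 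You instead discretise to a lattice sum and control it by assigning each exterior lattice point to its nearest boundary point and counting shells; since the projection is a genuine function, the grouping partitions the exterior and each boundary point contributes at most $\kappa\sum_{t\geqslant d_W}(1+t)^{d-1+a}\leqslant\kappa_a d_W^{d+a}$, which multiplied by $|\partialZ W|=|W|d_W^{-d}$ gives the same bound. Both arguments are correct and hinge on the same convergence condition $a<-d$. Your version is self-contained and elementary, avoiding the external citation and the measure-theoretic machinery; the paper's version is shorter once the level-set bound is granted and extends without change to non-lattice windows. The only point you gloss over --- absorbing the $\sqrt d$ shift between $d(x,\tilde W)$ and $d(j,W)$, and the finitely many shells with $t\leqslant\sqrt d$ when $d_W$ is small --- is routine and costs only a constant $\kappa_a$, at the same level of informality as the paper itself.
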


\begin{proof}Since each point of $W'\setminus W$ is in a ball with radius $d_{W}$ centered in $\partialZ W$, \eqref{eq:estimW1} is proved via 
\begin{align*}
 | W' | \leqslant  | W | + | \partialZ W | \kappa d_{W}^{d}\leqslant \kappa  | W | .
\end{align*}
Let $\psi (x)=d(x,\tilde W),x\in \mathbb{R}^{d},h(t)=\mathbf{1}_{\{t\geqslant d_{W}\}}(1+t)^{a },t\geqslant 0$. The Federer co-area formula yields
\begin{align*}
\int_{\mathbb{R}^{d}}h(\psi (x))\|\nabla \psi (x)\|dx=\int_{\mathbb{R}_{+}}h(t)\mathcal{H}^{d-1}(\psi ^{-1}(\{t\}))dt.
\end{align*}We have  $\|\nabla \psi (x)\|=1$ for a.a. $x\in \tilde W^{c}$.   According to \cite[Lemma	4.1]{RatWin}, for almost all $t>0,$
\begin{align*}
\mathcal{H}^{d-1}(\psi ^{-1}(\{t\}))\leqslant t^{d-1}\mathcal{H}^{d-1}(\psi ^{-1}(\{1\})),
\end{align*}and the latter is bounded by $\kappa t^{d-1} | \partialZ W | $. Since $a+d<0$

\begin{align*}
\int_{(\tilde W')^{c}}h(\psi (x))dx\leqslant  \kappa \int_{d_{W}}^{\infty }(1+t)^{a }t^{d-1} | \partialZ W | dt\leqslant \kappa_{a} | \partialZ W | d_{W}^{a}d_{W}^{d} =\kappa_{a}  | W | d_{W}^{a},
\end{align*}which yields \eqref{eq:estimW3}. The left hand member of \eqref{eq:estimW2} is equal to 
\begin{align*}
I(x)=&\ell^d(B(x,d(x,W)))(1+\max(d(x,W)))^{d-\alpha }+\int_{B(x,d(x,W))^{c}}(1+\|x-y\|)^{d-\alpha }dy\\
\leqslant& \kappa (1+d(x,W))^{2d-\alpha }+\int_{d(x,W)}^{\infty }(1+r)^{d-\alpha }\kappa r^{d-1}dr,
\end{align*}from which the result follows.
\end{proof}

  Writing $\x_{1}=(x_{1},M_{1}),\x_{2}=(x_{2},M_{2}),\x_{3}=(x_{3},M_{3})$, with $M_{1},M_{2},M_{3}$ iid distributed as $\mu $, denote by $\tilde \E$ the expectation with respect to $(M_{1},M_{2},M_{3})$, and $\E_{\eta }$ the expectation with respect to  $\eta $, such that $\E=\tilde\E\E_{\eta }$.  We have, bounding $\E D_{\x_{1}}^{4}G$ by 
  $\kappa C_{0}^{4}$ and using Cauchy-Scwharz inequality several times,
\begin{align*}
\gamma _{1}=&4\sigma ^{-2}\left[
\int_{ A^{3}}\tilde\E\left[
\sqrt{\E_{\eta }\left[
(D_{\x_{1}}G)^{2}(D_{\x_{2}}G)^{2}
\right]}\sqrt{\E_{\eta }\left[
(D^{2}_{\x_{1},\x_{3}}G)^{2}(D^{2}_{\x_{2},\x_{3}}G)^{2}
\right]}
\right]dx_{1}dx_{2}dx_{3}
\right]^{1/2}\\
\leqslant &4\sigma ^{-2} \left[
\int_{ A^{3}}\sqrt{\tilde\E\left[
\E_{\eta }\left[
(D_{\x_{1}}G)^{2}(D_{\x_{2}}G)^{2}
 \right]
\right]}\sqrt{\tilde\E\left[
\E_{\eta }\left[
(D^{2}_{\x_{1},\x_{3}}G)^{2}(D^{2}_{\x_{2},\x_{3}}G)^{2}
\right]  
\right]}dx_{1}dx_{2}dx_{3}
\right]^{1/2}\\
\leqslant &\kappa C_{0}\sigma ^{-2}\left[
\int_{A^{3}} \left(
\E (D^{2}_{\x_{1},\x_{3}}G)^{4}
\right)^{1/4}\left(
\E (D^{2}_{\x_{2},\x_{3}}G)^{4}
\right)^{1/4}dx_{1}dx_{2}dx_{3}
\right]^{1/2}\\
\leqslant &\kappa C_{0}^{2}\sigma ^{-2}\sqrt{\int_{A}\left
(\int_{A}(1+\max(d(x,\tilde W),\|x-x_{3}\|)))^{d-\alpha }dx
\right)^{2}dx_{3}}
\end{align*}
using \eqref{eq:bound-D2}.  Similar techniques to integrate out the marks yield the same bound
\begin{align*}
\gamma _{2} 
\leqslant &\kappa C_{0}\sigma ^{-2}\left[
\int_{A^{3}} \left(
\E (D^{2}_{\x_{1},\x_{3}}G)^{4}
\right)^{1/4}\left(
\E (D^{2}_{\x_{2},\x_{3}}G)^{4}
\right)^{1/4}dx_{1}dx_{2}dx_{3}\right]^{1/2}\\ 
\leqslant &\kappa C_{0}^{2}\sigma ^{-2}\sqrt{\int_{A}\left
(\int_{A}(1+\max(d(x,\tilde W),\|x-x_{3}\|)))^{d-\alpha }dx
\right)^{2}dx_{3}}
\end{align*}
In the case {\bf (i)}, $A=\tilde W$ and $\alpha >2d$. We have 
\begin{align*}
\max(\gamma _{1},\gamma _{2})\leqslant \kappa C_{0}^{2}\sigma ^{-2}\sqrt{\ell^d(\tilde W)\left(
\int_{\mathbb{R}^{d}}(1+\|  x\|)^{d-\alpha }dx
\right)^{2}}\leqslant \kappa C_{0}^{2}\sigma ^{-2} \sqrt{ | W | }.
\end{align*}In the case ${\bf (i')}, A=\mathbb{R}^{d}, \alpha >5d/2$.
Using successively \eqref{eq:estimW2},\eqref{eq:estimW1} and \eqref{eq:estimW3} yield, with $2(2d-\alpha)<2(-d/2)=-d, $
\begin{align*}
\max(\gamma _{1},\gamma _{2})\leqslant &\kappa C_{0}^{2}\sigma ^{-2}\sqrt{\int_{A}I(x_{3})^{2}dx}
\leqslant\kappa C_{0}^{2}\sigma ^{-2}\sqrt{\kappa  \ell^d(\tilde W')+\int_{(\tilde W')^{c}}(1+d(x,W))^{2(2d-\alpha )}dx}\\
\leqslant &\kappa C_{0}^{2}\sigma ^{-2}\sqrt{\kappa  | W | +\kappa _{2(2d-\alpha )} | W | d_{W}^{2(2d-\alpha )}}\leqslant \kappa C_{0}^{2}\sigma ^{-2} \sqrt{ | W | }(1+d_{W}^{2(2d-\alpha )}),
\end{align*}
which gives the power $a$ in \eqref{eq:BEbound-W-2}-\eqref{eq:BEbound-K}.
Let us keep assuming we are in case {\bf (i')}. Since $A=\mathbb{R}^{d}$ and $\alpha >2d$, \eqref{eq:estimW3} yields
\begin{align*}
\gamma _{3}\leqslant& \sigma ^{-3} \int_{\mathbb{R}^{d}}\left(
C_{0}^{4}\kappa (1+d(x,W))^{4(d-\alpha )}
\right)^{3/4} dx 
\leqslant & C_{0}^{3}\kappa  \sigma ^{-3}  \left(
 \ell^d(\tilde W') +\int_{(\tilde W')^{c}}(1+d(x,W))^{3(d-\alpha )} dx
\right)\\
\leqslant& \kappa  C_{0}^{3}\sigma ^{-3}| W |(1+d_{W}^{3(d-\alpha )}) .\end{align*}
In case {\bf (i)}, the same bound holds after removing $d_{W}^{3(d-\alpha )}$. Reporting back gives \eqref{eq:BEbound-W-2}. 

Introduce $\overline{G }=G -\E G $. Using \eqref{eq:estimW1} and \eqref{eq:estimW3},
\begin{align*}
\gamma _{4}\leqslant &\frac 12\sigma ^{-1}(\E  \overline{G }^{4})^{1/4}\int_{\mathbb{R}^{d}}\sigma ^{-3}\left(
C_{0}^{4}\kappa (1+d(x,W))^{4(d-\alpha )}
\right)^{3/4}dx\\
\leqslant &\kappa \sigma ^{-4}v^{1/4}\sqrt{ | W |} C_{0}^{3} \left(
\ell^d(\tilde W')+\int_{(\tilde W')^{c}}(1+d(x,W))^{3(d-\alpha )} dx
\right)\\
\leqslant & \sigma ^{-4}C_{0}^{3} \kappa | W |^{3/2} v^{1/4}(1+d_{W}^{3(d-\alpha )})
\end{align*}
where
 $
v:=\sup_{ | W | \to \infty }\frac{\E((G -\E G )^{4})}{ | W | ^{2}}
 $. Let us conclude the proof: \eqref{eq:estimW3} yields
\begin{align*}
\gamma _{5}\leqslant &\left[
\int_{\mathbb{R}^{d}}\sigma ^{-4}C_{0}^{4}\kappa (1+d(x,W))^{4(d-\alpha )}dx
\right]^{1/2}\leqslant \sigma ^{-2}C_{0}^{2}\kappa \sqrt{ | W | }\left(
1+d_{W}^{4(d-\alpha )}
\right)^{1/2}\\
\gamma _{6}\leqslant &\Big[
\int_{(\mathbb{R}^{d})^{2}}\sigma ^{-4}\Big(
6C_{0}^{4}\kappa (1+d(x_{1},W))^{2(d-\alpha )}(1+ \|x_{1}-x_{2}\|)^{2(d-\alpha )}\\
&\hspace{4cm}+3C_{0}^{4}\kappa (1+d(x_{1},W))^{2(d-\alpha )}(1+ \|x_{1}-x_{2}\|)^{2(d-\alpha )}
\Big)dx_{1}dx_{2} 
\Big]^{1/2}\\
\leqslant &\sigma ^{-2}C_{0}^{2}\kappa \left[
\int_{\mathbb{R}^{d}}(1+d(x_{1},W))^{2(d-\alpha )}\left(
\int_{\mathbb{R}^{d}}(1+\|x_{1}-x_{2}\|)^{2(d-\alpha )}dx_{2}
\right)dx_{1}\right]^{1/2}\\
\leqslant &\sigma ^{-2}C_{0}^{2}\kappa  \sqrt{ | W | }\left(
1+d_{W}^{2(d-\alpha )}
\right)^{1/2}.
\end{align*}
  In case {\bf (i)}, $A=\tilde W$, all the same inequalities still hold after removing terms of the form $d_{W}^{a}$.
Reporting back gives \eqref{eq:BEbound-K}.

\subsection*{Acknowledgements}

 I am thankful to J. E. Yukich, who brought valuable insights on topics related to asymptotic normality of geometric functionals.
  \bibliographystyle{plain}
 \bibliography{../../../../../../../../bibi2bis}
 %  \bibliography{/Users/rlachiez/Seafile/bibliotek/recherche/bibi2bis}
 %\bibliography{/Users/raphael/Seafile/bibliotek/recherche/bibi2bis}

  \end{document}